\documentclass[reqno]{amsart}
\usepackage{mathrsfs}
\usepackage{amsmath,amsfonts,amssymb,amsthm}
\usepackage{upgreek}
\usepackage[usenames,dvipsnames]{xcolor}
\usepackage{enumitem}
\usepackage{graphicx}
\usepackage{overpic}
\usepackage{cancel}
\usepackage[outdir=./]{epstopdf}
\usepackage[colorlinks=true]{hyperref} 
\usepackage{url}
\usepackage[utf8]{inputenc}
\usepackage{booktabs,multirow}
\usepackage[font=footnotesize,width=\textwidth]{caption}
\hypersetup{linkcolor=Violet,citecolor=PineGreen}
\newtheorem{teor}{Theorem}[section]
\newtheorem{lema}[teor]{Lemma}
\newtheorem{prop}[teor]{Proposition}
\newtheorem{coro}[teor]{Corollary}
\theoremstyle{definition}
\newtheorem{defi}[teor]{Definition}
\newtheorem{exa}[teor]{Example}

\newtheorem{nota}[teor]{Remark}
\newtheorem{notas}[teor]{Remarks}
\numberwithin{equation}{section}

\newcommand{\R}{\mathbb R}
\newcommand{\To}{\mathbb T}

\newcommand{\Z}{\mathbb{Z}}
\newcommand{\Q}{\mathbb{Q}}
\newcommand{\C}{\mathbb{C}}
\newcommand{\N}{\mathbb{N}}

\newcommand{\mA}{\mathcal{A}}
\newcommand{\mB}{\mathcal{B}}
\newcommand{\mC}{\mathcal{C}}

\newcommand{\mK}{\mathcal{K}}
\newcommand{\mM}{\mathcal{M}}

\newcommand{\mR}{\mathcal{R}}
\newcommand{\mS}{\mathcal{S}}

\newcommand{\mV}{\mathcal{V}}
\newcommand{\ep}{\varepsilon}

\newcommand{\mI}{\mathcal{I}}
\newcommand{\mJ}{\mathcal{J}}
\newcommand{\mL}{\mathcal{L}}

\newcommand{\pr}{P^1(\R)}

\newcommand{\W}{\Omega}
\newcommand{\WR}{{\W\times\R}}
\newcommand{\RR}{\R\times\R}
\newcommand{\w}{\omega}
\newcommand{\lb}{\lambda}

\newcommand{\ma}{\mathfrak{a}}
\newcommand{\mb}{\mathfrak{b}}
\newcommand{\mc}{\mathfrak{c}}
\newcommand{\md}{\mathfrak{d}}

\newcommand{\ml}{\mathfrak{l}}
\newcommand{\mm}{\mathfrak{m}}
\newcommand{\mn}{\mathfrak{n}}
\newcommand{\muk}{\mathfrak{u}}
\newcommand{\mf}{\mathfrak{f}}
\newcommand{\mg}{\mathfrak{g}}
\newcommand{\mh}{\mathfrak{h}}

\newcommand{\tml}{\tilde\ml}

\newcommand{\tmuk}{\tilde\muk}

\newcommand{\upalfa}{$\upalpha$}
\newcommand{\upomeg}{$\upomega$}

\newcommand{\G}{\Gamma}

\newcommand{\ws}{\w{\cdot}s}
\newcommand{\wt}{\w{\cdot}t}

\newcommand{\bwt}{\bar\w{\cdot}t}

\newcommand{\lsm}{\left[\begin{smallmatrix}}
\newcommand{\rsm}{\end{smallmatrix}\right]}

\newcommand{\merg}{\mathfrak{M}_\mathrm{erg}(\W,\sigma)}
\DeclareMathOperator{\arccot}{arccot}

\begin{document}
\title[Loss-of-hyperbolicity and jump bifurcations]
{Loss-of-hyperbolicity and jump bifurcations in scalar nonautonomous d-concave ODEs}
\author[J. Due\~{n}as]{Jes\'{u}s Due\~{n}as}
\author[C. N\'{u}\~{n}ez]{Carmen N\'{u}\~{n}ez\textsuperscript{*}}
\author[R. Obaya]{Rafael Obaya}
\address{{\rm(J. Due\~{n}as)}.  Departamento de Matem\'{a}tica Aplicada,
Escuela de Ingenier\'{\i}a Infor\-m\'{a}\-tica, Universidad de Valladolid, Paseo de Bel\'{e}n, 15, 47011 Valladolid, Spain.
}
\address{{\rm(C.N\'{u}\~{n}ez, R. Obaya)}. Departamento de Matem\'{a}tica Aplicada, Escuela de Ingenier\'{\i}as
Industriales, Universidad de Valladolid, Paseo Prado de la Magdalena 3-5, 47011 Va\-lla\-dolid, Spain.}
\address{{}The three authors are members of IMUVA: Instituto de Investigaci\'{o}n en Matem\'{a}ticas,
Universidad de Valladolid.}
\email[J.~Due\~{n}as]{jesus.duenas@uva.es}
\email[C.~N\'{u}\~{n}ez]{carmen.nunez@uva.es}
\email[R.~Obaya]{rafael.obaya@uva.es}
\thanks{All the authors were supported by Ministerio de Ciencia, Innovaci\'{o}n y
Universidades (Spain) under project PID2024-156691NB-I00, and by Department of Education of the Junta de Castilla y Le\'{o}n (Spain) and FEDER Funds under project CLU-2025-1-02-IMUVA.}
\thanks{\textsuperscript{*}Corresponding author.}
\begin{abstract}
The paper studies bifurcation for additively parametrized scalar nonautonomous ODEs of the
form $x'=f(t,x)+\lb$, where $f$ is d-concave and coercive in the state variable.
By formulating the problem through the hull of $f$ and the associated skewproduct flow,
the autonomous notion of bifurcation can be extended in terms of loss of hyperbolicity
of copies of the base. A fairly complete classification of the possible bifurcation diagrams
is provided. The notion of a jump bifurcation, linked to abrupt changes in the global attractor,
is introduced and related to the occurrence of critical transitions. Several nontrivial, genuinely
nonautonomous examples are presented, illustrating the different bifurcation diagrams that may arise
from the lack of unique ergodicity on the hull and showing how critical transitions can result from an
underlying jump bifurcation point. The framework has potential applications in climate dynamics,
ecology, circuits, optics, biology, and other models involving abrupt transitions.
\end{abstract}
\keywords{Nonautonomous dynamical systems; d-concave
scalar ODEs; nonautonomous bifurcation; critical transitions}
\subjclass{37B55, 37G35, 34C23, 34D45}.
\renewcommand{\subjclassname}{\textup{2020} Mathematics Subject Classification}

\maketitle
\section{Introduction}
Nonautonomous bifurcation theory is a young branch of mathematics that is attracting
growing interest within the scientific community. In fact, most of the significant advances
in this field have occurred in the 21st century.
Even in the scalar case, formulating an appropriate notion of bifurcation for nonautonomous ODEs is
not straightforward. In a scalar autonomous problem, bifurcation theory usually follows equilibria.
A general nonautonomous equation, however, need not possess constant solutions, and no single
trajectory is necessarily distinguished from the others. One of the goals of this article
is to explore in greater depth the idea (already outlined in \cite{dno1,dno2,dno5}) that a change
in the number of uniformly separated hyperbolic solutions is an extension of the autonomous concept
that is perfectly suited to the time-recurrent context. This forms the basis of the concept of
{\em loss-of-hyperbolicity bifurcation}, which appears in the title of the article.

The recent interest in nonautonomous bifurcation theory is intrinsically linked to another burgeoning
branch of scientific analysis: the study of critical transitions.
In fact, vague descriptions of what constitutes a bifurcation point or what a critical transition
is are often difficult to distinguish: both concepts convey the idea of a significant change in
the overall dynamics of a given dynamical system due to a minuscule change in the law that governs
its evolution. Not all critical transitions studied can be explained by an underlying bifurcation
phenomenon, and not all such phenomena involve the occurrence of critical transitions.
Another purpose of this article is to examine this relationship in greater depth,
defining {\em jump bifurcations\/} as those suitable to give rise to a critical transition phenomenon.

We focus on one-parameter bifurcation problems of the type
\begin{equation}\label{eq:1process}
 x'=f(t,x)+\lb, \qquad\lb\in\R\,,
\end{equation}
where the scalar map $f$ is assumed to be coercive with respect to the state variable $x$.
It is also assumed that the derivative of $f$ with respect to $x$ is concave in a weak sense to be
specified below: $f$ is what we refer to as a {\em d-concave} function.

In addition, we will assume suitable regularity
and time-compactness conditions ensuring that the family of time translates of $f$ has compact closure.
More precisely, let $(f{\cdot}s)(t,x):=f(t+s,x)$ and let $\W:=\mathrm{cls}\{f{\cdot}s\mid\,s\in\R\}$ be
the {\em hull\/} of $f$ in the compact-open topology of $C(\R\times\R,\R)$.
Under our hypotheses, $\W$ is a compact metric space, and the time translation defines a continuous
flow $\sigma\colon\R\times\W\to\W$, $(t,\w)\mapsto\w{\cdot}t$.
By setting $\mf(\w,x):=\w(0,x)$, the problem \eqref{eq:1process} is embedded into the family
\begin{equation}\label{eq:1hull}
 x'=\mf(\w{\cdot}t,x)+\lb\,,\qquad\w\in\W,\quad\lb\in\R\,:
\end{equation}
the original equation corresponds to the distinguished element $\w=f$.
This formulation replaces the study of a single nonautonomous process with that of a skewproduct flow
$\tau_\lb$ on $\WR$, making the topological and ergodic methods of nonautonomous dynamical systems natural
tools for the analysis. The flow $\tau_\lb$ preserves the flow on the base $\W$,
and is given on the fiber by the solutions of the equations corresponding to each element of the hull.

Since $\W$ is the closure of the orbit of $f$, the base flow is {\em transitive\/} by construction: the orbit
of $f$ is dense in $\W$. The main results of this paper will be formulated for families \eqref{eq:1hull}
assuming transitivity of the flow on $\W$. This is fully consistent with the main
purpose of the hull framework, namely, the analysis of the variation with respect to $\lambda$
of the processes generated by a parametric family of time-dependent equations like \eqref{eq:1process}.

In this hull framework, uniformly separated hyperbolic solutions are replaced by disjoint
(and hence strictly ordered)
{\em hyperbolic copies of the base}. A copy of the base is the graph of a continuous map $\mc\colon\W\to\R$
such that $t\mapsto \mc(\wt)$ solves \eqref{eq:1hull} for all $\w\in\W$, and it is hyperbolic if
this graph is uniformly exponentially stable for $\tau_\lb$ either as time increases (when it is
attractive) or as time decreases (if it is repulsive).
The persistence of hyperbolicity guarantees that a hyperbolic solution for a given value
of the parameter $\lb$ is part of a branch of such solutions that varies with $\lambda$.
Roughly speaking, $\lb_0$ is said to be a {\em loss-of-hyperbolicity bifurcation point} if
it is the endpoint of an interval of parameter values corresponding to that branch.

The foundations of this work can be found in \cite{dno1,dno5}.
The present paper advances the previously developed theory with a more complete classification
of the different possibilities for the bifurcation diagrams. The conditions we assume guarantee the
existence of the global attractor $\mA_\lb$ for $\tau_\lb$ for all $\lb\in\R$, and that it reduces to a single
hyperbolic attractive copy of the base when $|\lb|$ is large enough. We call
$\lb_u^*\ge-\infty$ (resp.~$\lb_l^*\le\infty$) the lowest (resp.~highest) value of the parameter
for which the upper equilibrium $\muk_\lb$ (resp.~lower equilibrium $\ml_\lb$)
of $\mA_\lb$ is a hyperbolic copy of the base for all $\lb\in(\lb^*_u,\infty)$
(resp.~for all $\lb\in(-\infty,\lb^*_l)$). The
relative position of $\lb_l^*$ and $\lb_u^*$ gives rise to three different possibilities:
{\bf p1} if $\lb_u^*<\lb_l^*$, {\bf p2} if $\lb_u^*=\lb_l^*$ and
{\bf p3} if $\lb_u^*>\lb_l^*$. In {\bf p1} and {\bf p2}, we provide an exhaustive
description of all possible (loss-of-hyperbolicity) bifurcation diagrams
and the underlying dynamics for each value of $\lb$, whereas the number of possibilities
when $\lb_l^*<\lb_u^*$ makes such an exhaustive description an impossible task in case {\bf p3}.
We show that, even if $\W$ is minimal, the lack of ergodic uniqueness of its flow can give rise
to bifurcation patterns and dynamical scenarios that do not occur in autonomous, almost periodic
in time, or, in general, uniquely ergodic cases,
and provide explicit examples that give rise to some of these genuinely nonautonomous bifurcation patterns.

Nonautonomous d-concave scalar equations arise across a wide range of applied sciences.
Equations of this type, together with closely related nonautonomous bistable scalar models,
appear in climate dynamics \cite{BerglundGentz2002,Cessi1994,HankelTziperman2023}, ecological
control problems \cite{fmrh,WangChenZhengYu2024}, nonlinear electrical circuits
\cite{BaltanasEtAl2003,LuchinskyEtAl1999}, optical systems \cite{HohlEtAl1995,JungGrayRoy1990},
and biological population models \cite{LopesAmorGore2024,WackerSchluter2021}, among other areas
of scientific interest. Consequently, the results obtained in this paper may contribute to the
analysis of a broad range of problems in applied mathematics.

As already said, one of the motivations for this work comes from the mathematical theory
of critical transitions: abrupt and often difficult-to-reverse changes in the state of a
system that may be triggered by small or gradual variations in external conditions. They have been studied
in connection with climate change and large-scale climate-system transitions
\cite{AlkhayuonEtAl2019,awvc,Schellnhuber2009}, ecosystem collapse, ecological regime shifts,
and evolutionary rescue \cite{SchefferEtAl2009,VanselowEtAl2022}, biological regulation, cell-fate decisions,
and threshold phenomena in nerve excitation \cite{Hill1936,NeneZaikin2010}, and instability in economic and
financial systems \cite{MayLevinSugihara2008,YukalovEtAl2009}. This collection of works,
while by no means exhaustive, shows that critical transitions constitute a truly
multidisciplinary field of research.

A frequent framework for their mathematical study starts with a $\lambda$-family of ODEs
and replaces the parameter with a time-dependent function, a “parameter shift”
$\Lambda(t)$ \cite{alk2018,asne,apw2017,awvc,dno4,WieczorekXieAshwin2023}. This gives rise to the
{\em transition equation\/} to be analyzed. In many of these studies, as in
\cite{apw2017, WieczorekXieAshwin2023},
it is assumed that $\Lambda(t)$ is asymptotically constant,
and replacing $\Lambda(t)$ with its constant limits yields the {\em past\/} and
{\em future equations}.
The {\em frozen equations} are given by the equations
obtained by replacing $\Lambda(t)$ with each one of the values in the range of $\Lambda$, and
all of them are assumed, as in our case, to have a representative local attractor.
In this setting, a critical transition may appear when the solutions of the
transition equation connecting with that special local attractor of the past equation
stop tracking the special local attractor of the frozen equations.
More recent developments \cite{dno3,dno4,dno5} allow the limiting equations themselves to be
nonautonomous. In particular, in \cite{dno4,dno5}, the role of the past and future equations is played by
the families \eqref{eq:1hull} restricted to the elements $\w$ of the \upalfa-limit and \upomeg-limit
sets of $f$ in $\W$, respectively. Some d-concavity (or concavity) conditions assumed in a weak
(measure) sense just on the limit families permit relating the occurrence of critical transitions
with the occurrence of saddle-node bifurcation phenomena for the initial $\lambda$-family.
Previous and more restrictive approaches to this idea can be found in \cite{dno3,lno2,lnor}.

However, the bifurcation phenomena underlying critical transitions are not limited to the saddle-node type.
In this article, we refine the concept of the bifurcation point due to loss of hyperbolicity with that of
{\em jump bifurcation point\/} $\lb_0$, which adequately describes a certain abrupt change in the shape of the
global attractor $\mA_\lb$ when the parameter $\lb$ crosses $\lb_0$; or, more specifically, an abrupt change in
the upper equilibrium $\muk_\lb$ or in the lower equilibrium $\ml_\lb$ of $\mA_\lb$. Let us
briefly explain the connection
between this type of bifurcation and the emergence of critical transitions by focusing on the case of a
variation in the rate of change of the parameter: when the parameter shift takes the form
$\Lambda(\ep t)$ and $\ep$ is small,
the solutions to the transition equation track $\mA_\lb$, and some of them track the
local attractors determined by the graphs of $\muk_\lb$ and $\ml_\lb$. This behavior, which has been well
known for decades in the autonomous case thanks to the combination of the results of Tikhonov \cite{tikh}
and Fenichel \cite{feni}, has recently been demonstrated in the nonautonomous case under conditions that our
d-concave ordinary differential equations satisfy in \cite{loos}. Clearly, an abrupt variation in these
tracked elements necessarily involves a critical transition.

When the values $\lb_l^*$ and $\lb_u^*$ are real, they are loss-of-hyperbolicity bifurcation points,
but possibly not the only ones. On the contrary, under some fairly mild condition conditions,
if $\lb_0$ is a jump bifurcation point, then it coincides with one of these two (perhaps equal) special values.
In this work, we describe examples exhibiting jump bifurcation in the three cases {\bf p1}, {\bf p2} and
{\bf p3} mentioned above. The well-known cases of double saddle-node bifurcation for \eqref{eq:1hull}
provide examples of case {\bf p1} with two jump bifurcation points.
Regarding case {\bf p2}, we detail the example of a minimal compact base flow carrying two distinct ergodic
measures, based on the well-known Harper-type examples, and show how it gives rise
to a purely nonautonomous bifurcation diagram which in addition involves jump bifurcation.
An example of a cubic polynomial
equation with time-dependent coefficients fitting case {\bf p3} and a jump bifurcation
completes the analysis. The
complexity of these nonautonomous bifurcation diagrams is a direct consequence of the
complexity of the dynamics
corresponding to a transition equation giving rise to a hull on which several ergodic measures coexist.

We conclude this introduction with a brief outline of the paper. The basic concepts and results
used in the subsequent analysis are presented in Section \ref{sec:2}. Section \ref{sec:3} describes
the hypotheses imposed on our d-concave bifurcation problems and explains their motivation.
It also defines the notion of a loss-of-hyperbolicity bifurcation point and focuses on what is,
to the best of our knowledge, the best-known nontrivial bifurcation diagram in this setting:
the double saddle-node bifurcation diagram, which we refer to as {\em S-shaped}.
Section \ref{sec:4} begins with the aforementioned classification into cases {\bf p1},
{\bf p2}, and {\bf p3}. After presenting some common and case-specific properties of these situations,
and describing the possibilities for the bifurcation diagram in cases {\bf p1} and {\bf p2},
we give the definition of a jump bifurcation point and describe a critical transition,
corresponding to an S-shaped bifurcation diagram (and hence to case {\bf p1}) for
the family of frozen equations. Finally, Sections \ref{sec:5} and \ref{sec:6} provide detailed,
highly nontrivial examples of cases {\bf p3} and {\bf p2} with jump bifurcation points, and describe
related models in which these underlying bifurcations give rise to critical transitions.
\section{Preliminaries and some previous results}\label{sec:2}
The basic definitions and properties of maximal solutions of ordinary differential equations,
flows, orbits, invariant sets, \upalfa-limit and \upomeg-limit sets,
and invariant and ergodic (normalized positive Borel) measures can be found,
e.g., in \cite{cano}, \cite{dno1},
\cite{dno4}, and references therein. In this preliminary section,
we recall some less standard concepts, fundamental in the forthcoming analysis.

Let $\sigma\colon\R\times\W\to\W$, $(t,\w)\mapsto\sigma(t,\w)=:\w{\cdot}t$ be a global
continuous flow on a compact metric space $\W$.
We denote by $C^{0,2}(\WR,\R)$
the set of continuous functions $\mh\colon\WR\to\R$ for which the
first and second derivative with respect to the second
variable, $\mh_x$ and $\mh_{xx}$, exist and are continuous.
A map $\mh\in C^{0,2}(\WR,\R)$ provides a family of scalar nonautonomous differential equations
\begin{equation}\label{eq:2fam}
x'=\mh(\w{\cdot}t,x)\,,\quad\w\in\W\,,
\end{equation}
and we denote by $v(t,\w,x)$ the maximal solution of the equation corresponding to $\w$
given by $v(0,\w,x)=x$. So, $v(t+s,\w,x)=v(t,\ws,v(s,\w,x))$ whenever the right-hand term is defined,
and hence there exists a maximal open subset $\mV\subseteq\R\times\WR$ containing
$\{0\}\times\WR$ such that
\begin{equation}\label{def:2tau}
 \tau\colon\mV\to\WR\,,\;\;(t,\w,x)\mapsto (\wt,v(t,\w,x))
\end{equation}
defines a (possibly local) continuous flow on $\WR$, of {\em skewproduct} type.
This framework is a usual tool to analyze a single nonautonomous equation: the {\em hull construction},
briefly described in Section \ref{subsec:2hull},
provides a family of the type \eqref{eq:2fam} including the initial one.

We will often restrict the family \eqref{eq:2fam} to the elements of a $\sigma$-invariant
compact subset $\bar\W\subseteq\W$. It is easy to check that
$C^{0,2}(\WR,\R)\hookrightarrow C^{0,2}(\bar\W\times\R,\R)$, so
that it makes perfect sense to define $\tau$ on $\bar\W\times\R$ as in \eqref{def:2tau}.
In some of these cases, $\bar\W$ will be the closure $\W_\w$
of the $\sigma$-orbit $\{\wt\,|\;t\in\R\}$ of a point $\w\in\W$. Recall that the
base flow $(\W,\sigma)$ is {\em transitive\/} if there exists $\w\in\W$ with $\W_{\w}=\W$,
in which case this happens for a residual subset of points: see, e.g., \cite[Theorem 9.20]{gohe}.
The set $\W$ is {\em minimal\/} if $\W_\w=\W$ for all $\w\in\W$. In addition, a
set $\mM\subseteq\W$ (resp.~$\mK\subset\WR$) is {\em minimal} if it is compact, $\sigma$-invariant
(resp.~$\tau$-invariant), and does not contain properly any other such set.

We denote by $\merg$ the nonemtpy set of $\sigma$-ergodic measures on $\W$.
\subsection{Equilibria, semiequilibria, and upper and lower solutions}
Let $\mb\colon\W\to\R$ be a map, and assume that $v(t,\w,\mb(\w))$
exists for all $(t,\w)\in\R\times\W$. If $\mb(\wt)\le v(t,\w,\mb(\w))$
(resp.~$\mb(\wt)\ge v(t,\w,\mb(\w))$) for all $t\ge0$ and $\w\in\W$, then $\mb$ is a
$\tau$-{\em subequilibrium} (resp.~$\tau$-{\em superequilibrium});
and it is {\em strong\/} if, in addition, there exists a time $s_*>0$ such that $\mb(\ws_*)<v(s_*,\w,\mb(\w))$
(resp.~$\mb(\ws_*)>v(s_*,\w,\mb(\w))$) for all $\w\in\W$.
A {\em $\tau$-equilibrium} is a map $\mb\colon\W\to\R$ with $\tau$-invariant graph; i.e.,
with $\mb(\wt)= v(t,\w,\mb(\w))$ for all $(t,\w)\in\R\times\W$.
The graph of a continuous $\tau$-equilibrium $\mb$, represented by $\{\mb\}$,
is a {\em $\tau$-copy of the base\/} or {\em $\tau$-copy of $\W$}.
The prefix $\tau$ will be often omitted.

The concepts of superequilibria and subequilibria are strongly related to those of global
upper and lower solutions.
A {\em bounded global lower solution\/} of the family \eqref{eq:2fam} is a bounded map
$\mb\colon\W\to\R$ such that $t\mapsto\mb(\wt)$ is $C^1$ and $\mb'(\w)\le\mh(\w,\mb(\w))$ for all
$\w\in\W$, where $\mb'(\w)=(d/dt)\,\mb(\wt)|_{t=0}$, and it is {\em strict} if the inequality is
strict for all $\w\in\W$. By changing the sign of the inequalities, we obtain the definition of
({\em strict}) {\em bounded global upper solution\/}.
If every forward $\tau$-semiorbit is globally defined,
then $\mb$ is a superequilibrium (resp.~subequilibrium)
if and only if it is a global upper (resp.~lower) solution, and
it is a strong superequilibrium (resp.~subequilibrium) if it is
a strict global upper (resp.~lower) solution (see Sections 3 and 4 of \cite{nono}).
\subsection{Some properties of compact $\tau$-invariant sets}\label{subsec:22}
Let $\mB\subset\WR$ be a bounded $\tau$-invariant set, and let us
denote $\mB_\w:=\{x\in\R\,|\;(\w,x)\in\mB\}$ and call it
{\em the fiber of\/} $\mB$ over $\w$.
If $\mB$ {\em projects onto $\W$}
(i.e., if $\mB_\w$ is nonempty for all $\w\in\W$),
then the maps $\ml_\mB(\w):=\inf\mB_\w$ and
$\muk_\mB(\w):=\sup\mB_\w$ define $\tau$-equilibria, called
the {\em lower\/} and {\em upper equilibria of $\mB$}. If $\mK\subset\WR$ is compact, then
$\ml_\mK$ and $\muk_\mK$ are respectively lower and upper semicontinuous, and hence
they are $m$-measurable for any ergodic measure in $\W$, as well as continuous at the
points of a $\sigma$-invariant residual subset of $\W$.

Two compact subsets $\mK_1$ and $\mK_2$ of $\W\times\R$ are {\em ordered\/} with
$\mK_1<\mK_2$ or $\mK_2>\mK_1$ if $x_1<x_2$ whenever there exists $\w\in\W$ such that
$(\w,x_1)\in\mK_1$ and $(\w,x_2)\in\mK_2$. In this case,
we say that $\mK_1$ {\em is below\/} $\mK_2$ or that $\mK_2$ {\em is above\/} $\mK_1$.
The compact sets
$\mK_1$ and $\mK_2$ {\em are (at least) at a positive distance $\delta>0$} if
$x_1+\delta<x_2$ whenever $(\w,x_1)\in\mK_1$ and $(\w,x_2)\in\mK_2$.
Two (ordered) bounded equilibria $\mb_1,\mb_2$ with $\mb_1<\mb_2$
(i.e., with $\mb_1(\w)<\mb_2(\w)$ for all $\w\in\W$) are {\em uniformly separated\/} if
$\inf_{\w\in\W}(\mb_2(\w)-\mb_1(\w))>0$.

\begin{defi}\label{def:unifexpstable}
A compact $\tau$-invariant set $\mK\subset\W\times\R$ which projects onto $\W$
is \emph{uniformly exponentially stable at $+\infty$} (resp. at $-\infty$) if there
exist a \emph{radius of uniform stability} $\rho>0$ and
a \emph{dichotomy constant pair} $(k,\gamma)$ with $k\ge1$ and $\gamma>0$
such that, if $(\w,x)\in\mK$ and $(\w,y)\in\W\times\R$ satisfy $|x-y|<\rho$,
then $v(t,\w,y)$ is defined for all $t\ge0$ (resp. $t\le0$) and
\[
\begin{split}
|v(t,\w,x)-v(t,\w,y)|&\le k\,e^{-\gamma\,t}|x-y|\quad\text{for all }t\ge0\,,\\
(\text{resp. }|v(t,\w,x)-v(t,\w,y)|&\le\, k\,e^{\,\gamma\,t}\,|x-y|\quad\text{for all }t\le0)\,.
\end{split}
\]
\end{defi}
\begin{defi}\label{def:hcotb}
A compact $\tau$-invariant set is {\em hyperbolic attractive\/} (resp.~{\em hyperbolic repulsive})
if it is uniformly exponentially stable at $+\infty$ (resp.~at $-\infty$), and
is {\em nonhyperbolic\/} if it is neither hyperbolic attractive nor hyperbolic repulsive.
\end{defi}
We do not recall here the
well-known definition and properties of the {\em Lyapunov exponents\/} of a compact $\tau$-invariant
set: the reader can find in \cite[Section 2.1]{dno5} a summary of these properties,
which we will use. In particular, if a compact $\tau$-invariant set is hyperbolic attractive
(resp.~repulsive), then its upper (resp.~lower) Lyapunov exponent is strictly  negative
(resp.~positive). In this case, it is ``often" a copy of the base.
The first of the following two nontrivial results, which are essential
in the proofs of the main results of Section \ref{sec:6},
illustrates the scope of this assertion:
\begin{teor}\label{th:3copia}
Let $\mh\in C^{0,1}(\WR,\R)$.
Let $\mK\subset\WR$ be a compact $\tau$-invariant set projecting onto $\W$.
Assume that one of the following conditions holds:
\begin{itemize}[leftmargin=20pt]
\item[\rm (a)] $\ml_\mK(\w)$ and $\muk_\mK(\w)$ coincide (at least) at a point $\w$ of each
minimal subset $\mM\subseteq\W$;
\item[\rm (b)] $\mK=\{(\w,x)\in\WR\,|\;\ml_\mK(\w)\le x\le\muk_\mK(\w)\}$.
\end{itemize}
Then, $\mK$ is hyperbolic if and only if it is a hyperbolic copy of $\W$.
More precisely, the upper (resp. lower) Lyapunov
exponent of $\mK$ is strictly negative (resp.~positive) if and only if $\mK$ is
an attractive (resp. repulsive) hyperbolic copy of $\W$.
\par
In addition, if either $\mK$ (and hence $\W$) is minimal or its upper and
lower equilibria coincide on a $\tau$-invariant subset $\W_0\subseteq\W$
with $m(\W_0)=1$ for all $m\in\merg$, then condition {\rm (a)} holds.
\end{teor}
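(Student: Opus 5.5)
The plan has three parts. First, I reduce both (a) and (b) to showing that a hyperbolic $\mK$ has $\ml_\mK=\muk_\mK$ everywhere. Second, I use the Lyapunov characterization together with continuity to obtain the converse implication and the "more precisely" part. Third, I verify the two sufficient conditions for (a).

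One implication is immediate: a hyperbolic copy of $\W$ is a hyperbolic compact invariant set. For the converse, I treat the attractive case; the repulsive case follows by time reversal, since $t\mapsto -t$ turns $\mh$ into $-\mh$ and exchanges the roles of the upper and lower exponents. So let $\mK$ be hyperbolic attractive, with radius $\rho$ and constants $(k,\gamma)$.

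The key step is to show that any two points $(\w,x_1),(\w,x_2)\in\mK$ in the same fiber must coincide.

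Under (b), $\mK$ contains the whole segment between $\ml_\mK(\w)$ and $\muk_\mK(\w)$. Split this segment into finitely many pieces of length $<\rho$ and apply uniform exponential stability at $+\infty$ to consecutive points. This gives $|v(t,\w,\ml_\mK(\w))-v(t,\w,\muk_\mK(\w))|\le Ck e^{-\gamma t}$, with $C$ bounded uniformly since $\mK$ is compact. By invariance, $\muk_\mK(\wt)-\ml_\mK(\wt)\to0$ uniformly in $\w$ as $t\to\infty$. Now, given $\w$, pick $\w_t:=\w{\cdot}(-t)$. Then $\muk_\mK(\w)-\ml_\mK(\w)=\muk_\mK(\w_t{\cdot}t)-\ml_\mK(\w_t{\cdot}t)\le Ck e^{-\gamma t}$ for every $t$, so the fiber is a single point.

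Under (a), I argue by contradiction using stability plus compactness. Suppose the set $\{\w:\ml_\mK(\w)<\muk_\mK(\w)\}$ is nonempty. The gap function $d(\w)=\muk_\mK(\w)-\ml_\mK(\w)$ is upper semicontinuous and invariant in the sense that $d(\wt)$ is nonincreasing along the flow; this holds because scalar solutions preserve order and $\ml_\mK,\muk_\mK$ are equilibria. Points of the same fiber that are within $\rho$ contract exponentially. Points farther apart may fail to contract, so I use the scalar ordering: $\mK$ sits between the equilibria $\ml_\mK$ and $\muk_\mK$, and for fixed $\w$ the solutions through all points of the fiber remain in $\mK$. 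Each fiber is compact, and the $\rho$-neighbourhoods of its points cover it; chaining points within the compact fiber shows that the fiber's connected components each collapse. More robustly, I take an $\alpha$-limit argument. Take $\w_0$ with $d(\w_0)=\delta>0$. Along the backward orbit $d$ stays $\ge\delta$, and the limit points lie in some minimal set $\mM$. Hypothesis (a) gives $\bar\w\in\mM$ with $d(\bar\w)=0$. The lower semicontinuity of $\ml$ and upper semicontinuity of $\muk$ do not immediately contradict $\delta>0$. I would combine this with the local attractivity of the single point $(\bar\w,\ml_\mK(\bar\w))$: its $\rho$-neighbourhood is uniformly attracted, so nearby fibers $\w$ close to $\bar\w$ have all points of $\mK_\w$ within $\rho/2$ of $\ml_\mK(\bar\w)$ (by compactness of $\mK$ and the closed graph property). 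They therefore contract to a single trajectory. Reaching such a fiber from the backward orbit of $\w_0$ gives $d(\w_0)\le 2k e^{-\gamma t}\rho$ for arbitrarily large $t$, hence $d(\w_0)=0$. This is the main obstacle: making uniform the passage from "collapse near one point of each minimal set" to "collapse everywhere". I expect to use the uniform radius $\rho$ and the fact that every backward orbit accumulates on some minimal set.

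Once $\ml_\mK=\muk_\mK=:\mc$, its graph $\mK$ is compact, so $\mc$ is both upper and lower semicontinuous, hence continuous. Thus $\mK$ is a copy of $\W$, and it inherits hyperbolicity.

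For the Lyapunov statement, I use the summary in \cite[Section 2.1]{dno5}. A strictly negative upper exponent yields uniform exponential stability at $+\infty$ of the compact set, by the standard uniform exponent argument on the linearization $\mh_x$. The previous steps then show $\mK$ is an attractive hyperbolic copy, and the converse direction is immediate.

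Finally, the sufficient conditions for (a). If $\mK$ is minimal, then $\W$, its projection, is minimal too. Let $\w$ be a continuity point of $\ml_\mK$ and $\muk_\mK$, which exists by residuality. If $\ml_\mK(\w)<\muk_\mK(\w)$, then the graph of $\muk_\mK$ restricted to continuity points has closure equal to a compact invariant set strictly contained in $\mK$, contradicting minimality. If instead the equilibria coincide on $\W_0$ with full measure for all ergodic $m$, then every minimal $\mM$ carries some ergodic $m$ with support in $\mM$, so $\W_0\cap\mM\neq\emptyset$, which gives (a).
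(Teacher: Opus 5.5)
Your proof is correct, and it takes a different route from the paper's. The paper proves very little directly. For (a), both the characterization and the final assertion are quoted from \cite[Theorem 2.8]{dno4}, which is built on \cite[Proposition 2.8]{cano}. Case (b) is then reduced to (a): the paper restricts $\mK$ to each minimal set $\mM$ and notes that the upper exponent of $\mK^\mM$ stays negative, by extending invariant measures on $\mM$ to $\W$. It then repeats the argument of \cite[Theorem 3.4]{cano}, which uses minimality of $\mM$ and connectedness of $\mK^\mM$, to show that $\mK^\mM$ is a graph over $\mM$.

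You instead rest everything on one standard input: a negative upper exponent gives uniform exponential stability with a uniform radius $\rho$. From there you collapse the fibres by hand.
\begin{itemize}
\item Under (b), chaining through the convex fibres gives $\muk_\mK(\wt)-\ml_\mK(\wt)\le kDe^{-\gamma t}$ uniformly. Evaluating at $\w{\cdot}(-t)$ then forces $\ml_\mK=\muk_\mK$, with no use of minimal sets at all.
\item Under (a), you use the $\alpha$-limit argument. Upper semicontinuity of the gap at the point $\bar\w\in\mM\subseteq\W^{\upalpha}_{\w_0}$ supplied by (a), plus one application of the stability estimate from $\w_0{\cdot}s_n$ forward to $\w_0$, does the job. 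This is the same device the paper uses later, e.g.\ in the proof of Proposition \ref{prop:4disyuntiva}.
\item Your treatment of the final assertion is also fine. For minimal $\mK$ you take the closure of the graph of $\muk_\mK$ over its continuity points; for the measure condition you use an ergodic measure supported on each minimal set. The paper simply cites this.
\end{itemize}
Your route is self-contained and elementary. The paper's is shorter on the page and records the structural fact that (b) plus a negative exponent forces (a), but it leans on the cited machinery. Both ultimately use the same semi-uniform fact linking the Lyapunov exponent to uniform exponential decay.

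Two side remarks in your write-up are wrong or idle and should be removed. First, the claim that $d(\wt)=\muk_\mK(\wt)-\ml_\mK(\wt)$ is nonincreasing because solutions preserve order is false. Order preservation only keeps the two solutions ordered; their distance can grow, for instance when $\mK$ is the union of two disjoint attractive copies of $\W$ whose separation varies along orbits. So ``along the backward orbit $d$ stays $\ge\delta$'' is unjustified. Your final estimate $d(\w_0)\le k\rho\,e^{\gamma s_n}$ with $s_n\to-\infty$ does not use it, so nothing breaks. Second, the ``chaining within connected components of a fibre'' idea under (a) leads nowhere, since components may be single points, and can be dropped.

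The ``main obstacle'' you flag is not actually one. The argument is pointwise in $\w_0$ and needs only the uniform constants $(\rho,k,\gamma)$, together with the fact that every point of a minimal subset of $\W^{\upalpha}_{\w_0}$ is a limit of $\w_0{\cdot}s_n$ with $s_n\to-\infty$.
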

\begin{proof}
Observe that the ``if" condition is trivial.
Under condition (a), the characterization and the last assertion
are the statements of \cite[Theorem 2.8]{dno4},
based on \cite[Proposition 2.8]{cano}.
Let us check that, if (b) holds and the upper Lyapunov exponent of $\mK$ is strictly negative,
then (a) holds.  Given a minimal subset $\mM\subseteq\W$, we define
$\mK^\mM:=\{(\w,x)\in\mK\,|\;\w\in\mM\}=\{(\w,x)\in\mM\times\R\,|\;\ml_\mK(\w)\le x\le\muk_\mK(\w)\}$.
Any $\sigma$-invariant measure $m_\mM$ on $\mM$ can be extended in a trivial way to a
$\sigma$-invariant measure $m_\W$ on $\W$ concentrated on $\mM$. Hence, the upper Lyapunov
exponent of $\mK^\mM$ is
strictly negative: see, e.g., \cite[Section 2.1]{dno5}.
The minimality of $\mM$ and the connectedness of $\mK^\mM$ allow us to repeat
the arguments of the proof of \cite[Theorem 3.4]{cano}, leading us to conclude
that $\mK^\mM$ is the graph of a continuous map on $\mM$, and clearly this ensures (a).
The case of strictly positive lower Lyapunov exponent can be treated in the same way.
\end{proof}
\begin{prop}\label{prop:2bocata}
Let $\mb_1\colon\W\to\R$ be a lower (resp.~upper) semicontinuous $\tau$-subequilibrium and
let $\mb_2\colon\W\to\R$ be an upper (resp.~lower) semicontinuous $\tau$-superequilibrium with
$\mb_1\le\mb_2$ (resp.~$\mb_1\ge\mb_2$).
Then, there exists a maximal $\tau$-invariant
compact set $\mK$ contained between the graphs of $\mb_1$ and $\mb_2$
which, in addition, projects onto $\W$ and
takes the form of Theorem {\rm\ref{th:3copia}(b)}.
Moreover, $\mK$ does not intersect the graph of $\mb_i$ if $\mb_i$
is strong;
and, if $\mb_1\le\mb_2-\delta$ (resp.~$\mb_1\ge\mb_2+\delta$) for a constant $\delta>0$
and any of the two semiequilibria, $\mb_i$, is a repulsive (resp.~attractive)
hyperbolic copy of $\W$, then $\mK$ contains a $\tau$-invariant compact set $\tilde\mK$
which does not intersect the graph of $\mb_i$.
\end{prop}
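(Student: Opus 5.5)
I plan to treat the case $\mb_1\le\mb_2$ with $\mb_1$ lower semicontinuous subequilibrium and $\mb_2$ upper semicontinuous superequilibrium; the other case follows by reversing time and order. First, since $\mb_1\le\mb_2$ are bounded and orbits starting between them cannot escape, every forward semiorbit through a point $(\w,x)$ with $\mb_1(\w)\le x\le\mb_2(\w)$ is globally defined for $t\ge0$. Monotonicity of $x\mapsto v(t,\w,x)$ gives $\mb_1(\wt)\le v(t,\w,\mb_1(\w))\le v(t,\w,x)\le v(t,\w,\mb_2(\w))\le\mb_2(\wt)$. The closed region $\mathcal{C}:=\{(\w,x)\,|\;\mb_1(\w)\le x\le\mb_2(\w)\}$ is compact by the semicontinuity assumptions, and it is positively invariant. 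I then define $\mK:=\bigcap_{t\ge0}\tau_t(\mathcal{C})$. As a decreasing intersection of compact sets it is compact, and it is the maximal $\tau$-invariant set in $\mathcal{C}$: any invariant subset of $\mathcal C$ is contained in each $\tau_t(\mathcal C)$, and $\mK$ is invariant by a standard argument.

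For the fiber structure, I use that $\tau_t(\mathcal C)$ over $\w$ is the interval $[v(t,\w{\cdot}(-t),\mb_1(\w{\cdot}(-t))),\,v(t,\w{\cdot}(-t),\mb_2(\w{\cdot}(-t)))]$. These endpoints are respectively nondecreasing and nonincreasing in $t$, because $\mb_1$ is a subequilibrium and $\mb_2$ a superequilibrium. Their limits $\ml(\w)\le\muk(\w)$ are nonempty nested limits, so $\mK_\w=[\ml(\w),\muk(\w)]$. This shows that $\mK$ projects onto $\W$ and has the form of Theorem~\ref{th:3copia}(b).

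If $\mb_1$ is strong, I take $s_*$ with $\mb_1(\w{\cdot}s_*)<v(s_*,\w,\mb_1(\w))$ for all $\w$. Suppose $(\w,\mb_1(\w))\in\mK$. Invariance gives a backward orbit in $\mK$: the point $(\w{\cdot}(-s_*),y)$ with $y\ge\mb_1(\w{\cdot}(-s_*))$ is sent to $\mb_1(\w)$. Monotonicity then yields $\mb_1(\w)\ge v(s_*,\w{\cdot}(-s_*),\mb_1(\w{\cdot}(-s_*)))>\mb_1(\w)$, a contradiction. The argument for $\mb_2$ is symmetric.

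The main obstacle is the final claim. Say $\mb_1$ is a repulsive hyperbolic copy with $\mb_1\le\mb_2-\delta$. Then $\mb_1$ itself is contained in $\mK$, so I must produce a different invariant compact $\tilde\mK$. I would use the radius $\rho$ of uniform exponential stability at $-\infty$, with $\rho<\delta$. Points $(\w,\mb_1(\w)+\ep)$ with $0<\ep<\rho$ must leave the $\rho$-neighborhood forward in time, since backward contraction means forward expansion. I would show that $\mb_1+\ep_0$, for a suitable small $\ep_0$, acts as a strong subequilibrium after a time shift. Concretely, $\tilde\mK:=\bigcap_{t\ge0}\tau_t(\{\mb_1+\ep_0\le x\le\mb_2\})$, and I need this region to be positively invariant up to a finite time. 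The delicate point is to verify that orbits starting above $\mb_1+\ep_0$ never return below it. I would try to get this from the exponential estimate, which gives $v(t,\w,\mb_1(\w)+\ep)-\mb_1(\wt)\ge k^{-1}e^{\gamma t}\ep$ while the orbit stays in the $\rho$-tube. As an alternative, I would take $\tilde\mK$ as the omega-limit set of the compact set $\{(\w,x)\,|\;\mb_1(\w)+\rho/2\le x\le\mb_2(\w)\}$. That set is bounded away from $\{\mb_1\}$ because orbits starting above $\mb_1+\rho/2$ stay above $\mb_1+\rho/(2k)$. By the same invariance argument, its omega-limit is compact, invariant and nonempty.
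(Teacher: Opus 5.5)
Your proposal is correct. For $\mK$ itself you follow essentially the paper's construction, but in the opposite orientation. The paper sketches the negatively invariant case $\mb_1\ge\mb_2$ through the monotone limits $\mc_i$ of the backward transports $(\mb_i)_s$. You treat the positively invariant case with $\mK=\bigcap_{t\ge0}\tau_t(\mathcal{C})$, whose fibers are exactly the nested intervals between the forward transports, and recover the other case by reversing time, which exchanges sub- and superequilibria and attractive and repulsive copies. The intersection gives compactness and maximality with no semicontinuity bookkeeping. Your strong-case argument is pointwise and does not need the uniform gap the paper uses.

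The real difference is in the last assertion. The paper argues by contradiction that the transports of the far semiequilibrium never enter the $\rho$-neighbourhood of the hyperbolic copy: if they did, they would converge to it while staying $\delta$ away from it for large times. Hence the opposite boundary of $\mK$ stays at distance $\ge\rho$, and $\tilde\mK$ is the closure of its graph. You instead use a quantitative expansion bound and take the $\omega$-limit set of $\{\mb_1+\rho/2\le x\le\mb_2\}$. This works: the set lies in $\mK$ by maximality, and it projects onto $\W$ because that region does, which is what the proof of Theorem~\ref{th:3casosS} later needs.

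State the bound precisely. The inequality $v(t,\w,x)-\mb_1(\wt)\ge k^{-1}e^{\gamma t}(x-\mb_1(\w))$ holds at every $t\ge0$ at which $v(t,\w,x)-\mb_1(\wt)<\rho$, since it follows from the backward estimate applied from that single point. So no ``while the orbit stays in the tube'' proviso is needed, and re-entry into the tube is harmless.

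Finally, the same bound applied to the upper endpoints $v(t,\w{\cdot}(-t),\mb_2(\w{\cdot}(-t)))$ (recall $\rho<\delta$) gives $\muk\ge\mb_1+\rho$ for the upper boundary $\muk$ of $\mK$, after letting $t\to\infty$. You could therefore skip the $\omega$-limit and take the closure of the graph of $\muk$, exactly as the paper does. Your first, abandoned attempt can simply be dropped.
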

\begin{proof}
Let us sketch the proof in the less standard case $\mb_1\ge\mb_2$. All the involved ideas
are detailed in \cite[Sections 3 and 4]{nono}.
The results there are
formulated for a minimal base flow, but this property is not used in the proofs.
We check that: the region
delimited by the graphs is negatively time-invariant; for $s<0$, the maps
\begin{equation}\label{def:3b}
 (\mb_i)_s(\w):=v(s,\w{\cdot}(-s),\mb_i(\w{\cdot}(-s)))
\end{equation}
are also $\tau$-semiequilibria inheriting the semicontinuity property of the generators
$\mb_i$,
satisfy $\mb_2\le(\mb_2)_s\le(\mb_1)_s\le\mb_1$ for all $s<0$,
and behave monotonically with respect to $s$ (decreasing as $s$ decreases in the case
of $(\mb_1)_s$, increasing as $s$ decreases in the case of $(\mb_2)_s$); and
their limits $\mc_i:=\lim_{s\to-\infty}(\mb_i)_s$
define two $\tau$-equilibria with $\mb_2\le\mc_2\le\mc_1\le\mb_1$,
and with $\mc_1$ upper semicontinuous and $\mc_2$ lower semicontinuous. So,
$\mK=\{(\w,x)\,|\;\mc_2(\w)\le x\le\mc_1(\w)\}$ is a compact
$\tau$-invariant set projecting onto $\W$ and taking the form of
Theorem {\rm\ref{th:3copia}(b)}.

Let us show the maximality of $\mK$, i.e,
that it contains any other compact $\tau$-invariant set $\mM$
lying between the graphs of $\mb_1$ and $\mb_2$: if,
$\mb_1(\w)\ge\muk_\mM(\w)$ for all $\w\in\W$, then
$(\mb_1)_s(\w)\ge v(s,\w{\cdot}(-s),\muk_\mM(\w{\cdot}(-s)))=\muk_\mM(\w)$, and hence
$\mc_1(\w)\ge \muk_\mM(\w)$; and, analogously, $\mc_2(\w)\le\ml_\mM(\w)$.

If, in addition,
$\mb_1$ (resp.~$\mb_2$) is strong, then there exist $s_0<0$ and $\delta>0$
such that $(\mb_1)_{s_0}\le\mb_1-\delta$ (resp.~$(\mb_2)_{s_0}\ge\mb_2+\delta$),
which proves that $\mK$ does not intersect the graph of $\mb_1$ (resp.~$\mb_2$).

We will prove the last assertion in the case that $\mb_1$ is a hyperbolic attractive
hyperbolic copy of $\W$ with radius of uniform stability $\rho$. We assume for contradiction that
there exists $s<0$ and $\w\in\W$ such that $(\mb_2)_s(\w)\ge\mb_1(\w)-\rho$, which
combined with $(\mb_2)_s(\w)\le\mb_1(\w)$ ensures that
$\lim_{t\to\infty}(\mb_1(\wt)-v(t,\w,(\mb_2)_s(\w)))=0$. But
$v(t,\w,(\mb_2)_s(\w))=v(t+s,\w{\cdot}(-s),\mb_2(\w{\cdot}(-s)))\le \mb_2(\wt)$ for $t\ge-s$,
and hence $\mb_1(\wt)-v(t,\w,(\mb_2)_s(\w))\ge\mb_1(\wt)-\mb_2(\wt)\ge\delta$, which
provides the contradiction. Hence $\mc_2\le\mb_1-\rho$,
and the closure of its graph is the set $\tilde\mK$ of the statement.
%
%
\end{proof}
Finally, if a compact $\tau$-invariant set $\mA\subset\WR$ satisfies
$\lim_{t\to\infty} \text{dist}(\mC{\cdot}t,\mA)=0$ for every bounded set $\mC$,
where $\mC{\cdot}t=\{(\wt,v(t,\w,x))\,|\;(\w,x)\in\mC\}$ and
\[
\text{dist}(\mC_1,\mC_2)=\sup_{(\w_1,x_1)\in\mC_1}\left(\inf_{(\w_2,x_2)\in\mC_2}
\big(\mathrm{dist}_{\WR}((\w_1,x_1),(\w_2,x_2))\big)\right)\,,
\]
then $\mA$ is the (unique) {\em global attractor for $\tau$}.
\subsection{The hull construction}\label{subsec:2hull}
A continuous map $h\colon\RR\to\R$ is {\em admissible\/} if the restriction of
$h$ to $\R\times\mJ$ is bounded and uniformly continuous for any compact set $\mJ\subset\R$.
Let the map $h\colon\RR\to\R$ be admissible and have admissible first and second derivatives
with respect to its second variable, $h_x,\,h_{xx}\colon\RR\to\R$. We define $h{\cdot}t(s,x):=h(t+s,x)$.
The {\em hull\/} $\W_h$ of $h$ is the closure of
$\{h{\cdot}t\,|\;t\in\R\}$ on the set $C(\RR,\R)$ provided with the compact-open
topology. The set $\W_h$ is a compact metric space,
the time-shift map $\sigma_h\colon\R\times\W_h\to\W_h,(t,\w)\mapsto\w{\cdot}t$
defines a global continuous flow, and the map $\mh$ given by $\mh(\w,x)=\w(0,x)$
is continuous on $\W_h\times\R$ and has continuous first and second derivative
with respect to its second variable, $\mh_x,\,\mh_{xx}\colon\W_h\times\R\to\R$.
The proof of these properties can be found in \cite[Theorem~I.3.1]{shyi4} and
\cite[Theorem IV.3]{sell2}.
Note that $(\W_h,\sigma_h)$ is a transitive flow, since the $\sigma_h$-orbit of
$h\in\W_h$ is dense in $\W_h$.
More precisely, if $\W_h^\upalpha$ and $\W_h^\upomega$ are the \upalfa-limit set
and \upomeg-limit set of the element $h\in\W_h$, then
$\W_h=\W_h^\upalpha\cup\{h{\cdot}t\,|\;t\in\R\}\cup\W_h^\upomega$, as proved in \cite[Lemma 2.4]{dno4}.
The map $h$ is {\em recurrent} if $(\W_h,\sigma_h)$ is a {\em minimal flow}, i.e., if every
$\sigma_h$-orbit is dense in~$\W_h$.

Observe that the initial equation $x'=h(t,x)$ is included in the family $x'=\mh(\wt,x)$:
it corresponds to $\w=h$. Thus, a global description of the dynamics of the associated skewproduct
flow yields substantial information about the dynamics of the process induced by the equation.
\section{D-concave framework and loss-of-hyperbolicity bifurcation points}\label{sec:3}
Let $(\W,\sigma)$ be a global continuous flow on a connected compact metric space.
The connectedness is assumed throughout the paper just to slightly simplify the
dynamical description and the comprehension of the results: the compactness and connectedness
of $\W$ ensure that two copies of $\W$
are disjoint if and only if they are strictly ordered,
and if and only if the continuous equilibria which determine them are uniformly separated.

Our first goal in this section is to provide a suitable concept of bifurcation point
for a d-concave $\lb$-parametric family
\begin{equation}\label{eq:3hlb}
 x'=\mh(\wt,x)+\lb\,,\quad\w\in\W
\end{equation}
under certain assumptions about $\mh$, which are described below. The second objective,
addressed in Subsection \ref{subsec:31}, is to recall the description of one of the
possible global bifurcation diagrams, referred to as ``S-shaped'',
and to delve deeper into the dynamical circumstances that give rise to it.
This type of bifurcation diagram and its causes
are particularly relevant to the rest of the article.

The hypotheses on $\mh\colon\WR\to\R$ are
\begin{enumerate}[leftmargin=30pt,label=\rm{\bf{d\arabic*}}]
\item\label{d1} $\mh\in C^{0,2}(\WR,\R)$,
\item\label{d2} $\lim_{x\to\pm\infty}(\pm\mh(\w,x))=-\infty$ uniformly on $\W$,
\item\label{d3} $m(\{\w\in\W\,|\;x\mapsto \mh_x(\w,x) \text{ is concave}\})=1$
for all $m\in\merg$,
\item\label{d4} $m(\{\w\in\W\,|\; x\mapsto \mh_{xx}(\w,x)$ is strictly decreasing
on $\mJ\})>0$ for all compact interval $\mJ\subset\R$ and all $m\in\merg$,
\end{enumerate}
and they are obviously inherited by the map $\mh+\lb\colon\WR\to\R$ for all $\lb\in\R$.
The concept of bifurcation point must capture the idea of a significant change in
the global dynamics induced on $\W\times\R$ by the flow $\tau_\lb$
determined by the family.

Conditions \ref{d1} and \ref{d3} are the framework of what we call the
{\em d-concave in measure case}, \ref{d4} is a hypothesis on local strict
d-concavity which precludes the {\em quadratic case\/} $\mh(\w,x)=
\mc(\w)\,x^2+\md(\w)\,x+\mathfrak e(\w)$, and \ref{d2} ensures
(or, more precisely, is equivalent to) the classical coercivity property
``$\limsup_{x\to\pm\infty}(\pm\mh(\w,x)+\lb)<0$ uniformly on $\W$" for all $\lb\in\R$,
as proved in \cite[Proposition 3.3]{duen}.

\begin{notas}\label{rm:3pre}
1.~Conditions \ref{d1} and \ref{d2} ensure the existence of the global
attractor $\mA_\lb$ of the skewproduct flow $\tau_\lb$. This assertion is proved in
\cite[Proposition 5.5]{dno4}, which also shows that any forward $\tau_\lb$-semiorbit
is globally defined and bounded, and that $\mA_\lb$
is the set of all the globally bounded $\tau$-orbits. As said in Section \ref{subsec:22},
its upper and lower equilibria (i.e.,
the maps $\ml_\lb\colon\W\to\R$ and $\muk_\lb\colon\W\to\R$ for which
\begin{equation}\label{def:3ul}
 \mA_\lb=\{(\w,x)\,|\;\w\in\W\text{ and }\ml_\lb(\w)\le x\le\muk_\lb(\w)\})
\end{equation}
are respectively lower and upper semicontinuous $\tau_\lb$-equilibria.
In addition, the map $\R\to\R,\,\lb\mapsto\muk_\lb(\w)$ (resp.~$\R\to\R,\,\lb\mapsto\ml_\lb(\w)$)
is right-continuous (resp.~left-continuous) and strictly increasing for all $\w\in\W$: see
\cite[Proposition 4.2]{dno5}.
\par
2.~Let us assume \ref{d1} and \ref{d2} and define
\[
\begin{split}
 \mK^l_\lb&:=\mathrm{closure}_{\WR}\{(\w,\ml_\lb(\w))\,|\;\w\in\W\}\,,\\
 \mK^u_\lb&:=\mathrm{closure}_{\WR}\{(\w,\muk_\lb(\w))\,|\;\w\in\W\}\,.
\end{split}
\]
It is easy to check that they are $\tau_\lb$-invariant compact sets, and that
$(\mK^l_\lb)_\w=\{\ml_\lb(\w)\}$ (resp.~$(\mK^u_\lb)_\w=\{\muk_\lb(\w)\}$)
at those points $\w$ of the residual subset of continuity points of $\ml_\lb$
(resp.~$\muk_\lb$). So, none of these sets can contain two disjoint
compact $\tau_\lb$-invariant sets projecting onto $\W$.
\par
3.~On the other hand, \cite[Theorem 5.3 and Proposition 5.5]{dno4} show
that, under conditions \ref{d1}, \ref{d2}, \ref{d3} and \ref{d4}, there exist
at most three disjoint and ordered $\tau$-invariant compact sets $\mK_1<\mK_2<\mK_3$
projecting onto $\W$, and that whenever all three exist they are hyperbolic copies of $\W$
with $\mK_1=\{\ml_\lb\}$ and $\mK_3=\{\muk_\lb\}$ attractive, and with $\mK_2$ repulsive
and given by the graph of a continuous $\tau_\lb$-equilibrium $\mm_\lb$,
which separates the domains of attraction of the upper and lower copies of $\W$.
\end{notas}

As explained in the Introduction, providing a definition of bifurcation point $\lb_0$ valid
for a one-parametric family of scalar nonautonomous dynamic equations
is not a trivial task. Our approach in this work and in previous ones suggests that
what is presented below, in Definition \ref{def:3bifloss}, is a promising candidate
(not only for d-concave equations). That definition requires
Theorem~\ref{th:3persistence}, which
recalls a well-known result on the persistence of hyperbolic
copies of $\W$ for which the interested
reader can find a detailed in proof in \cite[Theorem 1.39]{duen}.
Both Definition \ref{def:3bifloss} and Theorem~\ref{th:3persistence} require Definition
\ref{def:3branch}. From now on, when talking about a {\em continuous\/} map
$\mI\to C(\W,\R)$ for $\mI\subseteq\R$,
we refer to the topology of uniform convergence on $C(\W,\R)$.
\begin{defi}\label{def:3branch}
Let $\mI\subseteq\R$ be an interval. A continuous map $\mI\to C(\W,\R),\,\lb\mapsto\mb_\lb$
such that $\mb_\lb$ is an attractive (resp.~repulsive) hyperbolic $\tau_\lb$-copy of $\W$
for all $\lb\in\mI$ is a {\em continuous branch of attractive\/} (resp.~{\em repulsive})
{\em copies of $\W$} for the family \eqref{eq:3hlb}.
\end{defi}
\begin{teor}\label{th:3persistence}
Let us assume that $\mh\in C^{0,1}(\WR,\R)$.
If there exists an attractive (resp.~repulsive) $\tau_{\bar\lb}$-copy of $\W$
for $\bar\lb\in\R$, $\{\bar\mb_{\bar\lb}\}$, then there exists $\delta>0$, $\rho>0$,
and a branch of attractive (resp.~repulsive) copies of $\W$,
$(\bar\lb-\delta,\bar\lb+\delta)\to C(\W,\R),\,\lb\mapsto\mb_\lb$, with $\mb_{\bar\lb}=
\bar\mb_{\bar\lb}$, such that $\rho$ is a radius of uniform stability for $\mb_\lb$ for
all $\lb\in(\bar\lb-\delta,\bar\lb+\delta)$.
\end{teor}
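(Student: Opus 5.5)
We will prove Theorem~\ref{th:3persistence} by writing each nearby copy of $\W$ as a perturbation of $\bar\mb:=\bar\mb_{\bar\lb}$ and solving a fixed point problem on $C(\W,\R)$. We treat the attractive case; the repulsive one follows by reversing time, i.e.\ replacing $\mh$ by $-\mh$ and $\lb$ by $-\lb$.

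\textbf{Step 1: linearization.} Set $a(\w):=\mh_x(\w,\bar\mb(\w))$, which is continuous on $\W$. Hyperbolic attractivity of $\{\bar\mb\}$ gives a strictly negative upper Lyapunov exponent. Since the equation is scalar, this is equivalent to
\[
\exp\int_s^t a(\w{\cdot}r)\,dr\le k\,e^{-\gamma(t-s)}
\]
for all $t\ge s$ and $\w\in\W$, for some constants $k\ge1$ and $\gamma>0$. This is the uniform exponential dichotomy of the linearized flow. It can be obtained directly from Definition~\ref{def:unifexpstable} by differentiating $v(t,\w,x)$ with respect to $x$ at $x=\bar\mb(\w)$.

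\textbf{Step 2: fixed point problem.} Writing $x=\bar\mb(\wt)+y$, the perturbation $y$ must satisfy
\[
y'=a(\wt)\,y+g(\wt,y)+(\lb-\bar\lb),
\]
where $g(\w,y)=\mh(\w,\bar\mb(\w)+y)-\mh(\w,\bar\mb(\w))-a(\w)\,y$. The nonlinearity $g$ is continuous, satisfies $g(\w,0)=0$, and has Lipschitz constant in $y$ of order $\ep$ on $\{|y|\le\ep\}$, uniformly in $\w$. The estimate on $g$ follows from the uniform continuity of $\mh_x$ on compact sets. We look for bounded solutions given by
\[
\mathfrak y(\w)=\int_{-\infty}^0 e^{\int_s^0 a(\w{\cdot}r)\,dr}\,\big(g(\ws,\mathfrak y(\ws))+\lb-\bar\lb\big)\,ds .
\]
The right-hand side defines an operator $T_\lb$ on the closed ball of radius $\ep$ in $C(\W,\R)$. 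Continuity of $T_\lb\mathfrak y$ in $\w$ follows from dominated convergence, using the exponential bound of Step 1. Choose $\ep$ small enough that $k\,\mathrm{Lip}(g)/\gamma<1/2$, and then $\delta\le\gamma\ep/(2k)$. With these choices, $T_\lb$ maps the ball into itself and is a contraction, uniformly for $|\lb-\bar\lb|<\delta$. The fixed point $\mathfrak y_\lb$ is then continuous in $\lb$ in the uniform topology, by the uniform contraction principle. A direct check shows that $t\mapsto\mathfrak y_\lb(\wt)$ solves the equation. Therefore $\mb_\lb:=\bar\mb+\mathfrak y_\lb$ is a continuous $\tau_\lb$-equilibrium, and $\mb_{\bar\lb}=\bar\mb$ by uniqueness of the fixed point.

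\textbf{Step 3: hyperbolicity with a uniform radius.} Linearizing along $\mb_\lb$ gives the coefficient $a_\lb(\w)=\mh_x(\w,\mb_\lb(\w))$, and $|a_\lb-a|$ is small uniformly on $\W$. Hence the exponential bound persists with constants $(k,\gamma/2)$. Applying a Gronwall argument to the nonlinear difference equation for $v(t,\w,y)-\mb_\lb(\wt)$, for $|y-\mb_\lb(\w)|<\rho$ with $\rho$ chosen via the modulus of continuity of $\mh_x$ near $\{\bar\mb\}$, gives uniform exponential stability at $+\infty$. The constant pair here is $(k,\gamma/4)$, independent of $\lb$.

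The main obstacle is this last step: choosing $\rho$ and $\delta$ uniformly. This requires a bootstrap. As long as $|v(t,\w,y)-\mb_\lb(\wt)|<2k\rho$, the trajectory stays in the region where $|\mh_x-a|$ is small. The exponential estimate then keeps it there for all $t\ge0$. If Step 1 instead needs more care, we would fall back on the reference \cite[Theorem 1.39]{duen} for the scalar dichotomy characterization.
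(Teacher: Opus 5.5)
Your proposal is correct. The paper gives no argument for this theorem: it states the result and refers to \cite[Theorem 1.39]{duen}, so there is no in-text proof to compare with step by step.

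Your Lyapunov--Perron contraction argument is a complete proof under the assumed regularity $\mh\in C^{0,1}(\WR,\R)$. The pieces fit together as follows.
\begin{itemize}
\item Step~1 follows from the differentiation argument you mention: $v_x(t,\w,\bar\mb(\w))=\exp\int_0^t a(\w{\cdot}r)\,dr\le k\,e^{-\gamma t}$, and the cocycle identity then gives the bound for all $s\le t$.
\item The backward integral operator is the correct one for the attractive case.
\item Your choices of $\ep$ and $\delta$ give both the self-map estimate and the contraction estimate, uniformly in $\lb$.
\item The uniform contraction principle gives a branch that is even Lipschitz in $\lb$ in $C(\W,\R)$.
\item Uniqueness of the fixed point gives $\mb_{\bar\lb}=\bar\mb_{\bar\lb}$.
\item The bootstrap in Step~3 yields a radius $\rho$ and a dichotomy pair independent of $\lb$, as the statement requires. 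It also shows that forward solutions starting within $\rho$ are globally defined.
\end{itemize}
A side benefit of this route is that nothing uses the special form $+\lb$: the same argument gives persistence under any small $C^{0,1}$ perturbation of $\mh$.

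There are three small points to tidy up; none of them is a gap.
\begin{itemize}
\item With only $\mh_x$ continuous, the Lipschitz constant of $g$ on $\{|y|\le\ep\}$ is the modulus of continuity $\eta(\ep)$ of $\mh_x$ near the graph of $\bar\mb$. This tends to $0$ but need not be of order $\ep$. Smallness is all you actually use.
\item In the repulsive case, reversing time also reverses the base flow ($\w\mapsto\w{\cdot}(-t)$), not only the signs of $\mh$ and $\lb$. Alternatively, use the forward operator $-\int_0^\infty e^{-\int_0^s a(\w{\cdot}r)\,dr}\big(g(\ws,\mathfrak y(\ws))+\lb-\bar\lb\big)\,ds$ directly.
\item In Step~1, rely on the elementary differentiation argument rather than on the equivalence between a negative upper Lyapunov exponent and uniform exponential decay. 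That equivalence is true, but it is a separate, nontrivial theorem and is not specific to the scalar case.
\end{itemize}
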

In particular, if there is a continuous branch
of hyperbolic copies of $\W$ on an interval $\mI$, and if $\mI$ contains
any of its endpoints, then the branch can be extended through this endpoint.
\begin{defi}\label{def:3bifloss}
A {\em bifurcation due loss of hyperbolicity\/} or a {\em loss-of-hyperbolicity
bifurcation} occurs at $\lb_0$,
or {\em $\lb_0$ is a loss-of-hyperbolicity bifurcation point},
if it is an endpoint of an interval $\mI$ on which there exists
a branch of hyperbolic copies of $\W$ that cannot be extended
through $\lb_0$.
\end{defi}

\subsection{S-shaped bifurcation diagrams}\label{subsec:31}
For the reader's convenience, we reproduce here the information of \cite[Theorem 4.4]{dno5},
which describes the global bifurcation diagram in the case of existence of a value
$\lb_0$ of the parameter for which the situation is that described in Remark \ref{rm:3pre}.3.
This result is not only interesting for the information it provides, but also serves as a
key tool for proving the results in the following sections.
In particular, in the described situation,
there are at least two loss-of-hyperbolicity bifurcation points for the family
\eqref{eq:3hlb} (see Definition \ref{def:3bifloss}),
which are denoted in the theorem as $\lb_-$ and $\lb^+$,
and which are the unique ones if $\lb_\diamond=\lb_-$ and
$\lb^\diamond=\lb^+$.
\begin{teor}\label{th:3Dbifur}
Let $\mh\colon\WR\to\R$ satisfy \ref{d1}, \ref{d2}, \ref{d3} and \ref{d4}.
Assume that there exists $\lb_0\in\R$ such that there are three disjoint hyperbolic
$\tau_{\lb_0}$-copies of $\W$. Then, there exists a bounded interval $\mI=(\lb_-,\lb^+)$ with
$\lb_0\in \mI$ and two real values $\lb_\diamond$ and $\lb^\diamond$ with $\lb_\diamond\le\lb_-$ and
$\lb^+\le\lb^\diamond$ such that
\begin{itemize}[leftmargin=20pt]
\item[\rm(i)] for every $\lb\in\mI$, there are three hyperbolic $\tau_\lb$-copies of $\W$, $\{\ml_\lb\}<
    \{\mm_{\lb}\}<\{\muk_{\lb}\}$, with
    $\{\ml_\lb\}$ and $\{\muk_\lb\}$ attractive and $\{\mm_\lb\}$ repulsive. In addition, the maps
    $\mI\to C(\W,\R),\,\lb\mapsto\ml_\lb,-\mm_\lb,\muk_\lb$ are continuous and strictly increasing on $\mI$.
    In particular, $\mm_{\lb_-}(\w):=\lim_{\lb\to(\lb_-)^+}\mm_\lb(\w)$ and
    $\mm_{\lb^+}(\w):=\lim_{\lb\to(\lb^+)^-}\mm_\lb(\w)$ exist for all $\w\in\W$
    and determine (lower and upper) semicontinuous equilibria for the corresponding flows.
\item[\rm(ii)] $\{\ml_\lb\}$ is an attractive hyperbolic $\tau_\lb$-copy of $\W$
    for all $\lb<\lb^+$ and not for $\lb=\lb^+$, the map $(-\infty,\lb_+)\to
    C(\W,\R),\,\lb\mapsto\ml_\lb$ is continuous and strictly increasing,
    $\lim_{\lb\to-\infty}\ml_\lb=
    -\infty$ uniformly on $\W$, $\lim_{\lb\to(\lb^+)^-}\ml_\lb=\ml_{\lb^+}$ pointwise on $\W$,
    and $\inf_{\w\in\W}\big(\mm_{\lb^+}(\w)-\ml_{\lb^+}(\w)\big)=0$.
    Analogously, $\{\muk_\lb\}$ is an attractive hyperbolic $\tau_\lb$-copy
    of the base for all $\lb>\lb_-$ and not for $\lb=\lb_-$,
    $(\lb_-,\infty)\to C(\W,\R),\,\lb\mapsto\muk_\lb$ is continuous and strictly increasing,
    $\lim_{\lb\to\infty}\muk_\lb=\infty$ uniformly on $\W$,
    $\lim_{\lb\to(\lb_-)^+}\muk_\lb=\muk_{\lb_-}$ pointwise on $\W$, and
    $\inf_{\w\in\W}\big(\muk_{\lb_-}(\w)-\mm_{\lb_-}(\w)\big)=0$.
\item[\rm(iii)] $\mA_{\lb}$ is a hyperbolic copy of $\W$ if and only if $\lb\notin[\lb_\diamond,\lb^\diamond]$.
\item[\rm(iv)] If $\lb_\diamond<\lb_-$ and $\lb\in[\lb_\diamond,\lb_-)$, or if $\lb^+<\lb^\diamond$
    and $\lb\in(\lb^+,\lb^\diamond]$,
    then $\inf_{\w\in\W}\big(\muk_\lb(\w)-\ml_{\lb}(\w)\big)=0$
    and there exists at least a
    measure $m_0\in\merg$ such that $m_0(\{\w\in\W\,|\;\ml_\lb(\w)<\muk_\lb(\w)\})=1$.
\item[\rm(v)] If $\W_\w$ is the closure of the $\sigma$-orbit of $\w\in\W$, and
    $\mI_\w=((\lb_-)_\w,(\lb^+)_\w)$ is the interval associated by {\rm (i)} to
    the restriction of the $\lb$-parametric family \eqref{eq:3hlb} to $\W_\w$,
    then $\lb_\diamond=\inf_{\w\in\W}(\lb_-)_\w$ and $\lb_-=\sup_{\w\in\W}(\lb_-)_\w$.
    Analogously, $\lb^+=\inf_{\w\in\W}(\lb^+)_\w$ and $\lb^\diamond=\sup_{\w\in\W}(\lb^+)_\w$.
\item[\rm(vi)] If $\W$ is minimal, then $\lb_-=\lb_\diamond$ and $\lb^+=\lb^\diamond$.
\end{itemize}
\end{teor}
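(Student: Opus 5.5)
The statement is a reproduction of \cite[Theorem 4.4]{dno5}, so my plan follows the structure that result must have. I would build everything from the three copies at $\lb_0$, Theorem \ref{th:3persistence}, Remark \ref{rm:3pre}.3 and the monotonicity of $\lb\mapsto\muk_\lb,\ml_\lb$ in Remark \ref{rm:3pre}.1.

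\textbf{Step 1: the interval $\mI$ and item (i).} Define $\mI=(\lb_-,\lb^+)$ as the maximal open interval containing $\lb_0$ on which three disjoint hyperbolic copies of $\W$ exist. By Theorem \ref{th:3persistence}, the set of such $\lb$ is open, and the three branches continue continuously in $C(\W,\R)$. By Remark \ref{rm:3pre}.3 the outer two coincide with $\ml_\lb$ and $\muk_\lb$ and the middle one is repulsive; call it $\mm_\lb$.

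For monotonicity, take $\lb_1<\lb_2$. Then $\ml_{\lb_1}$ is a strict lower solution for $\lb_2$, so comparison gives $\ml_{\lb_1}<\ml_{\lb_2}$, and similarly for $\muk$. A repulsive copy moves the opposite way: apply Proposition \ref{prop:2bocata} in reversed time (the case $\mb_1\ge\mb_2$). The semicontinuous limits at the endpoints then exist by monotone convergence.

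Boundedness of $\mI$ follows from coercivity \ref{d2}. For $|\lb|$ large, $\mh+\lb$ has constant sign outside a small set, and this forces $\mA_\lb$ to be a single attractive copy, so three copies cannot coexist there.

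\textbf{Step 2: item (ii), the main obstacle.} I would show that the lower copy $\ml_\lb$ persists as a hyperbolic copy for all $\lb<\lb^+$, including $\lb\le\lb_-$, where the upper and middle copies have collided. Persistence continues the branch, and the branch cannot terminate at any $\bar\lb<\lb^+$. My argument for this: for $\bar\lb<\lb^+$, the limit $\lim_{\lb\to\bar\lb^+}\ml_\lb$ is bounded above by $\ml_{\lb'}$ for $\lb'\in\mI$. I would then use d-concavity, via the Lyapunov exponent argument of \cite{dno4}, to show that a collision of $\ml$ with another invariant set can only happen while increasing $\lb$.

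At $\lb^+$, suppose $\ml_{\lb^+}$ were hyperbolic. Then $\mm_\lb$ would have a hyperbolic continuation, giving three copies at $\lb^+$ and contradicting maximality. Hence $\inf(\mm_{\lb^+}-\ml_{\lb^+})=0$. The limit $\ml_\lb\to-\infty$ as $\lb\to-\infty$ follows from \ref{d2}. The statements for $\muk_\lb$ are symmetric.

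\textbf{Step 3: items (iii)--(vi).} Define $\lb_\diamond$ as the infimum and $\lb^\diamond$ as the supremum of the parameters at which $\mA_\lb$ is not a hyperbolic copy. Then (iii) follows from Theorem \ref{th:3copia}(b), applied to $\mA_\lb$, together with monotonicity.

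For (iv), when $\lb\in(\lb^+,\lb^\diamond]$, the copy $\muk_\lb$ is hyperbolic but $\mA_\lb$ is not. By the last assertion of Theorem \ref{th:3copia}, $\ml_\lb$ and $\muk_\lb$ cannot coincide $m$-a.e. for every ergodic $m$, which yields the measure $m_0$. Moreover, $\inf(\muk_\lb-\ml_\lb)=0$: otherwise Proposition \ref{prop:2bocata} would produce a separated invariant set below $\muk_\lb$, hence three copies, which is impossible outside $\mI$.

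For (v), hyperbolicity over $\W$ restricts to each orbit closure $\W_\w$. Conversely, a uniform argument using minimal subsets, each of which lies in some $\W_\w$, gives the inf/sup characterizations. Finally, (vi) holds because, when $\W$ is minimal, condition (a) of Theorem \ref{th:3copia} holds at the collision value, so nonhyperbolicity of $\mA_\lb$ occurs exactly at $\lb_-$ and $\lb^+$.
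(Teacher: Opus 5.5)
The paper gives no proof of this theorem; it reproduces \cite[Theorem 4.4]{dno5}. Judged on its own, Step 1 of your outline is essentially sound, except for the justification of boundedness (see below). The heart of the theorem, however, is missing. In Step 2 you never say what keeps $\{\ml_\lb\}$ hyperbolic for $\lb\le\lb_-$. The claim that ``a collision of $\ml$ with another invariant set can only happen while increasing $\lb$'' is not an argument. It also aims at the wrong target: hyperbolicity can be lost with no collision at all, as in $x'=-x^3+\lb$ at $\lb=0$. What Theorem~\ref{th:3copia}(b) needs is that every $m$-measurable equilibrium in a suitable compact set of that form satisfies $\int_\W\mh_x\,dm<0$.

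The missing idea is a d-concave comparison with the repulsive copy. Fix $\lb<\lb_1$ with $\lb_1\in\mI$. Then $\mm_{\lb_1}$ is a strict upper solution for $\tau_\lb$. Proposition~\ref{prop:2bocata}, applied with a sufficiently negative constant as lower solution, gives a maximal compact $\tau_\lb$-invariant set $\mK$ of the form of Theorem~\ref{th:3copia}(b). This set lies strictly below $\{\mm_{\lb_1}\}$ and contains the graph of $\ml_\lb\le\ml_{\lb_1}$. Now take $m\in\merg$ and an $m$-measurable $\tau_\lb$-equilibrium $\mc$ in $\mK$, and set $y:=\mm_{\lb_1}-\mc$, which is bounded and bounded away from $0$. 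Then $y'/y=\int_0^1\mh_x(\w,\mc(\w)+s\,y(\w))\,ds+(\lb_1-\lb)/y$. Since $\log y$ is bounded, integrating gives $\int_\W\int_0^1\mh_x(\w,\mc(\w)+s\,y(\w))\,ds\,dm=-(\lb_1-\lb)\int_\W y^{-1}\,dm<0$. By \ref{d3}, the inner average is at least $\frac12\big(\mh_x(\w,\mc(\w))+\mh_x(\w,\mm_{\lb_1}(\w))\big)$. Hence $\int_\W\mh_x(\w,\mc)\,dm<-\int_\W\mh_x(\w,\mm_{\lb_1})\,dm<0$, so $\mK=\{\ml_\lb\}$ is attractive hyperbolic for every $\lb<\lb^+$. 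The mirror comparison, from above with $\lb>\lb_1$, handles $\muk_\lb$ for $\lb>\lb_-$.

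The same ingredient is what your large-$|\lb|$ claim and (iii) rely on without saying so.
\begin{itemize}
\item Coercivity only pushes $\mA_\lb$ past any given level; it does not make $\mA_\lb$ a copy. For instance, $x'=-x+\sin(x^2)+\lb$ satisfies \ref{d1} and \ref{d2} but has many equilibria for every large $\lb$.
\item The finiteness of $\lb_\diamond,\lb^\diamond$ comes from applying the comparison to $\mA_\lb$ once $\ml_\lb>\mm_{\lb_1}$.
\item The half-line structure in (iii) comes from the comparison, not from monotonicity. If $\mA_{\lb_2}=\{\muk_{\lb_2}\}$ with $\lb_2>\lb^+$, then for $\lb>\lb_2$ one has $\ml_\lb>\muk_{\lb_2}>\mm_{\lb_1}$, and the comparison applies.
\item Boundedness of $\mI$ alone is easy: for $\lb\in\mI$ with $\lb>\lb_0$, $\ml_\lb$ and $\mm_\lb$ stay in a fixed band between $\ml_{\lb_0}$ and $\mm_{\lb_0}$.
\end{itemize}

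There are also two non sequiturs at $\lb^+$. First, a hyperbolic $\ml_{\lb^+}$ does not yield three copies through a continuation of $\mm_\lb$. It yields them because $\ml_{\lb^+}$ and $\muk_{\lb^+}$ would be disjoint hyperbolic copies (since $\muk_\lb-\mm_\lb$ increases on $\mI$), and then Theorem~\ref{th:3casosS}(c) applies. Second, $\inf(\mm_{\lb^+}-\ml_{\lb^+})=0$ does not follow from the nonhyperbolicity of $\ml_{\lb^+}$. You need to show that a positive infimum would make $\ml_{\lb^+}$ hyperbolic, via the same comparison at equal parameters, made strict by \ref{d4}.

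For (v), restricting to $\W_\w$ only gives the easy inequalities. The reverse ones need an argument such as the following. Take $\lb>\lb^+$ with $\ml_\lb(\w)<\muk_\lb(\w)$. Attractive hyperbolicity of $\muk_\lb$ forces $\inf_{t\le0}(\muk_\lb-\ml_\lb)(\wt)>0$. Then Theorem~\ref{th:3casosS}(c),(e), applied on an orbit closure inside $\W^{\upalpha}_\w$, gives $\lb\le(\lb^+)_{\w_0}$.

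Finally, your sentence on (vi) is false as written, since $\mA_\lb$ fails to be a copy on all of $[\lb_-,\lb^+]$. Item (vi) is in fact immediate from (v), because $\W_\w=\W$ for every $\w$ when $\W$ is minimal.
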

The information given by this theorem is partly depicted in Figure \ref{fig:S}.
Our previous assertion about $\lb_-$ and $\lb^+$ satisfying
Definition \ref{def:3bifloss} follows from (ii).
Note also that, by definition (see (v)),
$\lb_-=(\lb_-)_{\bar \w}$ and $\lb^+=(\lb^+)_{\bar \w}$ if
the $\sigma$-orbit of ${\bar \w}$ is dense in $\W$.
\begin{figure}[ht]
\includegraphics[width=0.9\textwidth]{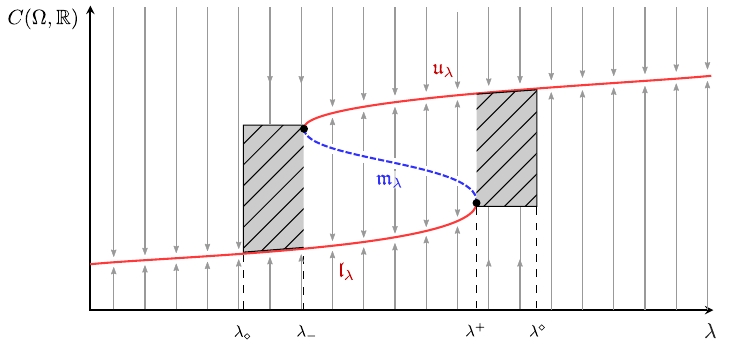}
\caption{A simple representation of the S-shaped bifurcation diagram described in Theorem \ref{th:3Dbifur}.
The solid red curves represent the two strictly increasing branches of
attractive hyperbolic copies of $\W$, $\lb\mapsto\ml_\lb,\muk_\lb$ ,
which cannot be continued through the end point of
the corresponding halfline. The dashed blue curve depicts the
the strictly decreasing branch of repulsive hyperbolic copies of $\W$, $\lb\mapsto\mm_\lb$.
At $\lb_-$ (resp.~$\lb^+$), the large black points represent the pointwise limits of $\muk_\lb$ and $\mm_\lb$
(resp.~$\ml_\lb$ and $\mm_\lb$), which are not uniformly separated and do not define hyperbolic copies of the base:
$\lb_-$ and $\lb^+$ are loss-of-hyperbolicity bifurcation points.
The drawing assumes $\lb_\diamond<\lb_-$ and $\lb^\diamond>\lb^+$ and depicts by gray regions
over $[\lb_\diamond,\lb_-)$ and $(\lb^+,\lb^\diamond]$ the special areas described in Theorem \ref{th:3Dbifur}(iv).
The grey light arrows partly depict the dynamics of the rest of orbits.}
\label{fig:S}
\end{figure}
\begin{defi}\label{def:3S}
The global bifurcation diagram described in Theorem \ref{th:3Dbifur} is called
{\em S-shaped\/} or a {\em double saddle-node global
bifurcation diagram}.
\end{defi}
We complete this section by explaining the cases that lead to the most specific hypothesis of
Theorem~\ref{th:3Dbifur}: the existence of such a value $\lb_0$ of the parameter.
\begin{teor}\label{th:3casosS}
Let $\mh\colon\WR\to\R$ satisfy \ref{d1}, \ref{d2}, \ref{d3} and \ref{d4}.
The following assertions are equivalent:
\begin{itemize}[leftmargin=20pt]
\item[\rm(a)] There exists $\lb_0\in\R$ for which there are three hyperbolic
    $\tau_{\lb_0}$-copies of $\W$. (And, if so,
    the upper and lower ones are $\{\ml_{\lb_0}\}$ and $\{\muk_{\lb_0}\}$, with domains
    of attraction separated by the middle one, $\{\mm_{\lb_0}\}$.)
\item[\rm(b)] There exists $\lb_0\in\R$ for which there is a repulsive hyperbolic
    $\tau_{\lb_0}$-copy of $\W$, $\{\mn_{\lb_0}\}$.
    (And, if so, $\mn_{\lb_0}=\mm_{\lb_0}$.)
\item[\rm(c)] There exists $\lb_0\in\R$ for which there are two disjoint
    hyperbolic $\tau_{\lb_0}$-copies of $\W$. (And, if so, there are three.)
\item[\rm(d)] There exists $\lb_-\in\R$ for which there is an attractive hyperbolic
    $\tau_{\lb_-}$-copy of $\W$ $\{\mb_{\lb_-}\}$ and a $\tau_{\lb_-}$-invariant
    compact set $\mK_-$ projecting onto $\W$  which does not contain any hyperbolic
    $\tau_{\lb_-}$-copy of $\W$ and
    with $\{\mb_{\lb_-}\}<\mK_-$. (And, if so, $\mb_{\lb_-}=\ml_{\lb_-}$,
    $\lb_-$ is the lower bifurcation point described by Theorem {\rm\ref{th:3Dbifur}},
    and $\mK_-:=\bigcup_{\w\in\W}\big(\{\w\}\times
    [\mm_{\lb_-}(\w),\muk_{\lb_-}(\w)]\big)$ satisfies the conditions.)
\item[\rm(e)] There exists $\lb^+\in\R$ for which there is an attractive hyperbolic
    $\tau_{\lb_+}$-copy of $\W$ $\{\mb_{\lb^+}\}$ and a $\tau_{\lb^+}$-invariant
    compact set $\mK^+$ projecting onto $\W$
    which does not contain any hyperbolic $\tau_{\lb^+}$-copy of $\W$ and
    with $\{\mb_{\lb^+}\}>\mK^+$. (And, if so, $\mb_{\lb^+}=\muk_{\lb^+}$,
    $\lb^+$ is the upper bifurcation point described by Theorem {\rm\ref{th:3Dbifur}},
    and $\mK^+:=\bigcup_{\w\in\W}\big(\{\w\}\times
    [\ml_{\lb^+}(\w),\mm_{\lb^+}(\w)]\big)$ satisfies the conditions.)
\end{itemize}
\end{teor}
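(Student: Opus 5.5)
The plan is to prove the cycle (a)$\Rightarrow$(c)$\Rightarrow$(b)$\Rightarrow$(a), and then (a)$\Leftrightarrow$(d) and (a)$\Leftrightarrow$(e). The parenthetical identifications will follow from Remark~\ref{rm:3pre}.3 and Theorem~\ref{th:3Dbifur}. The implication (a)$\Rightarrow$(c) is trivial, and in this case Remark~\ref{rm:3pre}.3 identifies the outer copies as $\{\ml_{\lb_0}\}$ and $\{\muk_{\lb_0}\}$, separated by $\{\mm_{\lb_0}\}$.

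For (c)$\Rightarrow$(b), let $\{\mb_1\}<\{\mb_2\}$ be disjoint hyperbolic copies, uniformly separated by the connectedness of $\W$. If one of them is repulsive we are done, so assume both are attractive. We then apply Proposition~\ref{prop:2bocata} in its reversed form, with $\mb_2$ playing the role of the subequilibrium and $\mb_1$ that of the superequilibrium (both are equilibria, hence both kinds of semiequilibria). This gives a maximal compact invariant set of the form of Theorem~\ref{th:3copia}(b) between them. By the last assertion of that proposition it contains a compact invariant $\tilde\mK$ disjoint from both graphs. We thus obtain three disjoint ordered compact invariant sets $\{\mb_1\}<\tilde\mK'<\{\mb_2\}$ projecting onto $\W$, where $\tilde\mK'$ is obtained by applying the proposition on both sides. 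Remark~\ref{rm:3pre}.3 then forces all three to be hyperbolic copies, with the middle one repulsive. This gives (b) and also the parenthetical claim in (c).

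For (b)$\Rightarrow$(a), let $\{\mn\}$ be a repulsive copy. Note that $\mn\le\muk_{\lb_0}$ and $\mn\ge\ml_{\lb_0}$, since $\{\mn\}\subseteq\mA_{\lb_0}$. Take the region between $\{\mn\}$ and $\mK^u_{\lb_0}$. Since $\mn$ is a repulsive copy and the upper boundary of $\mA$ is an equilibrium, Proposition~\ref{prop:2bocata} (with $\mb_1=\mn$ repulsive and $\mb_2=\mn+\rho$ pushed forward, or more simply arguing with the flow above $\mn$) yields a compact invariant set strictly above $\{\mn\}$. Symmetrically we obtain one strictly below it, so there are three disjoint ordered invariant compact sets. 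Remark~\ref{rm:3pre}.3 then makes them hyperbolic copies, and uniqueness of the middle repulsive one gives $\mn=\mm_{\lb_0}$. The delicate point is that $\mn$ might coincide with $\muk_{\lb_0}$. This is excluded because points just above a repulsive copy move away from it in forward time, yet stay bounded by coercivity \ref{d2}; their $\omega$-limit therefore gives an invariant compact set above $\{\mn\}$, and $\{\mn\}$ cannot be the top of $\mA_{\lb_0}$. I expect this step to be the main obstacle. I would handle it with the uniform-stability radius at $-\infty$: the segment $\{\mn+\rho/2\}$ lies outside the backward basin at positive distance, and the forward images $\tau_t$ of this graph remain at distance at least $\rho/(2k)$ from $\{\mn\}$.

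For (a)$\Rightarrow$(d) and (a)$\Rightarrow$(e), we invoke Theorem~\ref{th:3Dbifur}(ii). At $\lb_-$, $\ml_{\lb_-}$ is still an attractive copy, while $\muk_{\lb_-}$ and $\mm_{\lb_-}$ are not uniformly separated. Hence $\mK_-$ defined as in the statement is compact and invariant (it is the maximal set between the semicontinuous equilibria, via Proposition~\ref{prop:2bocata}). It contains no hyperbolic copy, since otherwise, with $\{\ml_{\lb_-}\}$, we would have two disjoint copies, hence three by (c), and the persistence Theorem~\ref{th:3persistence} would extend the branch $\muk$ through $\lb_-$, contradicting (ii). For the converses (d)$\Rightarrow$(a) and (e)$\Rightarrow$(a), consider $\{\mb\}<\mK_-$. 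Since $\mb$ is attractive, persistence (Theorem~\ref{th:3persistence}) gives $\lb$ slightly larger with an attractive copy $\mb_\lb$ near $\mb$. Monotonicity of $\muk_\lb$ (Remark~\ref{rm:3pre}.1) keeps $\mA_\lb$ extending above $\mK_-$, which lies at positive distance from $\mb$. Proposition~\ref{prop:2bocata} then yields, for $\lb$ close to $\lb_-$, an invariant set above $\{\mb_\lb\}$ and disjoint from it. Using \ref{d3}--\ref{d4} via Remark~\ref{rm:3pre}.3, together with the fact that the upper equilibrium $\muk_\lb$ is attractive for $\lb$ close to $\lb_-$, we obtain two disjoint copies and hence (c). Finally, the identification of $\lb_-$ with the bifurcation point of Theorem~\ref{th:3Dbifur} follows from (ii) there: $\mK_-$ containing no hyperbolic copy forces $\muk_{\lb_-}$ to be nonhyperbolic.
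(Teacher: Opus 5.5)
\textbf{Gap 1: (c)$\Rightarrow$(b).} This step produces nothing new. You feed Proposition~\ref{prop:2bocata} the equilibria $\mb_1<\mb_2$ themselves. Then the backward-pushed maps $(\mb_i)_s$ in its proof do not depend on $s$, and the maximal set is just the closed band between the two graphs. The last assertion only gives a compact invariant set avoiding the graph of \emph{one} attractive $\mb_i$. By its construction, that set is the closure of the graph of the limit of the other semiequilibrium, i.e.\ the other copy itself. So ``applying it on both sides'' returns $\{\mb_1\}$ and $\{\mb_2\}$, never a middle set disjoint from both, and Remark~\ref{rm:3pre}.3 cannot be invoked.

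The missing idea is to use the boundary between the two basins of attraction. Set $\mm_-(\w):=\sup\{x\,|\;v(t,\w,x)-\mb_1(\wt)\to0\text{ as }t\to\infty\}$ and $\mm^+(\w):=\inf\{x\,|\;\mb_2(\wt)-v(t,\w,x)\to0\text{ as }t\to\infty\}$. The radius of uniform stability gives $\mb_1+\rho\le\mm_-\le\mm^+\le\mb_2-\rho$. These maps are lower and upper semicontinuous equilibria, so $\{(\w,x)\,|\;\mm_-(\w)\le x\le\mm^+(\w)\}$ is a compact invariant set strictly between the two copies. Remark~\ref{rm:3pre}.3 then gives (a) directly; this is the paper's argument.

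\textbf{Gap 2: (d)$\Rightarrow$(a) and (e)$\Rightarrow$(a).} You obtain only one invariant set above $\{\mb_\lb\}$. You then appeal to ``the fact that $\muk_\lb$ is attractive for $\lb$ close to $\lb_-$''. That fact is a consequence of Theorem~\ref{th:3Dbifur}(ii), i.e.\ of (a), so the argument is circular. With only two disjoint invariant sets, Remark~\ref{rm:3pre}.3 gives nothing.

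The observation you are missing is that, for $\lb>\lb_-$, a $\tau_{\lb_-}$-invariant graph becomes a \emph{strong} $\tau_\lb$-subequilibrium. Suppose $\mK_-$ is the graph of a continuous $\mc$. Then $\mc'<\mh(\cdot,\mc)+\lb$, and you can apply Proposition~\ref{prop:2bocata} on both sides of $\mc$:
\begin{itemize}
\item below $\mc$, against the persistent attractive copy $\mb_\lb$, which stays at positive distance below $\mc$ for $\lb$ close to $\lb_-$;
\item above $\mc$, against $\muk_\lb$ or a large constant superequilibrium.
\end{itemize}
This gives invariant compact sets strictly below and strictly above the graph of $\mc$. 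Together with $\{\mb_\lb\}$ these are three disjoint ordered sets, and Remark~\ref{rm:3pre}.3 yields (a).

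For a general $\mK_-$ you cannot simply use its boundary equilibria. The proposition needs a lower semicontinuous subequilibrium to push a set above it, and an upper semicontinuous one to push a set below it. You would therefore get one set strictly above $\ml_{\mK_-}$ and another strictly below $\muk_{\mK_-}$, and these need not be disjoint. The paper handles this by passing to the skewproduct over the new base $(\mK_-,\tau_{\lb_-})$, on which $\mK_-$ becomes a continuous copy of the base. It then checks that the resulting hyperbolic copies depend only on $\w$.

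\textbf{Remaining steps.} Once (c)$\Rightarrow$(a) is fixed, (a)$\Rightarrow$(d),(e) goes through as you wrote it. Your (b)$\Rightarrow$(a) via forward images of the graph of $\mn+\rho/2$ also works. A simpler route is to note that points within the repulsion radius of $\{\mn_{\lb_0}\}$ have bounded orbits in both time directions, hence lie in $\mA_{\lb_0}$.
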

\begin{proof}
Theorem \ref{th:3Dbifur} and Remark \ref{rm:3pre}.3 show that (a) implies (b), (c), (d) and (e).
Let us check the converse assertions.
\smallskip\par
(b)$\;\Rightarrow\;$(a) Recall that conditions \ref{d1} and \ref{d2} ensure that
any forward $\tau_\lb$-semiorbit is globally defined and bounded (see Remark \ref{rm:3pre}.1).
According to Definition \ref{def:unifexpstable},
there exists $\delta>0$ such that if $(\w,x)\in\WR$ and
$|x-\mn_{\lb_0}(\w)|\le\delta$, then also the backward semiorbit
$\{(\wt,v_{\lb_0}(t,\w,x))\,|\;t\le 0\}$ is globally defined and bounded. So,
$\ml_{\lb_0}(\w)+\delta\le\mn_{\lb_0}(\w)\le\muk_{\lb_0}(\w)-\delta$ for all $\w\in\W$. It follows
easily that the compact sets $\mK^l_{\lb_ 0}$, $\{\mn_{\lb_0}\}$ and $\mK^u_{\lb_ 0}$ (see
Remark \ref{rm:3pre}.2) are disjoint, with $\mK^l_{\lb_ 0}<\{\mn_{\lb_0}\}<\mK^u_{\lb_ 0}$. So,
(a) follows from Remark \ref{rm:3pre}.3, which also ensures that $\mn_{\lb_0}=\mm_{\lb_0}$.
\smallskip\par
(c)$\;\Rightarrow\;$(a) If one of the hyperbolic copies of $\W$ is repulsive, then (b) holds,
and therefore (a) also holds. So, we assume that both of them, say $\{\tml_{\lb_0}\}$ and
$\{\tmuk_{\lb_0}\}$, are attractive, observe that there is no restriction in assuming
that $\tml_{\lb_0}<\tmuk_{\lb_0}$, and call
$\mm_-(\w):=\sup\{x\in\R\,|\;\lim_{t\to\infty}|v_{\lb_0}(t,\w,x)-\tml_{\lb_0}(\wt)|=0\}$ and
$\mm^+(\w):=\inf\{x\in\R\,|\;\lim_{t\to\infty}|\tmuk_{\lb_0}(\wt)-v_{\lb_0}(t,\w,x)|=0\}$.
Clearly, $\tml<\mm_-\le \mm^+<\tmuk$. By reasoning as in, e.g., \cite[Lemma 4.5]{dno6},
we prove that these maps are bounded $\tau_{\lb_0}$-equilibria,
that $\mm_-$ is lower semicontinuous and that $\mm^+$ is upper semicontinuous.
So, $\mK:=\{(\w,x)\,|\;\mm_-(\w)\le x\le\,\mm^+(\w)\}$ is a compact $\tau_{\lb_0}$-invariant set.
Since $\{\tml_{\lb_0}\}<\mK<\{\tmuk_{\lb_0}\}$, Remark \ref{rm:3pre}.3 shows that (a) holds.
\smallskip\par
(d)$\;\Rightarrow\;$(a) It is easy to check that the assertions in parentheses in (d) follow
from Theorem \ref{th:3Dbifur}, once proved that the described situation ensures (a).
\par
{\sc Step 1}. Let us first prove the assertions assuming that $\mK_-$ is a
$\tau_{\lb_-}$-copy of $\W$; i.e., the graph of a continuous $\tau_{\lb_-}$-equilibrium
$\mc\colon\W\to\R$ with $\mc>\mb_{\lb_-}$.
The lack of hyperbolicity of $\{\mc\}$ will not play a role in this proof.
\par
If $\lb_0>\lb_-$, then: $\mc'(\w)<\mh(\w,\mc(\w))+\lb_0$, and hence
$\mc$ is a continuous strong $\tau_{\lb_0}$-subequilibrium (see Section \ref{sec:2});
and $\muk_{\lb_-}<\muk_{\lb_0}$ (see Remark \ref{rm:3pre}.1),
and hence $\mc\le\muk_{\lb_-}<\muk_{\lb_0}$. On the other hand, Theorem \ref{th:3persistence}
allows us to take $\lb_0>\lb_-$ close enough to guarantee the existence of
an attractive hyperbolic
$\tau_{\lb_0}$-copy of $\W$ $\{\mb_{\lb_0}\}$ close enough to $\{\mb_{\lb_-}\}$
as to ensure that it is below and at strictly positive distance
$\delta_1$ from the compact (not $\tau_{\lb_0}$-invariant) set given by the graph of $\mc$.
So, we have $\mb_{\lb_0}<\mc<\muk_{\lb_0}$.
Proposition \ref{prop:2bocata}
ensures the existence of two $\tau_{\lb_0}$-invariant compact sets $\mK_{\lb_0}^+$ and
$\mK_{\lb_0}^-$ projecting onto $\W$, with $\mK_{\lb_0}^+$ strictly above the graph of $\mc$,
and with $\mK_{\lb_0}^-$ strictly between the graphs of $\mb_{\lb_0}$ and $\mc$.
So, $\{\mb_{\lb_0}\}<\mK_{\lb_0}^-<\mK_{\lb_0}^+$, and hence
Remark \ref{rm:3pre}.3 ensures that (a) holds.
\par
{\sc Step 2}. Now, we work in the general case. For each $\lb\in\R$, we call $\hat\tau_\lb$
the skewproduct flow with base $(\mK_-,\tau_{\lb_-})$ defined on $\mK_-\times\R$ by the family of equations
\[
 x'=\hat\mh(\tau_{\lb_-}(t,\w,y),x)+\lb\,,
\]
with $\hat\mh((\w,y),x):=\mh(\w,x)$. That is,
\[
 \hat\tau_\lb(t,(\w,y),x)=(\tau_{\lb_-}(t,\w,y),\hat v_{\lb}(t,(\w,y),x))=((\wt,v_{\lb_-}(t,\w,y)),v_\lb(t,\w,x))\,.
\]
We also define $\hat\mb_{\lb_-}\colon\mK_-\to\R$ by $\hat\mb_{\lb_-}(\w,y):=\mb_{\lb_-}(\w)$.
Let us check that $\{\hat\mb_{\lb_-}\}$ is an attractive $\hat\tau_{\lb_-}$-hyperbolic
copy of $\W$: $\hat\mb_{\lb_-}$ is obviously continuous; $\hat\mb_{\lb_-}(\tau_{\lb_-}(t,\w,y))=
\mb_{\lb_-}(\wt)=v_{\lb_-}(t,\w,\mb_{\lb_-}(\w))=\hat v_{\lb_-}(t,(\w,y),\hat\mb_{\lb_-}(\w,y))$, so it is
a $\hat\tau_{\lb_-}$-equilibrium; and the equalities $|x-\hat\mb_{\lb_-}(\w,y)|=|x-\mb_{\lb_-}(\w)|$ and
$|\hat v_{\lb_-}(t,(\w,y),x)-\hat\mb_{\lb_-}(\tau_{\lb_-}(t,\w,y)|=|v_{\lb_-}(t,\w,x)-\mb_{\lb_-}(\wt)|$
allow us to deduce the uniform exponential stability at $+\infty$
of $\{\hat\mb_{\lb_-}\}$ from that of $\{\mb_{\lb_-}\}$.
It is also clear that $\hat\mK_-:=\{((\w,y),y)\,|\;(\w,y)\in\mK_-\}$ is a $\hat\tau_{\lb_-}$-copy of $\W$
and that $\hat\mK_->\{\hat\mb_{\lb_-}\}$.
So, we are in the situation of {\sc step 1}, which in particular proves that if $\lb_0>\lb_-$ is
close enough, then the upper and lower equilibria of the global $\hat\tau_{\lb_0}$-attractor,
$\{\hat\muk_{\lb_0}\}$ and $\{\hat\ml_{\lb_0}\}$ are hyperbolic $\hat\tau_{\lb_0}$-copies of
$\mK_-$. The same arguments used for $\mb_{\lb_-}$ show that
$\hat\muk_{\lb_0}(\w,y)=\muk_{\lb_0}(\w)$ and $\hat\ml_{\lb_0}(\w,y)=\ml_{\lb_0}(\w)$
for all $(\w,y)\in\mK_-$,
and that $\{\muk_{\lb_0}\}$ and $\{\ml_{\lb_0}\}$ are attractive hyperbolic
$\tau_{\lb_0}$-copies of $\W$. Hence, (c) holds, and so (a) follows.
\smallskip\par
(e)$\;\Rightarrow\;$(a) Arguments similar to the previous ones complete the proof.
\end{proof}
An easy consequence of the previous theorem will be often used:
\begin{coro}\label{coro:3noS}
Assume that the family \eqref{eq:3hlb} does not define an S-shaped global bifurcation
diagram, and take any $\lb\in\R$. If a hyperbolic $\tau_\lb$-copy of $\W$ exists, then it is
attractive and not strictly above or below any other compact
invariant set projecting onto $\W$.
\end{coro}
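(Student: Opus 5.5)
The corollary follows from Theorem \ref{th:3casosS} by contraposition. Since the family \eqref{eq:3hlb} does not define an S-shaped diagram, no $\lb_0\in\R$ satisfies condition (a) of that theorem. Because (a)--(e) are equivalent, conditions (b), (c), (d) and (e) also fail for every value of the parameter. We fix $\lb\in\R$ and a hyperbolic $\tau_\lb$-copy of $\W$, $\{\mb\}$, and show that each of the three claims would otherwise produce one of these forbidden configurations.

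\emph{Attractivity.} If $\{\mb\}$ were repulsive, condition (b) would hold at $\lb_0=\lb$. Hence $\{\mb\}$ is attractive.

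\emph{Not below another set.} Suppose $\mK$ is a compact $\tau_\lb$-invariant set projecting onto $\W$ with $\{\mb\}<\mK$. We split into two cases.
\begin{itemize}
\item If $\mK$ contains no hyperbolic $\tau_\lb$-copy of $\W$, then the pair $(\{\mb\},\mK)$ is exactly condition (d) with $\lb_-=\lb$.
\item If $\mK$ contains a hyperbolic $\tau_\lb$-copy $\{\mc\}$ of $\W$, then $\{\mb\}<\{\mc\}$. These are two ordered, hence disjoint, hyperbolic copies of $\W$, so condition (c) holds at $\lb$.
\end{itemize}
Either way we reach a contradiction.

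\emph{Not above another set.} The case $\{\mb\}>\mK$ is symmetric. One uses condition (e) in the first subcase and condition (c) in the second.

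The only point needing care is that ``strictly above or below'' is understood in the ordering sense of Section~\ref{subsec:22}, which is precisely the sense used in (d) and (e), so the hypotheses match verbatim. Beyond that the argument is bookkeeping, and I expect no real obstacle.
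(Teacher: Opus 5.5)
Your proposal is correct and is essentially the argument the paper intends: the paper gives no written proof and calls the corollary an easy consequence of Theorem~\ref{th:3casosS}. Your contrapositive use of (b) for attractivity, and of (c) together with (d) or (e) for the ordering claims, is exactly that consequence spelled out.
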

\section{Loss-of-hyperbolicity and jump bifurcation points}\label{sec:4}
From now on, we also assume that the flow $(\W,\sigma)$ is transitive:
see Section \ref{sec:2}.
As seen in Section \ref{subsec:2hull}, this condition, which slightly simplifies the description
that follows, is fulfilled when $\W$ is the hull of the map defining the
right-hand side of the differential equation (see Section \ref{subsec:2hull}).
So, as said in the Introduction,
the assumption is consistent with our main objective,
which is the analysis of a particular nonautonomous process,
and for which the hull construction is an essential tool.

Recall that we are working with the parametric family of ODEs
\begin{equation}\label{eq:4hlb-w_general}
 x'=\mh(\wt,x)+\lb\,,\quad\w\in\W
\end{equation}
and with the parametric family of skewproduct flows $\tau_\lb$ on $\W\times\R$
under conditions \ref{d1}, \ref{d2}, \ref{d3} and \ref{d4} on $\mh$.
Recall also that our bifurcation analysis is based on the existence of endpoints for
branches of hyperbolic copies of $\W$: see Definition \ref{def:3bifloss}.
As we will see below, an additional condition on $\mh$ ensures that, if $|\lb|$ is large enough,
then the maps $\ml_\lb$ and $\muk_\lb$ delimiting the attractor
$\mA_\lb$ (see \eqref{def:3ul}) coincide and define an attractive hyperbolic $\tau_\lb$-copy
of $\W$ (which is hence the whole global attractor). There are cases
(including very simple autonomous examples) in which $\{\ml_\lb\}$ (or $\{\muk_\lb\}$)
is still a copy of the base beyond the first (or last) value of the parameter
at which $\ml_\lb\ne\muk_\lb$. The relative position of the values $\lb_u\ge-\infty$
and $\lb_l\le\infty$ delimiting the maximal positive and negative halflines (maybe the
whole line) at which $\{\ml_\lb\}$ and $\{\muk_\lb\}$ are hyperbolic attractive copies of $\W$
will determine three quite different possibilities, and describing some
of their characteristics is the first goal of this section. The second one, carried
out in Subsection \ref{subsec:jumpbifurcations}, is
to define \emph{jump bifurcation} and to establish conditions under which
$\lb_l$ and $\lb_u$ are the only bifurcation points at which such bifurcations can occur.
Finally, in Section \ref{subsec:4critical}, we will describe the first example
of a critical transition in this article, which, like the others, is related to a jump bifurcation.
In this case, the bifurcation is one of the simplest that can occur.

The additional condition on $\mh$ is
\begin{enumerate}[leftmargin=30pt,label=\rm{\bf{d\arabic*}}]
\addtocounter{enumi}{4}
\item\label{d5} $\lim_{x\to\pm\infty}\mh_x(\w,x)=-\infty$ uniformly on a Borel subset
$\W_0\subseteq\W$ with $m(\W_0)=1$ for all $m\in\merg$.
\end{enumerate}
The five conditions are assumed without further mention in this section.
Note that the restriction of $\mh$ to any set
$\bar\W\times\R$, where $\bar\W$ is a connected compact $\sigma$-invariant
subset of $\W$, also satisfies \ref{d1}, \ref{d2}, \ref{d3}, \ref{d4} and \ref{d5}.
Note also that if $\{\mb_\lb\}$ is a hyperbolic $\tau_\lb$-copy of $\W$,
then $\{\mb_\lb|_{\bar\W}\}$ is a hyperbolic $\tau_\lb|_{\bar\W\times\R}$-copy of
$\bar\W$. Of course, the opposite is not true.
We will use these facts often, without mention.

We point out here that
the classical examples of maps satisfying \ref{d2}, \ref{d3} and \ref{d4}
also satisfy \ref{d5}. In fact, \ref{d5} holds if we replace \ref{d2} with
$\lim_{x\to\pm\infty}\mh(\w,x)/x=-\infty$ uniformly on $\W$, as proved in
\cite[Proposition 3.5]{duen}. Condition \ref{d5} is added to formulate the next
result, which justifies the
first statement in this section, and can be proved reasoning as in \cite[Theorem 5.5]{dno1}.
Transitivity is not required. Recall the shape \eqref{def:3ul}
of the global attractor $\mA_\lb$, and that the closure of the $\sigma$-orbit of $\w$
is denoted by $\W_\w$.
\begin{teor}\label{th:4shape}
If $|\lb|$ is large enough, then $\mA_\lb$ is an attractive hyperbolic $\tau_\lb$-copy of $\W$:
$\mA_\lb=\{\ml_\lb\}=\{\muk_\lb\}$.
\end{teor}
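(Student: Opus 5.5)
Treat $\lb\to+\infty$; the case $\lb\to-\infty$ is symmetric after the change $x\mapsto-x$, which preserves \ref{d1}--\ref{d5} once $\lb$ is replaced by $-\lb$. The idea is to show that for large $\lb$ the attractor lies in the region where $\mh_x$ is very negative along ergodic averages. Its upper Lyapunov exponent is then negative, and Theorem \ref{th:3copia}(b) turns that into hyperbolicity.

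\emph{Localizing the attractor.} By \ref{d2} and continuity of $\mh$ on $\W\times J$ for compact $J$, for every $R>0$ there is $\lb_R$ such that $\mh(\w,x)+\lb>0$ for all $\w\in\W$, $|x|\le R$, $\lb\ge\lb_R$. On the other hand, the constant map $R$ is a strict global lower solution. Also, for $M$ large, $\mh(\w,M)+\lb<0$ by \ref{d2}, so $M$ is an upper solution. Since every globally bounded orbit must stay where the vector field can vanish along its closure, and the attractor is the set of globally bounded orbits (Remark \ref{rm:3pre}.1), we get $\ml_\lb\ge R$. Indeed, a backward orbit with $x\le R$ at some time would satisfy $x'>0$ on $\{x\le R\}$. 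Moreover, by coercivity it would decrease without bound backward in time (the lower bound for $x'$ is uniform on the compact set $\{-K\le x\le R\}$, and \ref{d2} controls $x<-K$). Hence $\mA_\lb\subset\W\times[R,\infty)$ for $\lb\ge\lb_R$.

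\emph{Negative exponent.} By \ref{d5}, for the given $\W_0$ with full measure for every ergodic $m$, there is $R$ with $\mh_x(\w,x)\le-1$ for $\w\in\W_0$ and $x\ge R$. The upper Lyapunov exponent of the compact invariant set $\mA_\lb$ equals
\[
\sup\Big\{\int\mh_x(\w,x)\,d\mu\;\Big|\;\mu\ \text{ergodic on}\ \mA_\lb\Big\}
\]
(as recalled from \cite[Section 2.1]{dno5}). Every such $\mu$ projects onto an ergodic $m$ on $\W$, so $\mu$ is concentrated on $(\W_0\times\R)\cap\mA_\lb$, where $\mh_x\le-1$. Hence the upper exponent is at most $-1<0$. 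Since $\mA_\lb$ has the form of Theorem \ref{th:3copia}(b) by \eqref{def:3ul}, that theorem says $\mA_\lb$ is an attractive hyperbolic copy of $\W$. This forces $\ml_\lb=\muk_\lb$ and $\mA_\lb=\{\ml_\lb\}=\{\muk_\lb\}$.

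\emph{Main obstacle.} The delicate point is the uniformity in the localization step. We need $\ml_\lb\ge R$ on all of $\W$, not just on a large set, so that every invariant measure on $\mA_\lb$ sees only the region $x\ge R$. The argument above handles this: it uses only \ref{d2} and continuity, both of which are uniform on $\W$. The measure-theoretic condition \ref{d5} is then needed only $\mu$-almost everywhere, which is exactly what the Lyapunov exponent computation requires. Transitivity of $\W$ is never used.
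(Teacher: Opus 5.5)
Your proof is correct and follows essentially the route the paper indicates when it says the result can be proved reasoning as in \cite[Theorem 5.5]{dno1}. For large $|\lb|$, \ref{d2} and comparison push $\mA_\lb$ into the region $|x|\ge R$, where \ref{d5} gives $\mh_x\le -1$ on a set of full measure for every ergodic measure. Hence the upper Lyapunov exponent of $\mA_\lb$ is negative, Theorem~\ref{th:3copia}(b) makes $\mA_\lb$ an attractive hyperbolic copy of $\W$, and, as the paper also notes, transitivity is not needed.
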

So, for each $\w\in\W$, the values
\begin{equation}\label{def:4lb-ul}
\begin{split}
\lb_l(\w)&:=\sup\{\lb\in\R\,|\;\{\ml_{\rho}|_{\W_{\w}}\}\text{ is a $\tau_\rho$-hyperbolic
copy of }\W_{\w}\text{ for all }\rho<\lb\}\,,\\
\lb_u(\w)&:=\inf\{\lb\in\R\,|\;\{\muk_{\rho}|_{\W_{\w}}\}\text{ is a $\tau_\rho$-hyperbolic
copy of }\W_{\w}\text{ for all }\rho>\lb\}
\end{split}
\end{equation}
are well defined and satisfy $-\infty<\lb_l(\w)\le\infty$ and
$-\infty\le \lb_u(\w)<\infty$, with
$\lb_l(\w_1)=\lb_l(\w_2)$ and $\lb_u(\w_1)=\lb_u(\w_2)$ if $\W_{\w_1}=\W_{\w_2}$.
It is important to recall that,
if $\{\ml_\lb\}$ or $\{\muk_{\lb}\}$ is a hyperbolic $\tau_\lb$-copy of $\W$,
then it is attractive. This assertion follows easily from Theorem \ref{th:3casosS}(b).
The values corresponding to any $\bar\w$ in the residual subset
of $\W$ of points with dense $\sigma$-orbit (see again Section \ref{sec:2})
are those special values we referred to at the beginning of this section.
We denote them as
\begin{equation}\label{def:4lb*}
 \text{$\lb_l^*:=\lb_l(\bar\w)\;$ and $\;\lb_u^*:=\lb_u(\bar\w)\;$ for any $\;\bar\w\in\W\;$
 with dense $\sigma$-orbit}\,.
\end{equation}
Proposition \ref{prop:4loss} shows their relevance in the global dynamics.
We point out that
all the results established from now on for $\W$, $\lb_l^*$ and $\lb_u^*$ can be
rewritten for $\W_\w$, $\lb_l(\w)$ and $\lb_u(\w)$ for any $\w\in\W$.
\begin{prop}\label{prop:4loss}
The points $\lb_l^*$ and $\lb_u^*$ are loss-of-hyperbolicity bifurcation points
for \eqref{eq:4hlb-w_general} if they belong to $\R$.
\end{prop}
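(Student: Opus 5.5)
We argue for $\lb_l^*$; the case of $\lb_u^*$ is symmetric, obtained by interchanging the roles of $\ml_\lb$ and $\muk_\lb$ and of left and right endpoints. Assume $\lb_l^*\in\R$ and fix $\bar\w$ with dense $\sigma$-orbit, so that $\W_{\bar\w}=\W$ and $\lb_l^*=\lb_l(\bar\w)$. By \eqref{def:4lb-ul}, $\{\ml_\rho\}$ is a hyperbolic $\tau_\rho$-copy of $\W$ for every $\rho<\lb_l^*$. It is attractive, as observed after \eqref{def:4lb-ul} via Theorem \ref{th:3casosS}(b). Our candidate branch on $\mI=(-\infty,\lb_l^*)$ is $\lb\mapsto\ml_\lb$, and we must show two things: that it is continuous into $C(\W,\R)$, so that it is a branch in the sense of Definition \ref{def:3branch}, and that it cannot be extended through $\lb_l^*$.

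\emph{Continuity.} Fix $\bar\lb<\lb_l^*$. Theorem \ref{th:3persistence} gives a continuous branch $\lb\mapsto\mb_\lb$ of attractive copies near $\bar\lb$, with $\mb_{\bar\lb}=\ml_{\bar\lb}$ and a common radius $\rho$ of uniform stability. We must show $\mb_\lb=\ml_\lb$ for $\lb$ close to $\bar\lb$. For $\lb$ close to $\bar\lb$ both are attractive hyperbolic copies of $\W$. If they differed, they would be two distinct hyperbolic copies. Each has negative upper Lyapunov exponent, so they cannot cross, and by connectedness of $\W$ they would be disjoint; hence two disjoint hyperbolic copies. By Theorem \ref{th:3casosS}(c) this yields three copies and the S-shape of Theorem \ref{th:3Dbifur}. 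In that case $\ml_\lb$ and $\mb_\lb$ would be the lower and upper copies, which are separated by the repulsive one, and Theorem \ref{th:3Dbifur}(i) already gives continuity of $\lb\mapsto\ml_\lb$ on $(-\infty,\lb^+)$ by (ii). Alternatively, the closeness of $\mb_\lb$ to $\ml_{\bar\lb}$ within $\rho$ forces $\mb_\lb=\ml_\lb$, since the branch through $\ml_{\bar\lb}$ from Theorem \ref{th:3Dbifur}(ii) is itself continuous. So in either case the branch coincides locally with $\ml_\lb$, and continuity follows.

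\emph{Non-extendability.} Suppose the branch extends to a continuous branch on $(-\infty,\lb_l^*+\delta)$, say $\lb\mapsto\mb_\lb$ with $\mb_\lb=\ml_\lb$ for $\lb<\lb_l^*$. Then $\mb_{\lb_l^*}$ is an attractive hyperbolic copy, and we aim to show $\mb_{\lb}=\ml_{\lb}$ for all $\lb\in[\lb_l^*,\lb_l^*+\delta)$. This would contradict the definition of $\lb_l^*$ as a supremum.

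First, $\mb_{\lb_l^*}\ge\ml_{\lb_l^*}$ holds trivially, since $\mA_{\lb_l^*}$ contains every copy. For the reverse inequality we use left-continuity of $\lb\mapsto\ml_\lb(\w)$ from Remark \ref{rm:3pre}.1: $\ml_{\lb_l^*}(\w)=\lim_{\lb\to\lb_l^{*-}}\ml_\lb(\w)=\lim\mb_\lb(\w)=\mb_{\lb_l^*}(\w)$, so the two coincide at $\lb_l^*$. For $\lb$ slightly greater than $\lb_l^*$ the argument is harder. If $\mb_\lb\ne\ml_\lb$, then $\mK^l_\lb$ lies below or touches $\{\mb_\lb\}$. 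Using the radius $\rho$ of uniform stability, which is common along the branch, we would show that points of $\mA_\lb$ below $\mb_\lb$ but within $\rho$ of it converge to $\mb_\lb$, so $\mK^l_\lb$ must be strictly below $\{\mb_\lb\}$. Then Corollary \ref{coro:3noS}, or the S-shaped case handled via Theorem \ref{th:3Dbifur}(ii), gives a contradiction. In the S-shaped case $\lb_l^*=\lb^+$, and (ii) states directly that $\ml_{\lb^+}$ is not hyperbolic, which already contradicts the equality $\mb_{\lb_l^*}=\ml_{\lb_l^*}$ established above.

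The main obstacle is this last step: ruling out that the extended branch detaches from $\ml_\lb$ immediately to the right of $\lb_l^*$. The key tool is the equality at $\lb_l^*$ itself combined with the uniform radius $\rho$. With these, for $\lb$ close to $\lb_l^*$ one has $\ml_\lb\ge\ml_{\lb_l^*}=\mb_{\lb_l^*}$, which lies within $\rho/2$ of $\mb_\lb$, using the monotonicity of $\lb\mapsto\ml_\lb$ from Remark \ref{rm:3pre}.1. Hence $\ml_\lb$ lies in the basin of $\mb_\lb$ and is invariant, forcing $\ml_\lb=\mb_\lb$.
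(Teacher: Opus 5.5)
\textbf{Verdict.} Your non-extendability argument in the final paragraph is sound. The continuity step, however, has a genuine gap.

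\textbf{The gap.} You claim that if $\mb_\lb\ne\ml_\lb$, the two attractive hyperbolic copies ``cannot cross, and by connectedness of $\W$ they would be disjoint''. Non-crossing is automatic here, since $\ml_\lb\le\mb_\lb$ because $\{\mb_\lb\}\subseteq\mA_\lb$; no Lyapunov exponent is needed. But non-crossing does not give disjointness. Two distinct attractive hyperbolic copies can touch along a proper invariant subset of a transitive, non-minimal base. The paper exhibits exactly this in Theorem~\ref{teor:4p1}(c): in Example~\ref{ex:4p1c} at $\lb=0$, $\{\ml_0\}$ and $\{\muk_0\}$ are distinct attractive hyperbolic copies of $\W$, yet $\ml_0(\w^\upomega)=\muk_0(\w^\upomega)=0$.

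So you cannot pass to Theorem~\ref{th:3casosS}(c). Both of your justifications of continuity, including the ``alternatively'' clause, rest on Theorem~\ref{th:3Dbifur}, which covers only the S-shaped case. That case is already handled by that theorem. Definition~\ref{def:3branch} requires $\lb\mapsto\ml_\lb$ to be continuous into $C(\W,\R)$. Hence the branch on $(-\infty,\lb_l^*)$ is not established in the situations where the proposition has real content: Theorem~\ref{teor:4p1}(c), and cases \ref{p2} and \ref{p3}. The same touching phenomenon invalidates your middle-paragraph claim that $\mK^l_\lb$ must lie strictly below $\{\mb_\lb\}$. Your final paragraph does not need that claim.

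\textbf{How to repair it.} Your own tool suffices. Fix $\bar\lb<\lb_l^*$ and let $\mb_\lb$ be the branch of Theorem~\ref{th:3persistence} with $\mb_{\bar\lb}=\ml_{\bar\lb}$.
\begin{itemize}
\item For $\lb>\bar\lb$, Remark~\ref{rm:3pre}.1 gives $\ml_{\bar\lb}\le\ml_\lb\le\mb_\lb$. Hence $\sup_{\W}|\ml_\lb-\ml_{\bar\lb}|\le\sup_{\W}|\mb_\lb-\mb_{\bar\lb}|\to0$, which is right-continuity.
\item For $\lb<\bar\lb$, the continuous maps $\ml_\lb$ increase pointwise to the continuous map $\ml_{\bar\lb}$, by left-continuity. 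Dini's theorem makes the convergence uniform, which is left-continuity.
\end{itemize}
Alternatively, simply invoke Lemma~\ref{lema:hypconttapas}, as the paper does.

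In the non-extendability step, take the radius $\rho$ and dichotomy constants $(k,\gamma)$ from Theorem~\ref{th:3persistence} applied at $\lb_l^*$ to the hyperbolic copy $\ml_{\lb_l^*}$. An arbitrary extension of the branch carries no common radius of uniform stability. Then $0\le\mb_\lb-\ml_\lb\le\mb_\lb-\mb_{\lb_l^*}<\rho$ for $\lb>\lb_l^*$ close to $\lb_l^*$. Pulling back along $\w{\cdot}(-t)$ gives $\mb_\lb(\w)-\ml_\lb(\w)\le k\,e^{-\gamma t}\rho$ for all $t\ge0$. Hence $\ml_\lb=\mb_\lb$ is hyperbolic on a right neighbourhood of $\lb_l^*$, contradicting the definition of $\lb_l^*$.

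\textbf{Comparison with the paper.} With this repair your proof is correct and more elementary than the paper's route. The paper derives the proposition from Theorem~\ref{th:3persistence} and Lemma~\ref{lema:hypconttapas}. The relevant half of that lemma is the coincidence of the persistence branch with $\ml_\lb$ to the right of $\bar\lb$. It is proved through minimal subsets, \upalfa-limit sets, \cite[Propositions 2.6 and 2.7]{dno4}, and the S-shaped diagram on minimal sets, and it yields coincidence on the whole persistence interval. Your monotonicity squeeze gives coincidence only on a small neighbourhood, but that is all the proposition requires.
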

We postpone the proof, which relies in the first one of the following two lemmas.
Lemma \ref{lema:hypconttapas} shows that $\muk_\lb$ gives a branch of attractive hyperbolic copies
of $\W$ on $(\lb_u^*,\infty)$ and $\ml_\lb$ gives a branch of attractive
hyperbolic copies of $\W$ on $(-\infty,\lb_l^*)$, and implies the lack
of hyperbolicity of $\muk_{\lb_u^*}$ and $\ml_{\lb_l^*}$ if $\lb_u^*$ and $\lb_l^*$ are finite.
Lemma \ref{lema:infinity} shows that the two values are simultaneously finite or infinite.
We represent by $\W^{\upalpha}_\w\subseteq\W$ and $\W^{\upomega}_\w\subseteq\W$
the \upalfa-limit set and \upomeg-limit set of each point $\w\in\W$ for $\sigma$.
\begin{lema}\label{lema:hypconttapas}
Let $\bar\lb\in\R$ be such that $\{\muk_{\bar\lb}\}$ (resp.~$\{\ml_{\bar\lb}\}$) is an attractive
hyperbolic $\tau_{\bar\lb}$-copy of $\W$. Let $\delta>0$ and
$\mb\colon(\bar\lb-\delta,\bar\lb+\delta)\to C(\W,\R),\,\lb\mapsto\mb_\lb$ be the continuous map
provided by Theorem {\rm\ref{th:3persistence}},
with $\mb_{\bar\lb}=\muk_{\bar\lb}$ (resp.~$\mb_{\bar\lb}=\ml_{\bar\lb}$).
Then, there exists $\delta_1\in(0,\delta]$ such that
$\mb_\lb=\muk_\lb$ (resp.~$\mb_\lb=\ml_\lb$)
for all $\lb\in(\bar\lb-\delta,\bar\lb+\delta_1)$
(resp.~$\lb\in(\bar\lb-\delta_1,\bar\lb+\delta)$. In particular,
the maps $(\lb_u^*,\infty)\to C(\W,\R),\,\lb\mapsto\muk_\lb$ and
$(-\infty,\lb_l^*)\to C(\W,\R),\,\lb\mapsto\ml_\lb$ are continuous and
strictly increasing.
\end{lema}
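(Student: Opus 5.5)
We treat the case of $\muk$; the case of $\ml$ is symmetric, obtained by reversing inequalities and the direction of $\lb$. Since $\lb\mapsto\muk_\lb(\w)$ is strictly increasing and right-continuous (Remark \ref{rm:3pre}.1), the natural plan has three steps. First, show $\mb_\lb\le\muk_\lb$ for all $\lb\in(\bar\lb-\delta,\bar\lb+\delta)$. Second, show $\mb_\lb\ge\muk_\lb$ for $\lb\le\bar\lb$. Third, use uniform closeness of $\mb_\lb$ to $\muk_{\bar\lb}$ together with right-continuity to get the equality for $\lb\in[\bar\lb,\bar\lb+\delta_1)$.

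The first step is immediate: $\{\mb_\lb\}$ is a compact $\tau_\lb$-invariant set, so it lies in $\mA_\lb$, and hence $\mb_\lb\le\muk_\lb$.

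For $\lb<\bar\lb$, note that $\muk_\lb<\muk_{\bar\lb}=\mb_{\bar\lb}$. Suppose $\mb_\lb\ne\muk_\lb$ for some $\lb$ in this range. Then $\{\mb_\lb\}$ lies in $\mA_\lb$, and $\mK^u_\lb$ is a compact invariant set projecting onto $\W$. I will show that $\mK^u_\lb$ and $\{\mb_\lb\}$ are disjoint, so that $\{\mb_\lb\}<\mK^u_\lb$. The key tool is Corollary \ref{coro:3noS}. Either the family has an S-shaped diagram, or it does not. If it does not, an attractive copy cannot lie strictly below another compact invariant set projecting onto $\W$, which is a contradiction. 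If it does, Theorem \ref{th:3Dbifur}(ii) says that $\{\muk_\lb\}$ is a hyperbolic copy exactly for $\lb>\lb_-$. In that case the branches through $\muk_{\bar\lb}$ coincide with $\muk_\lb$ on $(\lb_-,\infty)$, which contains $\bar\lb$; the only other hyperbolic copies are $\ml_\lb$ and $\mm_\lb$, and continuity of the branch at $\bar\lb$ forces $\mb_\lb=\muk_\lb$.

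To obtain the disjointness in the non-S-shaped case, I would instead argue via a domain of attraction. The map $\mb_\lb$ depends continuously on $\lb$, has a uniform stability radius $\rho$ for $\lb$ near $\bar\lb$, and $\mb_\lb\to\muk_{\bar\lb}$ uniformly as $\lb\to\bar\lb$. Consider a point of $\mK^u_\lb$ above the graph of $\mb_\lb$. If it lies within distance $\rho$ of $\mb_\lb$, its forward orbit converges to $\{\mb_\lb\}$. Using monotonicity in $\lb$ and $\muk_\lb\le\muk_{\bar\lb}$, we get $\muk_\lb\le\mb_\lb+\rho$ whenever $\|\mb_\lb-\muk_{\bar\lb}\|<\rho$. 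So $\mK^u_\lb$ lies in the $\rho$-neighbourhood of $\{\mb_\lb\}$ and is attracted to it. Because $\mK^u_\lb$ is invariant, it must then be contained in $\{\mb_\lb\}$, which gives $\muk_\lb\le\mb_\lb$. This yields equality on $(\bar\lb-\delta,\bar\lb]$, shrinking $\delta$ if needed; I expect the stated $\delta$ can be kept by a connectedness argument along the branch.

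For $\lb>\bar\lb$, right-continuity only gives pointwise convergence of $\muk_\lb$ to $\muk_{\bar\lb}$, and this is the main obstacle. I would use upper semicontinuity and compactness, via a Dini-type argument on $\lb\mapsto\muk_\lb$, to obtain $\sup_\w(\muk_\lb-\muk_{\bar\lb})\to0$ as $\lb\to\bar\lb^+$. An alternative is to use the attraction property: every bounded set converges to $\mA_\lb$, while $\mb_\lb+\rho/2$ is a strict upper solution near $\muk_{\bar\lb}$ for $\lb$ close to $\bar\lb$. Once $\muk_\lb\le\mb_\lb+\rho$, the argument of the previous paragraph gives $\muk_\lb=\mb_\lb$ for $\lb\in[\bar\lb,\bar\lb+\delta_1)$.

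The final assertions follow from this. Near every point of $(\lb_u^*,\infty)$ the map $\lb\mapsto\muk_\lb$ coincides with a continuous branch, so it is continuous there. Strict monotonicity is inherited from Remark \ref{rm:3pre}.1, and applying the result on the dense orbit gives it on $(\lb_u^*,\infty)$.
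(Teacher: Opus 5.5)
Your proof is correct, and for $\lb\le\bar\lb$ it takes a genuinely different and more elementary route than the paper.

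The paper argues by contradiction over each minimal set $\mM$. A point $\w_0\in\mM$ with $\muk_{\lb_0}(\w_0)>\mb_{\lb_0}(\w_0)$ yields, via results of \cite{dno4}, two separated invariant sets over $\mM$. Theorems~\ref{th:3casosS} and~\ref{th:3Dbifur} then force an S-shaped diagram over $\mM$, whose branch structure contradicts that assumption. Equality is finally propagated from minimal sets to all of $\W$ through $\alpha$-limit sets and the exponential stability of $\mb_{\lb_0}$.

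You instead use the chain $\mb_\lb\le\muk_\lb\le\muk_{\bar\lb}=\mb_{\bar\lb}$, which follows from monotonicity in $\lb$ (Remark~\ref{rm:3pre}). Once $\|\mb_\lb-\mb_{\bar\lb}\|_\infty<\rho$, the gap $\muk_\lb-\mb_\lb$ lies below the uniform stability radius. The same squeeze $\muk_\lb(\w)-\mb_\lb(\w)\le k\,e^{-\gamma t}\rho$ that the paper uses for $\lb>\bar\lb$ then forces it to vanish. This avoids minimal sets, the S-shaped classification, and any use of d-concavity. The price is that your estimate is only local near $\bar\lb$, whereas the paper's structural argument covers all of $(\bar\lb-\delta,\bar\lb]$ at once. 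For $\lb>\bar\lb$, your Dini argument is exactly the paper's.

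There are three loose ends.

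\begin{enumerate}
\item You should not shrink $\delta$, since the statement fixes it. Your connectedness idea does work, because Theorem~\ref{th:3persistence} gives a single radius $\rho$ along the whole branch. Fix $\lb\in(\bar\lb-\delta,\bar\lb)$. Uniform continuity of the branch on $[\lb,\bar\lb]$ gives a partition $\lb=\lb_N<\dots<\lb_0=\bar\lb$ with $\|\mb_{\lb'}-\mb_{\lb_j}\|_\infty<\rho$ for $\lb'\in[\lb_{j+1},\lb_j]$. If $\muk_{\lb_j}=\mb_{\lb_j}$, then $\mb_{\lb'}\le\muk_{\lb'}\le\mb_{\lb_j}<\mb_{\lb'}+\rho$, so your step gives $\muk_{\lb'}=\mb_{\lb'}$ on $[\lb_{j+1},\lb_j]$. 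Induction on $j$ finishes the argument.

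\item The detour through Corollary~\ref{coro:3noS} should be deleted. The disjointness of $\mK^u_\lb$ and $\{\mb_\lb\}$ that it announces is never proved. A priori these sets may touch over part of $\W$, which is exactly why the paper localizes on minimal and $\alpha$-limit sets. Your direct argument makes this detour unnecessary.

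\item The claim that $\mb_\lb+\rho/2$ is a strict upper solution is unjustified. Hyperbolicity is an averaged condition and gives no pointwise control of $\mh(\w,\mb_\lb(\w)+\rho/2)-\mh(\w,\mb_\lb(\w))$. Keep the Dini route instead.
\end{enumerate}
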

\begin{proof}
Recall that $\R\to\R,\,\lb\mapsto\muk_\lb(\w)$ (resp.~$\R\to\R,\,\lb\mapsto\ml_\lb(\w)$)
is right-continuous
(resp.~left-continuous) and strictly increasing for all $\w\in\W$ (see Remark \ref{rm:3pre}.1).

Let us work in the case of $\muk_\lb$. Note that, by definition of $\muk_\lb$, we have
$\mb_\lb\le \muk_\lb$.
We begin by checking the assertion for $\lb>\bar\lb$. Recall that $\muk_\lb$ is upper
semicontinuous on $\W$, which is compact. Since
$\lb\mapsto\muk_\lb(\w)$ is right-continuous and strictly increasing for all $\w\in\W$,
the pointwise limit as $\lb\to\bar\lb^+$ of $\muk_\lb$ is $\muk_{\bar\lb}$, which
in our case is continuous. So, the extension of Dini's Theorem to the upper semicontinuous case
ensures that $\lim_{\lb\to\bar\lb^+}\muk_\lb(\w)=\muk_{\bar\lb}(\w)$ uniformly on $\W$. Hence,
$\lim_{\lb\to\bar\lb^+}(\muk_\lb(\w)-\mb_\lb(\w))=0$ uniformly on $\W$. We choose $\delta_1\le\delta$
such that $\muk_\lb(\w)-\mb_\lb(\w)<\rho$ for all $\w\in\W$ if $\lb\in[\bar\lb,\bar\lb+\delta_1)$,
with $\rho$ also provided by Theorem \ref{th:3persistence}.
So, for any $\lb\in[\bar\lb,\bar\lb+\delta_1)$, there exists $\gamma_\lb>0$ and $K_\lb\ge 1$ such that
$|\mb_\lb(\w)-\muk_\lb(\w)|\le K_\lb e^{-\gamma_\lb t}|\mb_\lb(\w{\cdot}(-t))-\muk_\lb(\w{\cdot}(-t))|
\le K_\lb e^{-\gamma_\lb t}\rho$ for all $\w\in\W$ and $t\ge 0$, and therefore
$\mb_\lb(\w)=\muk_\lb(\w)$.

Now let us work to the left of $\bar\lb$. Let $\mM\subseteq\W$ be a $\sigma$-minimal set.
Assume for contradiction that there exists $\lb_0\in(\bar\lb-\delta,\bar\lb]$ and $\w_0\in\mM$ such that
$\muk_{\lb_0}(\w_0)>\mb_{\lb_0}(\w_0)$. According to \cite[Propositions 2.7 and 2.6(i)]{dno4},
$\inf_{t\le 0}\big(\muk_{\lb_0}(\w_0{\cdot}t)-\mb_{\lb_0}(\w_0{\cdot}t)\big)>0$. This means
the existence of two separated compact $\tau_{\lb_0}$-invariant sets projecting onto the
\upalfa-limit set of $\w_0$, which is $\mM$, being the
lower one $\{\mb_{\lb_0}|_\mM\}$. So, Theorems \ref{th:3casosS} and \ref{th:3Dbifur}
combined with the minimality of $\mM$ show that the global bifurcation diagram for the family
\eqref{eq:4hlb-w_general} on $\mM$ is S-shaped, with bifurcation points
$(\lb_-)_\mM$ and $(\lb^+)_\mM$ depending only on $\mM$.
Since $\muk_{\bar\lb}|_\mM$ is a hyperbolic copy of $\mM$, there are two possibilities:
either $\bar\lb>(\lb_-)_\mM$, and then the unique continuous branch of
hyperbolic copies of $\mM$ containing $\muk_{\bar\lb}|_\mM$ is
$((\lb_-)_\mM,\infty),\,\lb\mapsto\muk_\lb|_\mM$, which means that
$\muk_\lb|_\mM=\mb_\lb|_\mM$ for all $\lb\in(\bar\lb-\delta,\bar\lb]$;
or $\bar\lb<(\lb_-)_\mM$, and hence $\muk_\lb|_\mM$ coincides with
$\mb_\lb|_\mM$ (and with $\ml_\lb|_\mM$) for all
$\lb\in(\bar\lb-\delta,\bar\lb]$. Both situations contradict
$\muk_{\lb_0}(\w_0)>\mb_{\lb_0}(\w_0)$, so the maps coincide
at the elements of any minimal set.

Finally, let us fix $\lb_0\in(\bar\lb-\delta,\bar\lb]$.
We take any $\w_1\in\W$, a $\sigma$-minimal set $\mM\subseteq\W^{\upalpha}_{\w_1}$, and
a sequence $(t_n)\downarrow-\infty$ such that $\w_1{\cdot}t_n\to\w_0\in\mM$.
The upper semicontinuity of $\muk_{\lb_0}$ and the continuity of $\mb_{\lb_0}$ ensure that
$\muk_{\lb_0}(\w_0)\ge\lim_{n\to\infty}\muk_{\lb_0}(\w_1{\cdot}t_n)\ge
\lim_{n\to\infty}\mb_{\lb_0}(\w_1{\cdot}t_n)
=\mb_{\lb_0}(\w_0)=\muk_{\lb_0}(\w_0)$. So, $\lim_{n\to\infty}\big(\muk_{\lb_0}(\w_1{\cdot}t_n)-
\mb_{\lb_0}(\w_1{\cdot}t_n)\big)=0$, and \cite[Proposition 2.6(i)]{dno4}
yields $\muk_{\lb_0}(\w_1)=\mb_{\lb_0}(\w_1)$. That is, $\muk_{\lb_0}=\mb_{\lb_0}$.

The proof for $\ml_\lb$ is analogous, and the last assertion follows from
the previous ones and Remark \ref{rm:3pre}.1.
\end{proof}

\begin{lema}\label{lema:infinity}
\begin{itemize}[leftmargin=20pt]
\item[{\rm(i)}] $\lb_u^*=-\infty$ if and only if $\lb_l^*=+\infty$, in which case
$\ml_\lb=\muk_\lb$ for all $\lb\in\R$.
\item[{\rm(ii)}] $\lb_u(\w)\le\lb_u^*$ and $\lb_l^*\le\lb_l(\w)$ for
any $\w\in\W$.
\end{itemize}
\end{lema}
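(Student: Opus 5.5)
We plan to handle (i) and (ii) separately, deriving both from Lemma~\ref{lema:hypconttapas}, Theorem~\ref{th:4shape}, and the S-shaped theory of Section~\ref{sec:3}.

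For (i), assume first $\lb_u^*=-\infty$, so $\{\muk_\lb\}$ is an attractive hyperbolic $\tau_\lb$-copy of $\W$ for every $\lb\in\R$. By Theorem~\ref{th:4shape}, $\ml_\lb=\muk_\lb$ for $\lb$ very negative. Let $\lb_1:=\sup\{\lb\,|\;\ml_\rho=\muk_\rho\text{ for all }\rho\le\lb\}$ and suppose, for contradiction, that $\lb_1<\infty$. On $(-\infty,\lb_1)$, $\ml_\lb=\muk_\lb$ is hyperbolic. Since $\lb\mapsto\ml_\lb(\w)$ is left-continuous and $\lb\mapsto\muk_\lb(\w)$ is right-continuous (Remark~\ref{rm:3pre}.1), we get $\ml_{\lb_1}=\lim_{\lb\to\lb_1^-}\muk_\lb\le\muk_{\lb_1}$. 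Now apply Lemma~\ref{lema:hypconttapas} at $\bar\lb=\lb_1$ to the hyperbolic copy $\{\muk_{\lb_1}\}$. It gives a continuous branch $\mb_\lb=\muk_\lb$ on $(\lb_1-\delta,\lb_1+\delta_1)$, which is continuous at $\lb_1$, so $\ml_{\lb_1}=\muk_{\lb_1}$. We then want to push this past $\lb_1$ to contradict the definition of $\lb_1$. Since $\{\ml_{\lb_1}\}=\{\muk_{\lb_1}\}$ is a hyperbolic copy, Lemma~\ref{lema:hypconttapas} in its $\ml$-version gives $\ml_\lb=\mb'_\lb$ on $(\lb_1-\delta_1',\lb_1+\delta')$. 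By uniqueness of the persistent branch through the same hyperbolic copy (Theorem~\ref{th:3persistence} together with the local uniqueness coming from the common radius of uniform stability), $\mb'_\lb=\mb_\lb=\muk_\lb$ near $\lb_1$. Hence $\ml_\lb=\muk_\lb$ slightly to the right of $\lb_1$, which is the desired contradiction. Therefore $\ml_\lb=\muk_\lb$ is a hyperbolic copy for all $\lb$, and $\lb_l^*=+\infty$. The converse implication is symmetric.

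For (ii), fix $\w\in\W$. If $\rho>\lb_u^*$, then $\{\muk_\rho\}$ is a hyperbolic copy of $\W$, so its restriction to $\W_\w$ is a hyperbolic copy of $\W_\w$. The key point is that the upper equilibrium of the restricted attractor is $\muk_\rho|_{\W_\w}$. This holds because the global attractor of the restricted flow is the set of globally bounded orbits over $\W_\w$, which is exactly $\mA_\rho\cap(\W_\w\times\R)$ (Remark~\ref{rm:3pre}.1). It follows that $\lb_u(\w)\le\rho$ for every $\rho>\lb_u^*$, i.e.\ $\lb_u(\w)\le\lb_u^*$. The inequality $\lb_l^*\le\lb_l(\w)$ is proved in the same way, and infinite values are handled trivially.

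The main obstacle is the uniqueness step in (i), namely identifying the persistent branches obtained from Lemma~\ref{lema:hypconttapas} for $\ml$ and for $\muk$. We expect this to follow from the fact that two continuous branches of attractive copies that coincide at $\lb_1$ stay within the common radius of uniform stability $\rho$ for nearby $\lb$. Each then attracts the other, so they coincide, by the same backward-time argument used in the proof of Lemma~\ref{lema:hypconttapas}.
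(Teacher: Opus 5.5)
Your proof is correct and follows essentially the same route as the paper. For (i), you take the supremum of the parameters where $\ml_\lb=\muk_\lb$, combine left-continuity of $\lb\mapsto\ml_\lb$ with continuity of the hyperbolic branch $\lb\mapsto\muk_\lb$ to get $\ml_{\lb_1}=\muk_{\lb_1}$, and use Lemma~\ref{lema:hypconttapas} to push the coincidence past $\lb_1$; for (ii), you restrict hyperbolic copies to $\W_\w$. The uniqueness step you flag is valid as sketched, and it can be bypassed: apply Lemma~\ref{lema:hypconttapas} twice to the single branch given by Theorem~\ref{th:3persistence} through $\ml_{\lb_1}=\muk_{\lb_1}$.
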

\begin{proof} (i) If $\lb_u^*=-\infty$,
then $\{\muk_\lb\}$ is an attractive hyperbolic copy of $\W$ for all $\lb\in\R$,
and Lemma \ref{lema:hypconttapas} shows that
$\R\to C(\W,\R),\,\lb\mapsto\muk_\lb$ is continuous. In addition,
Theorem \ref{th:4shape} shows that
$\bar\lb:=\sup\{\lb\in\R\,|\;\ml_\rho=\muk_\rho\text{ for all }\rho<\lb\}$
satisfies $\bar\lb>-\infty$. Clearly, $\bar\lb\le \lb_l^*$.
We assume for contradiction that $\bar\lb<\infty$. Recall that $\ml_{\bar\lb}$
is the pointwise limit of $\ml_\lb$ as $\lb\uparrow\bar\lb$ (see Remark~\ref{rm:3pre}.1),
and hence $\ml_{\bar\lb}=\muk_{\bar\lb}$. In particular, $\{\ml_{\bar\lb}\}$ is a
hyperbolic copy of $\W$, and hence $\bar\lb<\lb_l^*$. But this
contradicts the definition of $\bar\lb$ and Lemma \ref{lema:hypconttapas}. So,
$\lb_l^*=\bar\lb=\infty$ and hence $\ml_\lb=\muk_\lb$ for all $\lb\in\R$.
The argument is analogous if $\lb_l^*=+\infty$.
\smallskip\par
(ii) As already mentioned, it is clear that
$\mb_\lb|_{\W_\w}$ is a hyperbolic copy of $\W_\w$ if
$\mb_\lb$ is a hyperbolic copy of $\W$. Properties (ii) follow
immediately from this fact and the definitions of $\lb_l(\w)$, $\lb_u(\w)$,
$\lb_l^*$ and $\lb_u^*$.
\end{proof}
\noindent{\em Proof of Proposition {\rm\ref{prop:4loss}}.}~It is
an easy consequence of Theorem \ref{th:3persistence},
Lemma \ref{lema:hypconttapas}, and definitions \eqref{def:4lb-ul}.
\hfill$\Box$\medskip

As mentioned earlier, we classify the bifurcations according
to the relative ordering of $\lb_l^*$ and $\lb_u^*$.
There are three possibilities:
\begin{enumerate}[leftmargin=30pt,label=\rm{\bf{p\arabic*}}]
\item\label{p1} $\lb_u^*<\lb_l^*$\,,
\item\label{p2} $\lb_u^*=\lb_l^*$\,,
\item\label{p3} $\lb_u^*>\lb_l^*$\,.
\end{enumerate}
As said at the beginning of the section, now we will describe some of the general
properties that arise or are precluded in these three cases.
\begin{nota}\label{rm:4p1}
The case $\lb_u^*=-\infty=-\lb_l^*$ fits \ref{p1}, as well as
the S-shaped bifurcation diagram described by Theorem \ref{th:3Dbifur} (with
$-\infty<\lb_u^*=\lb_-<\lb^+=\lb_l^*<\infty$).
The remaining possibilities for case \ref{p1} are described in Theorem \ref{teor:4p1}.
\end{nota}

There are trivial autonomous examples (which will be mentioned later)
of cases \ref{p1} and \ref{p2} and, as we will see in
Section \ref{sec:5}, case \ref{p3} also occurs (although not in the autonomous case, as
the results of \cite[Section 5]{dno1} ensure).
Recall that, in any of the cases,
$\ml_{\lb_l^*}$ and $\muk_{\lb_u^*}$ are not hyperbolic copies of $\W$
when $\lb_l^*,\lb_u^*\in\R$. But, of course, these are not the unique values of $\lb$
at which this happens. In fact, $\ml_\lb$ and $\muk_\lb$ are not necessarily continuous maps on
$\W$. Despite this lack of continuity or hyperbolicity,
the union of their graphs contains any attractive hyperbolic copy of $\W$, if this
one exists:
\begin{prop}\label{prop:4disyuntiva}
Let $\lb\in\R$ and let $\mb_\lb\colon\W\to\R$ define an attractive hyperbolic
$\tau_\lb$-copy of $\W$. Then, for every $\w\in\W$, either $\mb_\lb(\w)=\ml_\lb(\w)$ or
$\mb_\lb(\w)=\muk_\lb(\w)$.
\end{prop}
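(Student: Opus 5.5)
Suppose $\ml_\lb(\w_0)<\mb_\lb(\w_0)<\muk_\lb(\w_0)$ for some $\w_0\in\W$; the goal is a contradiction. The idea is to produce compact $\tau_\lb$-invariant sets projecting onto a minimal subset of $\W$ strictly above and strictly below $\{\mb_\lb\}$, and then invoke Theorem~\ref{th:3casosS}(d)/(e), Corollary~\ref{coro:3noS} and Theorem~\ref{th:3Dbifur} on that minimal set.

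\emph{Step 1: uniform separation along the backward orbit.} Since $\mb_\lb$ is a continuous $\tau_\lb$-equilibrium with $\mb_\lb(\w_0)<\muk_\lb(\w_0)$, \cite[Propositions 2.7 and 2.6(i)]{dno4}, as used in the proof of Lemma~\ref{lema:hypconttapas}, give
\[
\inf_{t\le0}\big(\muk_\lb(\w_0{\cdot}t)-\mb_\lb(\w_0{\cdot}t)\big)>0
\quad\text{and}\quad
\inf_{t\le0}\big(\mb_\lb(\w_0{\cdot}t)-\ml_\lb(\w_0{\cdot}t)\big)>0 .
\]
Take a minimal set $\mM\subseteq\W^{\upalpha}_{\w_0}$ and a sequence $t_n\downarrow-\infty$ with $\w_0{\cdot}t_n\to\w^*\in\mM$. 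Along this sequence, take limits of the points $(\w_0{\cdot}t_n,\muk_\lb(\w_0{\cdot}t_n))$ and $(\w_0{\cdot}t_n,\ml_\lb(\w_0{\cdot}t_n))$, which lie in the global attractor $\mA_\lb$. The closures of the orbits of these limit points, intersected with $\mM\times\R$, give compact $\tau_\lb$-invariant sets $\mK^+$ and $\mK^-$. Each projects onto $\mM$, and they satisfy $\mK^-<\{\mb_\lb|_\mM\}<\mK^+$. The separation is uniform, because the infima above pass to the limit by continuity of $\mb_\lb$ and invariance.

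\emph{Step 2: conclude on $\mM$.} On $\mM$, $\{\mb_\lb|_\mM\}$ is an attractive hyperbolic copy of $\mM$ lying strictly below the compact invariant set $\mK^+$. The restriction of $\mh$ to $\mM\times\R$ still satisfies \ref{d1}--\ref{d5}. If $\mK^+$ contains no hyperbolic copy of $\mM$, Theorem~\ref{th:3casosS}(d) gives an S-shaped diagram on $\mM$. If it does contain one, then we have two disjoint hyperbolic copies, and Theorem~\ref{th:3casosS}(c) gives three. In either case the restricted family is S-shaped, and by Remark~\ref{rm:3pre}.3 there are at most three ordered disjoint invariant compact sets projecting onto $\mM$. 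Now $\mK^-<\{\mb_\lb|_\mM\}<\mK^+$ are three such sets, so they must be the three hyperbolic copies $\{\ml\}<\{\mm\}<\{\muk\}$ for the flow on $\mM$, with the middle one repulsive. But the middle one is $\{\mb_\lb|_\mM\}$, which is attractive. This is the contradiction.

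\emph{Main obstacle.} The delicate step is Step~1. One must make sure that the backward-separation result of \cite{dno4} applies to $\mb_\lb$ versus the semicontinuous $\muk_\lb$ and $\ml_\lb$ (on both sides), and that the limiting sets are genuinely invariant, project onto $\mM$, and stay at positive distance from $\{\mb_\lb|_\mM\}$. I would handle the invariance by working inside the compact attractor $\mA_\lb$ and using the equality $\muk_\lb(\w_0{\cdot}(t+s))=v_\lb(s,\w_0{\cdot}t,\muk_\lb(\w_0{\cdot}t))$ together with continuity of $v_\lb$. If a two-sided separation is not directly available, I would separate on only one side and use Theorem~\ref{th:3casosS}(d) or (e) alone. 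From that I would deduce that $\mb_\lb|_\mM$ must equal $\ml$ or $\muk$ on $\mM$, and hence that the flow near $\w^*$ pins $\mb_\lb(\w_0)$ to $\ml_\lb(\w_0)$ or $\muk_\lb(\w_0)$. This last step would use \cite[Proposition 2.6(i)]{dno4} as in the last paragraph of the proof of Lemma~\ref{lema:hypconttapas}.
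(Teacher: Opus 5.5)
Your proof is correct, but it follows a different route from the paper's.

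\textbf{The paper's route.} The paper works minimal set by minimal set, in three stages.
\begin{enumerate}
\item Using a continuity point of $\ml_\lb$ (resp.~$\muk_\lb$) and the stability radius of $\{\mb_\lb\}$, it shows that on each minimal $\mM$ either $\mb_\lb|_\mM=\ml_\lb|_\mM$ or $\ml_\lb|_\mM<\mb_\lb|_\mM$, and likewise for $\muk_\lb$.
\item It rules out $\ml_\lb|_\mM<\mb_\lb|_\mM<\muk_\lb|_\mM$ with the measure-theoretic \cite[Proposition 5.2]{dno4}, which only needs pointwise ordering.
\item It transfers the equality from a minimal subset of $\W^{\upalpha}_{\w}$ back to $\w$ via the exponential estimate.
\end{enumerate}

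\textbf{Your route.} You argue at a single point $\w_0$ and upgrade the pointwise strict ordering there to uniform separation along the backward semiorbit. Taking limits then gives uniformly separated compact invariant sets $\mK^-<\{\mb_\lb|_\mM\}<\mK^+$ over a minimal $\mM\subseteq\W^{\upalpha}_{\w_0}$. From there, the purely topological count of Remark \ref{rm:3pre}.3 finishes the job.

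\textbf{What each buys.} Your argument is shorter and uses only results already recalled in the paper. It avoids both the residual set of continuity points and the ergodic-theoretic proposition. The paper's route yields extra information: on each minimal set, $\mb_\lb$ coincides identically with $\ml_\lb$ or with $\muk_\lb$.

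\textbf{Two remarks on your write-up.}
\begin{itemize}
\item The step you call the main obstacle is immediate. The difference $\mb_\lb(\w_0{\cdot}t)-\ml_\lb(\w_0{\cdot}t)$ is continuous and positive in $t$. So a vanishing infimum over $t\le0$ would give $t_n\downarrow-\infty$ with the difference below the radius $\rho$. Uniform exponential stability at $+\infty$ would then yield $\mb_\lb(\w_0)-\ml_\lb(\w_0)\le k\,e^{\gamma t_n}\rho\to0$, a contradiction; the same works for $\muk_\lb$. The limit sets inherit the separation because $v_\lb(s,\w^*,x^+)=\lim_n\muk_\lb(\w_0{\cdot}(t_n+s))$, with $t_n+s\le0$ eventually, and $\mb_\lb$ is continuous.
\item The passage through Theorem \ref{th:3casosS}(c)/(d) and the S-shaped diagram in Step 2 is superfluous. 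Remark \ref{rm:3pre}.3, applied over $\mM$, already forces the middle set $\{\mb_\lb|_\mM\}$ to be repulsive, which contradicts its attractivity.
\end{itemize}
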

\begin{proof}
Let $\rho$ be the radius of uniform stability of $\mb_\lb$, and let $\mM$ be a
minimal subset of $\W$. Let us check that either $\ml_\lb|_\mM<\mb_\lb|_\mM$
or $\ml_\lb|_\mM=\mb_\lb|_\mM$. We assume the existence $\w_0\in\mM$ such that
$\ml_\lb(\w_0)=\mb_\lb(\w_0)$, take $\bar\w$ in the residual subset of $\mM$
of continuity points of $\ml_\lb$, write $\bar\w=\lim_{n\to\infty}\w_0{\cdot}t_n$
for a suitable sequence, and deduce that $\ml_\lb(\bar\w)=
\lim_{n\to\infty}\ml_\lb(\w_0{\cdot}t_n)=\lim_{n\to\infty}\mb_\lb(\w_0{\cdot}t_n)
=\mb_\lb(\bar\w)$. Now, we take any other $\w\in\mM$, look for $(s_n)\downarrow-\infty$
such that $\bar\w=\lim_{n\to\infty}\ws_n$, deduce that $\mb_\lb(\ws_n)-\ml_\lb(\ws_n)<\rho$
for large enough $n$, and conclude that for these values of $n$ and certain constants
$k\ge 1$ and $\gamma>0$ (see Definition \ref{def:hcotb}),
$\mb_\lb(\w)-\ml_\lb(\w)=v_\lb(-s_n,\ws_n,\mb_\lb(\ws_n))-v_\lb(-s_n,\ws_n,\ml_\lb(\ws_n))
\le k\,e^{\gamma s_n}(\mb_\lb(\ws_n)-\ml_\lb(\ws_n))\le k\,e^{\gamma s_n}\rho$, which
ensures that $\mb_\lb(\w)=\ml_\lb(\w)$. This proves the assertion.
Similarly, we conclude that either $\mb_\lb|_\mM<\muk_\lb|_\mM$ or $\mb_\lb|_\mM=\muk_\lb|_\mM$.
Note also that the combination $\ml_\lb|_\mM<\mb_\lb|_\mM<\muk_\lb|_\mM$ is not possible: if so,
\cite[Proposition 5.2]{dno4} would ensure that $\mb_\lb|_\mM$ is hyperbolic repulsive,
which is not the case.

Now, we fix $\w\in\W$, take a minimal subset $\mM$ of $\W^{\upalpha}_\w$, and assume that
$\ml_\lb|_\mM=\mb_\lb|_\mM$.
So, $\mb_\lb(\w{\cdot}t_n)-\ml_\lb(\w{\cdot}t_n)<\rho$ for a suitable sequence
$(t_n)\downarrow-\infty$, and hence
$|\mb_\lb(\w)-\ml_\lb(\w)|\leq k\, e^{\gamma\,t_n}|b_\lb(\w{\cdot}t_n)-\ml_\lb(\w{\cdot}t_n)|\leq
k\,e^{\gamma\,t_n}\rho$. Thus, $\mb_\lb(\w)=\ml_\lb(\w)$.
The conclusion is the same if $\muk_\lb|_\mM=\mb_\lb|_\mM$. This completes the proof.
\end{proof}
The following result describes a collection of dynamical properties that are common to
the cases \ref{p1}, \ref{p2} and \ref{p3}:
\begin{teor}\label{teor:4p123}
Assume that ${\bar\lb}>\max\{\lb_u^*,\lb_l^*\}$
(resp.~${\bar\lb}<\min\{\lb_u^*,\lb_l^*\}$), and take any $\bar\w\in\W$  with dense $\sigma$-orbit.
Then,
\begin{itemize}[leftmargin=20pt]
\item[{\rm(i)}] $\lim_{t\to\infty}\big(\muk_{\bar\lb}(\bwt)-\ml_{\bar\lb}(\bwt)\big)=0$\,.
\item[{\rm(ii)}] One of the following statements holds:
\begin{itemize}[leftmargin=20pt]
\item[{\rm(a)}] $\muk_{\bar\lb}(\bwt)=\ml_{\bar\lb}(\bwt)$ for all $t\in\R$,
\item[{\rm(b)}] there exists $\delta>0$ such that
$\inf_{t\le 0}\big(\muk_{\bar\lb}(\bwt)-\ml_{\bar\lb}(\bwt)\big)=\delta$,
and the bifurcation diagram of \eqref{eq:4hlb-w_general}
restricted to $\W^{\upalpha}_{\bar\w}$ is S-shaped. In this case, if
$(\lb_-)_{\w_0}$ and $(\lb^+)_{\w_0}$ are defined as in Theorem {\rm\ref{th:3Dbifur}(v)},
then $(\lb_-)_{\w_0}=\lb_u(\w_0)$, $(\lb^+)_{\w_0}=\lb_l(\w_0)$ and
$\lb_u(\w_0)\le\lb_u^*<{\bar\lb}\le\lb_l(\w_0)$
(resp. $\lb_u(\w_0)\le{\bar\lb}<\lb_l^*\le\lb_l(\w_0)$) for all
$\w_0\in\W^{\upalpha}_{\bar\w}$.
\end{itemize}
In particular, if $\W=\W^{\upalpha}_{\bar\w}$, then {\rm(a)} holds.
\end{itemize}
\end{teor}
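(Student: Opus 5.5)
I treat the case $\bar\lb>\max\{\lb_u^*,\lb_l^*\}$; the other one is symmetric, exchanging the roles of $\ml$ and $\muk$ and of $\lb_u^*$ and $\lb_l^*$. Since $\bar\lb>\lb_u^*$, Lemma~\ref{lema:hypconttapas} shows that $\{\muk_{\bar\lb}\}$ is an attractive hyperbolic $\tau_{\bar\lb}$-copy of $\W$. Let $\rho$ be its radius of uniform stability and $(k,\gamma)$ its dichotomy constants.

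\textbf{Proof of (i).} By Proposition~\ref{prop:4disyuntiva}, applied with $\mb_{\bar\lb}=\muk_{\bar\lb}$, this only repeats what we already know. The real information must come from $\bar\lb>\lb_l^*$, where $\{\ml_{\bar\lb}\}$ is not a hyperbolic copy. The plan is to show that $\ml_{\bar\lb}(\bwt)$ enters the $\rho$-neighbourhood of $\muk_{\bar\lb}(\bwt)$ at some time $t_0$. After that time, uniform exponential stability forces $\muk_{\bar\lb}(\bwt)-\ml_{\bar\lb}(\bwt)\le k e^{-\gamma(t-t_0)}\rho\to0$, which is (i).

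To get the entry time, I argue by contradiction. Suppose $\muk_{\bar\lb}(\bwt)-\ml_{\bar\lb}(\bwt)\ge\rho$ for all $t\ge0$. Take a minimal set $\mM\subseteq\W^{\upomega}_{\bar\w}$. The semicontinuity of $\ml_{\bar\lb}$ and the continuity of $\muk_{\bar\lb}$ then produce a point $\w_0\in\mM$ with $\ml_{\bar\lb}(\w_0)\le\muk_{\bar\lb}(\w_0)-\rho$. Using \cite[Propositions 2.6 and 2.7]{dno4}, as in the proof of Lemma~\ref{lema:hypconttapas}, one obtains two separated compact invariant sets over $\mM$, with $\{\muk_{\bar\lb}|_\mM\}$ the upper one. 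By Theorems~\ref{th:3casosS}(e) and~\ref{th:3Dbifur}(vi), the diagram on $\mM$ is then S-shaped, with $\bar\lb<(\lb^+)_\mM$. On $\mM$ the lower branch $\ml_\lb|_\mM$ is hyperbolic for all $\lb<(\lb^+)_\mM$.

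One must then propagate this hyperbolicity from $\mM$ to all of $\W$ in order to contradict $\bar\lb>\lb_l^*$. \emph{This is the step I expect to be the main obstacle.} My first attempt would be a forward-attraction argument. Orbits starting below $\muk_{\bar\lb}-\rho$ along $\bar\w$ are attracted, as $t\to\infty$, to the lower hyperbolic structure over $\W^{\upomega}_{\bar\w}$. One then has to exclude the sign-changing mixtures allowed by Proposition~\ref{prop:4disyuntiva}. If a direct contradiction does not come out, I would instead obtain (i) from the dichotomy in (ii), using the forward-in-time analogue of \cite[Proposition 2.6]{dno4}: the gap $\muk_{\bar\lb}-\ml_{\bar\lb}$ along an orbit either tends to $0$ or stays bounded below, and its lower semicontinuous limit over $\W^{\upomega}_{\bar\w}$ cannot stay positive when $\bar\lb>\lb_l^*$.

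\textbf{Proof of (ii).} I apply \cite[Proposition 2.6(i)]{dno4} backward in time to the pair $\ml_{\bar\lb}\le\muk_{\bar\lb}$ along $\bar\w$. Either the gap vanishes identically, which is (a), or $\delta:=\inf_{t\le0}(\muk_{\bar\lb}-\ml_{\bar\lb})(\bwt)>0$.

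In the second case the two equilibria give separated compact invariant sets over $\W^{\upalpha}_{\bar\w}$, with $\{\muk_{\bar\lb}|_{\W^{\upalpha}_{\bar\w}}\}$ hyperbolic attractive and above. Theorem~\ref{th:3casosS}(e) then makes the diagram on $\W^{\upalpha}_{\bar\w}$ S-shaped, and for $\w_0$ in this set it gives $\bar\lb\in((\lb_-)_{\w_0},(\lb^+)_{\w_0})$.

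The remaining identifications use Theorem~\ref{th:3Dbifur}(ii) on $\W_{\w_0}$, which gives $(\lb_-)_{\w_0}=\lb_u(\w_0)$ and $(\lb^+)_{\w_0}=\lb_l(\w_0)$, together with Lemma~\ref{lema:infinity}(ii) for $\lb_u(\w_0)\le\lb_u^*$.

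Finally, suppose $\W=\W^{\upalpha}_{\bar\w}$. Then case (b) would make the diagram on $\W$ itself S-shaped with $\bar\lb<\lb^+=\lb_l^*$, contradicting $\bar\lb>\lb_l^*$. Hence (a) holds.
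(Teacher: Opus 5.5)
The gap is in (i). The step you flag as the main obstacle is never carried out, and it cannot be carried out along the lines you suggest.

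Everything you extract lives over $\W^{\upomega}_{\bar\w}$: an S-shaped diagram on a minimal $\mM\subseteq\W^{\upomega}_{\bar\w}$ with $\bar\lb$ in its bistability range, and a positive gap $\muk_{\bar\lb}-\ml_{\bar\lb}$ there. That information is compatible with $\bar\lb>\max\{\lb_u^*,\lb_l^*\}$, so no contradiction can come from it alone. For instance, take the variant of Example~\ref{ex:4p1c} with $f$ increasing from $-1$ to $1$, i.e., $x'=-x^3+f(t)\,x+\lb$.
\begin{itemize}
\item The fibers of $\mA_\lb$ over the orbit of $f$ are singletons, since the equation is contracting while $f<0$.
\item The corresponding bounded solution is $x\equiv0$ for $\lb=0$.
\item For $\lb>0$ (resp.~$\lb<0$) this solution stays above (resp.~below) the middle equilibrium of $x'=-x^3+x+\lb$, and so tends to the upper (resp.~lower) one.
\item Hence $\muk_\lb$ (resp.~$\ml_\lb$) is a hyperbolic copy of $\W$ for all $\lb>0$ (resp.~$\lb<0$) but not for $\lb=0$, so $\lb_l^*=\lb_u^*=0$.
\end{itemize}
Nevertheless, for $\bar\lb\in(0,2/(3\sqrt3))$ the minimal set $\W^{\upomega}_f=\{\w^\upomega\}$ carries an S-shaped diagram with $\bar\lb<(\lb^+)_{\w^\upomega}$, and $\ml_{\bar\lb}(\w^\upomega)<\muk_{\bar\lb}(\w^\upomega)$. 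So hyperbolicity of the lower branch over $\mM$ does not propagate to $\W$. Your fallback claim, that the gap cannot stay positive over $\W^{\upomega}_{\bar\w}$ when $\bar\lb>\lb_l^*$, is false.

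The missing idea is to use the whole orbit of $\bar\w$, which is dense, instead of its $\upomega$-limit set. Your contradiction hypothesis extends to all times for free. If $\muk_{\bar\lb}(\bar\w{\cdot}t_1)-\ml_{\bar\lb}(\bar\w{\cdot}t_1)<\rho$ for some $t_1<0$, the uniform exponential stability of $\{\muk_{\bar\lb}\}$ would make the gap tend to $0$ as $t\to\infty$. Hence the gap is at least $\rho$ for all $t\in\R$. Then the closure of $\{(\bwt,\ml_{\bar\lb}(\bwt))\,|\;t\in\R\}$ is a compact $\tau_{\bar\lb}$-invariant set projecting onto the whole $\W$ and lying strictly below $\{\muk_{\bar\lb}\}$. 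Corollary~\ref{coro:3noS} then makes the diagram on $\W$ itself S-shaped. Theorems~\ref{th:3casosS} and~\ref{th:3Dbifur}, with Remark~\ref{rm:4p1}, give $\bar\lb\in(\lb_-,\lb^+]=(\lb_u^*,\lb_l^*]$, contradicting $\bar\lb>\lb_l^*$.

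This is the paper's argument. It proves $\inf_{t\in\R}(\muk_{\bar\lb}-\ml_{\bar\lb})(\bwt)=0$ this way, and the backward dichotomy of \cite[Propositions 2.7 and 2.6(i)]{dno4} then forces the infimum over $t\ge0$ to vanish, whence (i).

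Your part (ii) agrees with the paper, with one correction. Corollary~\ref{coro:3noS} puts you in case (c) \emph{or} (e) of Theorem~\ref{th:3casosS}, so you only get $\bar\lb\in((\lb_-)_{\w_0},(\lb^+)_{\w_0}]$, not the open interval. The same correction applies to your $\bar\lb<(\lb^+)_\mM$. Your proof of the final assertion is a valid shortcut: when $\W=\W^{\upalpha}_{\bar\w}$, case (b) would give $\bar\lb\le\lb^+=\lb_l^*$. Unlike the paper's argument for that assertion, it does not need (i).
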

\begin{proof}
We work in the case ${\bar\lb}>\max\{\lb_u^*,\lb_l^*\}$, proving first that
\begin{equation}\label{eq:apartirdeaqui}
 \inf_{t\in\R}\big(\muk_{\bar\lb}(\bwt)-\ml_{\bar\lb}(\bwt)\big)=0\,.
\end{equation}
Let us assume for contradiction that this is not the case.
Since $\muk_{\bar\lb}$ provides a copy of $\W$, it follows easily that $\{\muk_{\bar\lb}\}$ and
$\mathrm{closure}_{\W\times\R}\{(\bwt,\ml_{\bar\lb}(\bwt))\,|\;t\in\R\}$ are disjoint
ordered compact $\tau_{\bar\lb}$-invariant sets projecting onto $\W$.
So, Corollary \ref{coro:3noS} ensures that the bifurcation diagram is that
described by Theorem \ref{th:3Dbifur}, and this result ensures that ${\bar\lb}\in(\lb_-,\lb^+]$.
But in this case, $(\lb_-,\lb^+]=(\lb_u^*,\lb_l^*]$ (see Remark \ref{rm:4p1}),
which contradicts the choice of $\bar\lb$. So, \eqref{eq:apartirdeaqui} holds.

According to \cite[Propositions 2.7 and 2.6(i)]{dno4}, \eqref{eq:apartirdeaqui} guarantees
that either
$\ml_{\bar\lb}(\bwt)=\muk_{\bar\lb}(\bwt)$ for all $t\in\R$ (and hence (a) of (ii) holds) or
\begin{equation}\label{eq:solshyp_haciatras}
\delta:=\inf_{t\le 0}\big(\muk_{\bar\lb}(\bwt)-\ml_{\bar\lb}(\bwt)\big)>0\,.
\end{equation}
In both cases, $\inf_{t\ge 0}\big(\muk_{\bar\lb}(\bwt)-\ml_{\bar\lb}(\bwt)\big)=0$,
and the hyperbolicity of $\muk_{\bar\lb}$ yields
\begin{equation}\label{eq:solshyp_haciadelante}
\lim_{t\to\infty}\big(\muk_{\bar\lb}(\bwt)-\ml_{\bar\lb}(\bwt)\big)=0\,,
\end{equation}
that is, (i) holds.

Let us complete the proof of (ii). As just seen, if
we are not in case (a), then \eqref{eq:solshyp_haciatras} holds, and this ensures
that $\{\muk_{\bar\lb}|_{\W^{\upalpha}_{\bar\w}}\}$
and the \upalfa-limit set for $\tau_{\bar\lb}$ of
$(\bar\w,\ml_{\bar\lb}(\bar\w))$ are disjoint ordered
compact $\tau_{\bar\lb}$-invariant sets projecting onto $\W^{\upalpha}_{\bar\w}$,
being $\{\muk_{\bar\lb}|_{\W^{\upalpha}_{\bar\w}}\}$ an attractive hyperbolic copy of
$\W^{\upalpha}_{\bar\w}$.
So, when restricting \eqref{eq:4hlb-w_general} to $\W^{\upalpha}_{\bar\w}$,
Corollary \ref{coro:3noS} ensures that we are in case (c)
or (e) of Theorem \ref{th:3casosS}, and hence the bifurcation diagram is that
of Theorem \ref{th:3Dbifur}. Let us take
$\w_0\in\W^{\upalpha}_{\bar\w}$ and observe that
$\W_{\w_0}\subseteq\W^{\upalpha}_{\bar\w}$, so that
the bifurcation diagram for the restriction to $\W_{\w_0}$ is also S-shaped.
Combining the information of Theorems \ref{th:3casosS} and \ref{th:3Dbifur}
with that of Remark \ref{rm:4p1}, we conclude that
${\bar\lb}\in((\lb_-)_{\w_0},(\lb^+)_{\w_0}]=
(\lb_u(\w_0),\lb_l(\w_0)]$.
Finally, our initial assumption ensures that $\lb_u^*<{\bar\lb}$,
and Lemma \ref{lema:infinity}(ii) ensures that $\lb_u(\w_0)\le\lb_u^*$.

To complete the proof, we check that (a) holds if $\W=\W^{\upalpha}_{\bar\w}$;
i.e., if $\bar\w\in\W^{\upalpha}_{\bar\w}$. For contradiction, we assume
\eqref{eq:solshyp_haciatras}. Since $\bwt\in \W^{\upalpha}_{\bar\w}$ for all $t\in\R$,
\eqref{eq:solshyp_haciatras} combined with the continuity of $\muk_{\bar\lb}$
and the lower semicontinuity of $\ml_{\bar\lb}$ ensures that
$(\muk_{\bar\lb}(\bwt)-\ml_{\bar\lb}(\bwt)\big)\ge\delta$ for all $t\in\R$,
which contradicts (i).

The proof for ${\bar\lb}<\min\{\lb_u^*,\lb_l^*\}$ is analogous.
\end{proof}
Note that the last assertion restricts the type
of base spaces $\W$ for which situation
(b) may arise. For instance, it cannot occur if $\W$ is minimal.

Our next result completes the information given in Remark \ref{rm:4p1}.
Three different situations may occur within \ref{p1}: either  the absence of bifurcation points,
or a global S-shaped bifurcation diagram for the whole family, or an S-shaped bifurcation
diagram at the \upalfa-limit sets of all the points with dense orbit.
\begin{teor}\label{teor:4p1}
Assume that \ref{p1} holds.
Then, exactly one of the following statements holds:
\begin{itemize}[leftmargin=20pt]
\item[\rm{(a)}] $\lb_u^*=-\infty$ and $\lb_l^*=\infty$. If so, the global attractor
$\mA_\lb$ is an attractive hyperbolic $\tau_\lb$-copy of $\W$ for all $\lb\in\R$,
and hence there are no loss-of-hyperbolicity bifurcation points.
\item[\rm{(b)}] $-\infty<\lb_u^*<\lb_l^*<\infty$ and,
for all $\lb\in(\lb_u^*,\lb_l^*)$, there exist
three disjoint $\tau_\lb$-copies of $\W$. If so,
the bifurcation diagram of \eqref{eq:4hlb-w_general}
is S-shaped and, if $\lb_-$ and $\lb^+$ are defined in Theorem
{\rm\ref{th:3Dbifur}}, then $\lb_-=\lb_u^*$ and $\lb^+=\lb_l^*$.
\item[\rm{(c)}] $-\infty<\lb_u^*<\lb_l^*<\infty$ and
there are no three
$\tau_\lb$-disjoint copies of $\W$ for any $\lb\in(\lb_u^*,\lb_l^*)$.
In this case, if $\bar\w$ is any point with dense $\sigma$-orbit, then
there exist three $\tau_{\bar\lb}$-copies of $\W^{\upalpha}_{\bar\w}$
for any $\bar\lb\in(\lb_u^*,\lb_l^*)$,
the bifurcation diagram of \eqref{eq:4hlb-w_general} restricted to
$\W^{\upalpha}_{\bar\w}$ is S-shaped, and if $(\lb_-)_{\w_0}$ and $(\lb^+)_{\w_0}$
are defined as in Theorem {\rm\ref{th:3Dbifur}(v)},
then $(\lb_u^*,\lb_l^*)\subseteq((\lb_-)_{\w_0},(\lb^+)_{\w_0})$
for all $\w_0\in\W^{\upalpha}_{\bar\w}$.
Moreover, for any $\bar\lb\in(\lb_u^*,\lb_l^*)$,
$\lim_{t\to\infty}\big(\muk_{\bar\lb}(\bwt)-\ml_{\bar\lb}(\bwt)\big)=0$
and $\muk_{\bar\lb}|_{\W^{\upomega}_{\bar\w}}=
\ml_{\bar\lb}|_{\W^{\upomega}_{\bar\w}}$.
\end{itemize}
Furthermore, if there exist $\w_1,\w_2\in\W$ (equal or different) with dense orbit and
with $\W^{\upalpha}_{\w_1}\cap\W^{\upomega}_{\w_2}$ non-empty, then either {\rm (a)} or
{\rm (b)} holds.
\end{teor}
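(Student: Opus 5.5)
Under \ref{p1}, Lemma~\ref{lema:infinity}(i) splits the argument into two regimes: either $\lb_u^*=-\infty$ and $\lb_l^*=\infty$, or both values are real. The first regime is (a). By the same lemma, $\ml_\lb=\muk_\lb$ for every $\lb$. Since $\lb>\lb_u^*=-\infty$ always, $\{\muk_\lb\}$ is an attractive hyperbolic copy of $\W$, and hence so is $\mA_\lb$. Every continuous branch of hyperbolic copies must then be attractive (Corollary~\ref{coro:3noS}, applicable because no three disjoint copies exist). By Proposition~\ref{prop:4disyuntiva}, such a branch coincides with $\muk_\lb$, so it extends to all of $\R$ and there is no loss-of-hyperbolicity bifurcation point.

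Assume now that $-\infty<\lb_u^*<\lb_l^*<\infty$ and fix $\bar\lb\in(\lb_u^*,\lb_l^*)$. By Lemma~\ref{lema:hypconttapas}, $\{\muk_{\bar\lb}\}$ and $\{\ml_{\bar\lb}\}$ are both attractive hyperbolic copies of $\W$. If they are distinct, they are disjoint, since they are ordered and a common point would propagate along the attractive copies. Theorem~\ref{th:3casosS}(c) then gives three disjoint copies, the diagram is S-shaped, and $\lb_-=\lb_u^*$, $\lb^+=\lb_l^*$ as in Remark~\ref{rm:4p1}. For this to give (b) I must show that the dichotomy does not depend on $\bar\lb$. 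This holds because an S-shaped diagram at one parameter value, by Theorem~\ref{th:3Dbifur}(i)--(ii), yields three copies on the whole interval $(\lb_-,\lb^+)=(\lb_u^*,\lb_l^*)$.

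The remaining case is (c), where $\ml_{\bar\lb}$ and $\muk_{\bar\lb}$ are not uniformly separated for any $\bar\lb\in(\lb_u^*,\lb_l^*)$. Here I take $\bar\w$ with dense orbit and rerun the proof of Theorem~\ref{teor:4p123}, which only used that $\muk_{\bar\lb}$ is a hyperbolic copy and that the global diagram is not S-shaped; both hold here, and $\ml_{\bar\lb}$ is hyperbolic as well. This should give $\inf_{t\in\R}(\muk_{\bar\lb}-\ml_{\bar\lb})(\bwt)=0$, then the dichotomy (a)/(b) via \cite[Propositions 2.7, 2.6(i)]{dno4}, and also $\lim_{t\to\infty}(\muk_{\bar\lb}-\ml_{\bar\lb})(\bwt)=0$.

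The main obstacle is to exclude $\muk_{\bar\lb}=\ml_{\bar\lb}$ along the dense orbit. If that equality held, continuity of both maps would force $\muk_{\bar\lb}=\ml_{\bar\lb}$ on all of $\W$. Then $\ml_\lb=\muk_\lb$ would have to persist nearby: by Theorem~\ref{th:3persistence} and Lemma~\ref{lema:hypconttapas} there is a single branch equal to both maps near $\bar\lb$. Using connectedness of $(\lb_u^*,\lb_l^*)$ together with a closedness argument for the set where the two maps coincide, $\ml_\lb=\muk_\lb$ would hold on the whole interval, and hence for all $\lb$, contradicting finiteness via Lemma~\ref{lema:infinity}(i) and Theorem~\ref{th:4shape}.

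So alternative (b) of Theorem~\ref{teor:4p123} holds: the diagram restricted to $\W^{\upalpha}_{\bar\w}$ is S-shaped, and three copies exist there. The inclusion $(\lb_u^*,\lb_l^*)\subseteq((\lb_-)_{\w_0},(\lb^+)_{\w_0})$ for $\w_0\in\W^{\upalpha}_{\bar\w}$ follows from $\lb_u(\w_0)\le\lb_u^*$ and $\lb_l^*\le\lb_l(\w_0)$ in Lemma~\ref{lema:infinity}(ii). On $\W^{\upomega}_{\bar\w}$, the limit $\muk_{\bar\lb}(\bwt)-\ml_{\bar\lb}(\bwt)\to0$ and continuity give equality.

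For the last assertion, suppose (c) holds and $\w_0\in\W^{\upalpha}_{\w_1}\cap\W^{\upomega}_{\w_2}$. By (c) applied to $\w_2$, $\muk_{\bar\lb}(\w_0)=\ml_{\bar\lb}(\w_0)$. By (c) applied to $\w_1$, the two maps are separated by $\delta>0$ along the negative semiorbit of $\w_1$. Passing to the limit using continuity (both maps are continuous here) yields $\muk_{\bar\lb}-\ml_{\bar\lb}\ge\delta$ at $\w_0$, a contradiction.
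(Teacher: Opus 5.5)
Your overall route is the paper's: split on finiteness via Lemma~\ref{lema:infinity}(i), then on whether the two attractive copies $\{\ml_{\bar\lb}\}$ and $\{\muk_{\bar\lb}\}$ separate, then rerun the argument of Theorem~\ref{teor:4p123} in the non-separated case. However, the justification you give for case (b) is false: two distinct attractive hyperbolic copies of $\W$ need not be disjoint. A coincidence $\ml_{\bar\lb}(\w_0)=\muk_{\bar\lb}(\w_0)$ propagates only to the orbit closure $\W_{\w_0}$. The backward contraction used in Proposition~\ref{prop:4disyuntiva} transfers equality forward from points of $\upalpha$-limit sets, not from arbitrary points, so it cannot reach all of $\W$ when the base is merely transitive.

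Example~\ref{ex:4p1c} is a counterexample. For $\lb\in(\lb_u^*,\lb_l^*)$, both $\{\ml_\lb\}$ and $\{\muk_\lb\}$ are attractive hyperbolic copies; they differ at $\w^\upalpha$ but coincide at $\w^\upomega$. This is precisely the situation of case (c). Your own (c) paragraph shows that $\ml_{\bar\lb}\neq\muk_{\bar\lb}$ along the dense orbit while the two maps are not uniformly separated, so your claim would make (c) vacuous. The repair is to split, as the paper does, on whether $\inf_{\w\in\W}\big(\muk_{\bar\lb}(\w)-\ml_{\bar\lb}(\w)\big)>0$ for some $\bar\lb\in(\lb_u^*,\lb_l^*)$. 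Equivalently, by continuity and density, ask whether $\inf_{t\in\R}\big(\muk_{\bar\lb}(\bwt)-\ml_{\bar\lb}(\bwt)\big)>0$. In the positive case Theorem~\ref{th:3casosS}(c) applies and the rest of your (b) argument works. The complementary case is exactly the hypothesis you actually use in (c), and it also immediately rules out three disjoint copies for every $\lb$ in the interval, by Remark~\ref{rm:3pre}.3.

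A smaller point concerns excluding $\muk_{\bar\lb}=\ml_{\bar\lb}$. Your step from equality on all of $(\lb_u^*,\lb_l^*)$ to equality ``for all $\lb$'' is not justified as written, since outside the interval only one of the two maps is known to be hyperbolic. It is also unnecessary. Right-continuity of $\lb\mapsto\muk_\lb(\w)$, together with continuity of $\lb\mapsto\ml_\lb$ at $\lb_u^*<\lb_l^*$, gives $\muk_{\lb_u^*}=\ml_{\lb_u^*}$. This is a hyperbolic copy of $\W$, contradicting the definition of $\lb_u^*$ via Lemma~\ref{lema:hypconttapas}.

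With these two fixes, the remaining steps coincide with the paper's proof. These are the inclusion for $\w_0\in\W^{\upalpha}_{\bar\w}$, which also needs $(\lb_-)_{\w_0}=\lb_u(\w_0)$ and $(\lb^+)_{\w_0}=\lb_l(\w_0)$ from the S-shaped diagram on $\W_{\w_0}$; the equality on $\W^{\upomega}_{\bar\w}$; and the final assertion.
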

\begin{proof}
In case (a), for all $\lb\in\R$, $\ml_\lb=\muk_\lb$ (see Lemma \ref{lema:infinity}(i)),
which ensures that $\mA_\lb=\{\muk_\lb\}$, a copy of $\W$. The definition of $\lb_u^*$
ensures that it is hyperbolic, and hence it is attractive. The final conclusion in (a) is obvious.

Let us assume that this is not the case and fix
any point $\bar\w\in\W$ with dense $\sigma$-orbit.
Lemma \ref{lema:infinity}(i) ensures that $-\infty<\lb_u^*<\lb_l^*<\infty$.
If there exists $\lb_0\in(\lb_u^*,\lb_l^*)$ such that
$\inf_{t\in\R}\big(\muk_{\lb_0}(\bwt)-\ml_{\lb_0}(\bwt)\big)=\rho>0$, then
the semicontinuity properties of $\muk_{\lb_0}$ and $\ml_{\lb_0}$ (see Remark \ref{rm:3pre}.1)
ensure that $\inf_{\w\in\W}\big(\muk_{\lb_0}(\w)-\ml_{\lb_0}(\w)\big)>0$, and hence
the hyperbolic copies of $\W$ $\{\muk_{\lb_0}\}$ and $\{\ml_{\lb_0}\}$ are disjoint.
So, Corollary \ref{coro:3noS} and the description made in Theorem
\ref{th:3Dbifur} prove that the situation is that described in (b).

Let us check that the assertions given in (c) hold
if $\inf_{t\in\R}\big(\muk_\lb(\bwt)-\ml_\lb(\bwt)\big)=0$ for all
$\lb\in(\lb_u^*,\lb_l^*)$.
If $\muk_{\lb_1}=\ml_{\lb_1}$ held for some $\lb_1\in(\lb_u^*,\lb_l^*)$,
we could reason as in the proof of Lemma \ref{lema:hypconttapas}
to check that $\muk_\lb=\ml_\lb$ for all
$\lb\in(\lb_u^*,\lb_l^*)$. Therefore,
$\muk_{\lb_u^*}=\lim_{\lb\to(\lb_u^*)^+}\muk_\lb=
\lim_{\lb\to(\lb_u^*)^+}\ml_\lb=\ml_{\lb_u^*}$, due to the right continuity
of $\lb\mapsto\muk_\lb$ (see Remark~\ref{rm:3pre}.1)
and the continuity of $\lb\mapsto\ml_\lb$ at $\lb=\lb_u^*$.
But this means that $\muk_{\lb_u^*}$ is a hyperbolic copy of $\W$, since
$\lb_u^*<\lb_l^*$, which contradicts the definition of $\lb_u^*$.
We fix any ${\bar\lb}\in(\lb_u^*,\lb_l^*)$, and observe that we have
just checked that $\muk_{\bar\lb}(\bwt)\neq\ml_{\bar\lb}(\bwt)$ for all $t\in\R$,
since both maps are continuous and $\{\bwt\,|\;t\in\R\}$ is dense.
Reasoning as in the proof of Theorem \ref{teor:4p123} (after proving \eqref{eq:apartirdeaqui}),
we deduce that \eqref{eq:solshyp_haciatras} and hence
\eqref{eq:solshyp_haciadelante} hold
for $\bar\w$ and that $\ml_{\bar\lb}|_{\W^{\upalpha}_{\bar\w}}$ and
$\muk_{\bar\lb}|_{\W^{\upalpha}_{\bar\w}}$ are two different hyperbolic copies of
$\W^{\upalpha}_{\bar\w}$, and conclude that the bifurcation diagram on
$\W^{\upalpha}_{\bar\w}\times\R$
is S-shaped with $(\lb_u^*,\lb_l^*)
\subseteq((\lb_-)_{\w_0},(\lb^+)_{\w_0})$ for all $\w_0\in\W^{\upalpha}_{\bar\w}$.
Finally, the continuity of $\lb\mapsto\ml_\lb$ and $\lb\mapsto\muk_\lb$ at
$\bar\lb\in(\lb_u^*,\lb_l^*)$
and $\lim_{t\to\infty}\big(\muk_{\bar\lb}(\bwt)-\ml_{\bar\lb}(\bwt)\big)=0$
(see \eqref{eq:solshyp_haciadelante}) ensure that
$\muk_{\bar\lb}|_{\W^{\upomega}_{\bar\w}}=\ml_{\bar\lb}|_{\W^{\upomega}_{\bar\w}}$.

To complete the proof, it remains to assume (c) and $\W=\W_{\w_1}=\W_{\w_2}$,
and check that $\W^{\upalpha}_{\w_1}\cap\W^{\upomega}_{\w_2}$ is empty: for contradiction,
we assume the existence of $\w\in \W^{\upalpha}_{\w_1}\cap\W^{\upomega}_{\w_2}$,
take $\bar\lb\in(\lb_u^*,\lb_l^*$), and observe that
$\muk_{\bar\lb}(\w)=\ml_{\bar\lb}(\w)$, since
$\ml_{\bar\lb}|_{\W^{\upomega}_{\w_2}}=\muk_{\bar\lb}|_{\W^{\upomega}_{\w_2}}$,
and $\muk_{\bar\lb}(\w)<\ml_{\bar\lb}(\w)$, since
$\ml_{\bar\lb}|_{\W^{\upalpha}_{\w_1}}<\muk_{\bar\lb}|_{\W^{\upalpha}_{\w_1}}$.
\end{proof}
Note that the last assertion of the previous theorem
restricts the type of base spaces $\W$ for which
\ref{p1} plus (c) may arise. For instance, it cannot occur if $\W$ is minimal, or given
by a single critical point and a homoclinic connection.
Easy autonomous examples of case \ref{p1} fitting
(a) and (b) are $x'=-x^3-x+\lb$ and $x'=-x^3+x+\lb$, in this second case with
$\lb_u^*=-2/(3\sqrt{3})$ and $\lb_l^*=2/(3\sqrt{3})$. As just said, case (c)
requires a nonautonomous case:
\begin{exa}\label{ex:4p1c}
To give a simple example of case \ref{p1}(c), we take $f\colon\R\to[-1,1]$ as
a smooth decreasing map with $\lim_{t\to-\infty}f(t)=1$ and
$\lim_{t\to\infty}f(t)=-1$. The hull of $h(t,x):=-x^3+f(t)\,x$ can be
identified with the hull $\W$ of $f$: the equations of the corresponding
family \eqref{eq:4hlb-w_general} are $x'=-x^3+\mf(\wt)\,x+\lb$ for $\mf\colon\W\to\R,\,
\w\mapsto\w(0)$, and  $\W$ is the union of the orbit $\{f{\cdot}t\,|\;t\in\R\}$
of $f$ and its \upalfa-limit set and its \upomeg-limit set, which we are
the constant maps $\w^\upalpha=1$ and $\w^\upomega=-1$ (see, e.g.,
\cite[Lemma 2.4]{dno4}). Let us take $\lb<2/(3\sqrt3)$.
We can reason as in the proof of \cite[Theorem 3.7]{dno3}
to check that $\lim_{t\to-\infty}\ml_\lb(f{\cdot}t)
=\ml_\lb(\w^\upalpha)$ which is the lower (hyperbolic attractive) equilibrium
point of $x'=-x^3+x+\lb$; and it is easy to check that
$\lim_{t\to\infty}\ml_\lb(f{\cdot}t)=\ml_\lb(\w^\upomega)$, which is the unique
bounded (constant) solution of $x'=-x^3-x+\lb$. So,
$\ml_\lb\colon\W\to\R$ is continuous. On the other hand,
the unique ergodic measures on $\W$,
$m^\upalpha$ and $m^\upomega$, are those concentrated on
$\{\w^\upalpha\}$ and $\{\w^\upomega\}$. The attractive hyperbolicity of the
solution $\ml_\lb(\w^\upalpha)$ of $x'=-x^3+x+\lb$ ensures that
$\int_\W (-3(\ml_\lb(\w))^2+\mf(\w))\,dm^\upalpha=-3(\ml_\lb(\w^\upalpha))^2+1<0$,
and obviously $\int_\W (-3(\ml_\lb(\w))^2+\mf(\w))\,dm^\upomega=
-3(\ml_\lb(\w^\upomega))^2-1<0$. So, all the Lyapunov exponents of the
copy of the base $\{\ml_\lb\}$ are strictly negative (see, e.g.,
\cite[Section 2.1]{dno5}), and hence
Theorem \ref{th:3copia} ensures that $\{\ml_\lb\}$ is hyperbolic
attractive. This means that $\lb_l^*\ge 2/(3\sqrt3)$. A similar argument
shows that $\lb_u^*\le -2/(3\sqrt3)<\lb_l^*$: we are in case \ref{p1}.
(In fact, $\lb_u^*=-2/(3\sqrt3)$ and $\lb_l^*=2/(3\sqrt3)$, as deduced from
Lemma \ref{lema:infinity}(ii) applied to $\w^\upalpha$).
In addition, the equation corresponding $\w^\upomega$, $x'=-x^3-x+\lb$,
does not have three hyperbolic points for any value of $\lb$, and
this precludes (b); and the equation for $\w^\upalpha$, $x'=-x^3+x+\lb$,
has three fixed points at $\lb=0$, which precludes (a). So, the
situation is necessarily (c).
\end{exa}
The construction of examples of case \ref{p2} exhibiting different properties on the
base $\W$ will be addressed in Section \ref{sec:6}.
We close this section on the general classification with some information
about one of the (many) possibilities that may arise when \ref{p3} holds.
\begin{prop}\label{prop:4p3}
Assume that \ref{p3} holds and that,
for a value $\bar\lb\in(\lb_l^*,\lb_u^*)$, either $\{\ml_{\bar\lb}\}$
or $\{\muk_{\bar\lb}\}$ is a hyperbolic $\tau_{\bar\lb}$-copy of $\W$.
Let $\bar\w\in\W$ have dense orbit. Then, exactly one of the following statements holds:
\begin{itemize}[leftmargin=20pt]
\item[{\rm(a)}] $\muk_{\bar\lb}(\bwt)=\ml_{\bar\lb}(\bwt)$ for all $t\in\R$.
\item[{\rm(b)}] $\inf_{t\le 0}\big(\muk_{\bar\lb}(\bwt)-\ml_{\bar\lb}(\bwt)\big)>0$,
$\lim_{t\to\infty}\big(\muk_{\bar\lb}(\bwt)-\ml_{\bar\lb}(\bwt)\big)=0$,
the bifurcation diagram of \eqref{eq:4hlb-w_general} restricted to
$\W_{\bar\w}^\upalpha$ is S-shaped, and if $(\lb_-)_{\w_0}$ and $(\lb^+)_{\w_0}$
are defined as in Theorem {\rm\ref{th:3Dbifur}(v)}, then $\bar\lb\in
((\lb_-)_{\w_0},(\lb^+)_{\w_0})=(\lb_u(\w_0),\lb_l(\w_0))$
for all $\w_0\in\W^{\upalpha}_{\bar\w}$.
\end{itemize}
In particular, if $\W=\W^{\upalpha}_{\bar\w}$, then {\rm(a)} holds.
\end{prop}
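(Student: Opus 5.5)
Assume $\{\muk_{\bar\lb}\}$ is the hyperbolic copy; the case of $\{\ml_{\bar\lb}\}$ is symmetric. The attractiveness comes from the remark after \eqref{def:4lb*}. The plan is to repeat the proof of Theorem \ref{teor:4p123} with $\bar\lb\in(\lb_l^*,\lb_u^*)$. The main step is to establish \eqref{eq:apartirdeaqui}, namely $\inf_{t\in\R}\big(\muk_{\bar\lb}(\bwt)-\ml_{\bar\lb}(\bwt)\big)=0$.

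Suppose instead that this infimum is positive. As in that proof, $\{\muk_{\bar\lb}\}$ and $\mathrm{closure}_{\WR}\{(\bwt,\ml_{\bar\lb}(\bwt))\,|\;t\in\R\}$ are then disjoint ordered compact $\tau_{\bar\lb}$-invariant sets projecting onto $\W$. Corollary \ref{coro:3noS} forces the whole diagram to be S-shaped, and Remark \ref{rm:4p1} then gives $\lb_u^*=\lb_-<\lb^+=\lb_l^*$, which contradicts \ref{p3}. This is the only point where the hypothesis \ref{p3} enters, and it is the step I expect to need the most care. Next I apply \cite[Propositions 2.7 and 2.6(i)]{dno4}: either (a) holds, or $\inf_{t\le0}\big(\muk_{\bar\lb}(\bwt)-\ml_{\bar\lb}(\bwt)\big)>0$. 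In the second case the infimum over $t\ge0$ vanishes, and the uniform stability of $\{\muk_{\bar\lb}\}$ gives $\lim_{t\to\infty}\big(\muk_{\bar\lb}(\bwt)-\ml_{\bar\lb}(\bwt)\big)=0$. The two alternatives are clearly mutually exclusive.

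In case (b), the set $\{\muk_{\bar\lb}|_{\W^\upalpha_{\bar\w}}\}$ and the $\upalpha$-limit set of $(\bar\w,\ml_{\bar\lb}(\bar\w))$ are disjoint ordered compact invariant sets over $\W^\upalpha_{\bar\w}$, the upper one being an attractive hyperbolic copy of the base. Theorem \ref{th:3casosS}(c) or (e), combined with Corollary \ref{coro:3noS} applied to the restricted family, then shows that the diagram on $\W^\upalpha_{\bar\w}$ is S-shaped, as is its restriction to $\W_{\w_0}$ for each $\w_0\in\W^\upalpha_{\bar\w}$. Theorem \ref{th:3Dbifur} together with Remark \ref{rm:4p1}, both applied on $\W_{\w_0}$, identifies $(\lb_-)_{\w_0}=\lb_u(\w_0)$ and $(\lb^+)_{\w_0}=\lb_l(\w_0)$.

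It remains to place $\bar\lb$ in the open interval rather than in the half-open one obtained in Theorem \ref{teor:4p123}. We have two disjoint compact invariant sets over $\W_{\w_0}$: the restricted attractive copy of the base, and the part of the $\upalpha$-limit set lying over $\W_{\w_0}$. By Theorem \ref{th:3Dbifur}(ii), no such pair can exist at the endpoints $(\lb_-)_{\w_0}$ or $(\lb^+)_{\w_0}$, because there the upper (respectively lower) attractive copy loses hyperbolicity while $\mm$ meets it. Outside the closed interval the attractor is a single copy. Hence $\bar\lb$ lies in the open interval.

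For the last assertion, if $\bar\w\in\W^\upalpha_{\bar\w}$, then the continuity of $\muk_{\bar\lb}$ and the lower semicontinuity of $\ml_{\bar\lb}$ extend the bound $\inf_{t\le0}\big(\muk_{\bar\lb}(\bwt)-\ml_{\bar\lb}(\bwt)\big)>0$ to all $t\in\R$. This contradicts the limit as $t\to\infty$, so (a) holds, exactly as at the end of the proof of Theorem \ref{teor:4p123}.
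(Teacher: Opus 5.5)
Up to the S-shape conclusion, your argument is the paper's. The pieces are the contradiction via Corollary~\ref{coro:3noS} and Remark~\ref{rm:4p1}, the dichotomy from \cite{dno4}, and the $\upalpha$-limit set of $(\bar\w,\ml_{\bar\lb}(\bar\w))$ combined with Theorem~\ref{th:3casosS}. Your semicontinuity proof of the final assertion is a valid alternative to the paper's remark that $\w_0=\bar\w$ would put $\bar\lb$ in $(\lb_u^*,\lb_l^*]$, which is empty under \ref{p3}.

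The gap is the step placing $\bar\lb$ in the \emph{open} interval, which fails at the right endpoint. By Theorem~\ref{th:3Dbifur}(ii), $\{\muk_\lb\}$ is an attractive hyperbolic copy for every $\lb>(\lb_-)_{\w_0}$, including $\lb=(\lb^+)_{\w_0}$. At that value the set $\mK^+$ of Theorem~\ref{th:3casosS}(e) lies strictly below it. At $(\lb^+)_{\w_0}$ it is the lower copy that degenerates, not the one you are given. So the configuration ``hyperbolic upper copy plus a disjoint compact invariant set below it'' occurs exactly for $\lb\in((\lb_-)_{\w_0},(\lb^+)_{\w_0}]$. Your reasoning therefore only yields this half-open interval, as in Theorem~\ref{teor:4p123}. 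Also, your remark about values beyond the endpoints is inaccurate: the attractor need not be a copy there when $\lb_\diamond<\lb_-$ or $\lb^+<\lb^\diamond$. The exclusion of those values should come from Theorem~\ref{th:3casosS}(c),(e), which you had already invoked.

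A better argument cannot remove the endpoint. Let $\phi\colon\R\to[-1,1]$ vanish on $(-\infty,0]$ and, for $t\ge1$, alternate between $1$ and $-1$ on intervals of increasing length, like $f_3$ in Section~\ref{sec:5}. Take $\mh(\w,x)=-x^3+x+\w(0)$ on its hull, whose minimal sets are $\{\bar0\}$, $\{\bar1\}$ and $\{-\bar1\}$. With $c=2/(3\sqrt3)$, Lemma~\ref{lema:infinity}(ii) gives $\lb_l^*\le\lb_l(\bar1)=c-1<c<1-c=\lb_u(-\bar1)\le\lb_u^*$. Hence \ref{p3} holds and $c\in(\lb_l^*,\lb_u^*)$.

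At $\lb=c$, for $t\le0$ the equation along $\phi$ is $x'=-x^3+x+c$. So $\muk_c(\phi{\cdot}t)=2/\sqrt3$ and $\ml_c(\phi{\cdot}t)=-1/\sqrt3$, its saddle-node, for all $t\le0$. The equations over $\pm\bar1$ have a unique hyperbolic equilibrium, so the fibers of $\mA_c$ over $\W^\upomega_\phi$ are singletons. Hence the closure of the graph of $\muk_c$ has singleton fibers and negative Lyapunov exponents at the three Dirac measures. Theorem~\ref{th:3copia} then makes $\{\muk_c\}$ an attractive hyperbolic copy of $\W$. Yet (b) holds with $\W^\upalpha_\phi=\{\bar0\}$ and $\bar\lb=c=(\lb^+)_{\bar0}$.

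The paper's proof simply asserts the open interval after citing Theorem~\ref{th:3casosS}, so this defect is in the statement and is shared by both arguments. What is actually proved is $\bar\lb\in((\lb_-)_{\w_0},(\lb^+)_{\w_0}]$ when $\{\muk_{\bar\lb}\}$ is hyperbolic, and $[(\lb_-)_{\w_0},(\lb^+)_{\w_0})$ when $\{\ml_{\bar\lb}\}$ is. The dichotomy and the final assertion stand.
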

\begin{proof} Let us work in the case where
$\{\muk_{\bar\lb}\}$ is an attractive hyperbolic copy of $\W$.
We assume for contradiction that
$\inf_{t\in\R}\big(\muk_{\bar\lb}(\bwt)-\ml_{\bar\lb}(\bwt)\big)>0$.
Then, $\mK_l:=\mathrm{closure}_{\W\times\R}\{(\w,\ml_{\bar\lb}(\w))\,|\;\,
\w\in\W\}$ is a
compact $\tau_{\bar\lb}$-invariant set strictly below $\{\muk_{\bar\lb}\}$. Hence,
Corollary \ref{coro:3noS} ensures that we are in the situation described on
Theorem \ref{th:3Dbifur}, and Remark \ref{rm:4p1} yields $\lb_l^*<\lb_u^*$,
which is not the case if \ref{p3} holds. Therefore,
\[
 \inf_{t\in\R}\big(\muk_{\bar\lb}(\bwt)-\ml_{\bar\lb}(\bwt)\big)=0\,.
\]
Reasoning again as in the proof of Theorem \ref{teor:4p123} (after proving
\eqref{eq:apartirdeaqui}),
we deduce: that either $\ml_{\bar\lb}(\bwt)=\muk_{\bar\lb}(\bwt)$ for all $t\in\R$,
and hence (a) holds; or
$\delta:=\inf_{t\le 0}\big(\muk_{\bar\lb}(\bwt)-\ml_{\bar\lb}(\bwt)\big)>0$ and
$\lim_{t\to\infty}\big(\muk_{\bar\lb}(\bwt)-\ml_{\bar\lb}(\bwt)\big)=0$,
and so the closure of the graph of $\ml_{\bar\lb}|_{\W^{\upalpha}_{\bar\w}}$
is a compact $\tau_{\bar\lb}$-invariant set projecting onto $\W^{\upalpha}_{\bar\w}$
and strictly below and $\muk_{\bar\lb}|_{\W^{\upalpha}_{\bar\w}}$. In this second case,
Theorem \ref{th:3casosS} ensures that the bifurcation diagram on $\W^{\upalpha}_{\bar\w}$
is S-shaped, with $\bar\lb\in ((\lb_-)_{\w_0},(\lb^+)_{\w_0})=(\lb_u(\w_0),\lb_l(\w_0))$
for all $\w_0\in\W^{\upalpha}_{\bar\w}$. Clearly, this last possibility cannot hold
for $\w_0=\bar\w$, which proves the last assertion. The arguments are analogous if
$\{\ml_{\bar\lb}\}$ is an attractive hyperbolic copy of $\W$.
\end{proof}
Again, the last assertion restricts the type of base spaces $\W$ for which
the situation (b) of Proposition~\ref{prop:4p3} may arise: for instance,
it cannot occur if $\W$ is minimal.
We will give in Section \ref{sec:5} an example of case \ref{p3}
fitting the hypotheses of the previous result and in situation (a),
and the same ideas can be adapted to get an example of situation (b).
\subsection{Jump bifurcations}\label{subsec:jumpbifurcations}
We now define \emph{jump bifurcation as $t \to \infty$}, and we use the
dynamical properties previously described to determine when such bifurcations
can occur. Recall that we are assuming that $\W$ is transitive and that this
ensures the existence of a residual $\sigma$-invariant
subset of points $\bar\w\in\W$ with dense $\sigma$-orbit.
\begin{defi} \label{def:4jump}
Let $\lb_0\in\R$ be a loss-of-hyperbolicity bifurcation point
for \eqref{eq:4hlb-w_general}. A {\em jump bifurcation as $t\to\infty$ occurs at\/} $\lb_0$,
or {\em $\lb_0$ is a jump bifurcation point as $t\to\infty$},
if there exist a $\sigma$-invariant residual subset
$\W_0\subseteq\W$ and a constant $\rho>0$ such that at least one of the next conditions holds:
\begin{eqnarray}
 \liminf_{t\to\infty}\big(\muk_{\lb_0+\ep}(\wt)-\muk_{\lb_0-\ep}(\wt)\big)\ge\rho
 &\quad \text{for all $\w\in\W_0$ and $\ep>0$}\,,\label{eq:4u_lbjumps}\\
 \liminf_{t\to\infty}\big(\ml_{\lb_0+\ep}(\wt)-\ml_{\lb_0-\ep}(\wt)\big)\ge\rho
 & \quad \text{for all $\w\in\W_0$ and $\ep>0$}\,.\label{eq:4l_lbjumps}
\end{eqnarray}
\end{defi}
This definition aims to distinguish those loss-of-hyperbolicity bifurcation points which can
which can trigger critical transitions in d-concave equations
when the parameter $\lb$ is substituted
by a parameter-dependent parameter shift $\Lambda_c(t)$: see, e.g.,
\cite{awvc,apw2017,alk2018,dno4}.
At first glance, the definition may appear unnecessarily complicated; however,
it is satisfied precisely in those cases where the notion of a ``jump'' is unambiguous,
unlike other simpler attempts of definition.
The example of case \ref{p3} with a jump bifurcation described in Section \ref{sec:5}
contributes to clarify the complexity of this matter.
\begin{nota}\label{rm:4nojump}
If $\{\muk_{\lb_0}\}$ is an attractive hyperbolic copy of $\W$,
then Lemma \ref{lema:hypconttapas} yields
$\lim_{\lb\to\lb_0}\muk_\lb(\w)=\muk_{\lb_0}(\w)$ uniformly on $\W$, therefore
precluding \eqref{eq:4u_lbjumps}.
Analogously, the hyperbolicity of $\{\ml_{\lb_0}\}$ precludes \eqref{eq:4l_lbjumps}.
\end{nota}

The following theorem shows that, in case \ref{p1}, a jump bifurcation can occur if
and only if the bifurcation diagram for the family \eqref{eq:4hlb-w_general} is S-shaped,
and that, in this situation, it occurs at both classical bifurcation points.
\begin{teor}\label{teor:4jumpbifurcationp1}
Assume that \ref{p1} holds.
Then, \eqref{eq:4hlb-w_general} has a jump bifurcation
as $t\to\infty$ if and only if situation {\rm(b)} of Theorem {\rm\ref{teor:4p1}} holds;
i.e., if and only if the bifurcation diagram is S-shaped.
In this case, the unique two jump bifurcation points are $\lb_l^*$ and $\lb_u^*$.
More precisely, \eqref{eq:4u_lbjumps} holds if and only if $\lb_0=\lb_u^*$ and
\eqref{eq:4l_lbjumps} holds if and only if $\lb_0=\lb_l^*$.
\end{teor}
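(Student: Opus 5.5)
Under \ref{p1}, Theorem \ref{teor:4p1} leaves exactly three situations, and I would treat them separately. In (a) there are no loss-of-hyperbolicity bifurcation points, hence no jump points. In (b), the S-shaped case, I have to show that \eqref{eq:4u_lbjumps} holds exactly at $\lb_u^*=\lb_-$, that \eqref{eq:4l_lbjumps} holds exactly at $\lb_l^*=\lb^+$, and that neither holds anywhere else. In (c), I have to exclude both conditions at every $\lb_0$.

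\emph{Exclusion away from the special values.} By Lemma \ref{lema:hypconttapas}, $\{\muk_\lb\}$ is a hyperbolic copy of $\W$ for $\lb>\lb_u^*$ and $\{\ml_\lb\}$ is one for $\lb<\lb_l^*$. Remark \ref{rm:4nojump} therefore rules out \eqref{eq:4u_lbjumps} at any $\lb_0>\lb_u^*$ and \eqref{eq:4l_lbjumps} at any $\lb_0<\lb_l^*$. Now take $\lb_0\le\lb_u^*<\lb_l^*$. For small $\ep$, both $\lb_0\pm\ep$ lie below $\lb_l^*$. Fix $\w$ with dense orbit. If $\lb_0+\ep$ is also below $\lb_u^*$, Theorem \ref{teor:4p123}(i) gives $\muk_{\lb_0\pm\ep}(\wt)-\ml_{\lb_0\pm\ep}(\wt)\to0$. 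The map $\lb\mapsto\ml_\lb$ is uniformly continuous in $C(\W,\R)$ on $(-\infty,\lb_l^*)$, so $\ml_{\lb_0+\ep}-\ml_{\lb_0-\ep}$ is uniformly small. Combining these gives $\liminf_{t\to\infty}(\muk_{\lb_0+\ep}-\muk_{\lb_0-\ep})(\wt)$ arbitrarily small, which contradicts \eqref{eq:4u_lbjumps} when $\lb_0<\lb_u^*$. The symmetric argument excludes \eqref{eq:4l_lbjumps} at every $\lb_0>\lb_l^*$. The points with dense orbit are residual, so they meet any residual $\W_0$.

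\emph{Situation (c).} Here the remaining claim is that no jump occurs at $\lb_u^*$ or at $\lb_l^*$. Take $\lb_0=\lb_u^*$ and $\ep$ small. Theorem \ref{teor:4p1}(c) gives $\muk_{\lb_0+\ep}(\wt)-\ml_{\lb_0+\ep}(\wt)\to0$ for every $\w$ with dense orbit. Theorem \ref{teor:4p123}(i) gives the same for $\lb_0-\ep$. Continuity of $\ml$ at $\lb_0$ in $C(\W,\R)$ then shows that the liminf in \eqref{eq:4u_lbjumps} tends to $0$ as $\ep\to0$, so \eqref{eq:4u_lbjumps} fails. The point $\lb_l^*$ is handled the same way.

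\emph{Situation (b).} Now I must prove that \eqref{eq:4u_lbjumps} does hold at $\lb_-$. For $\lb\in(\lb_-,\lb^+)$, the copies $\muk_\lb>\mm_\lb$ exist and $\mm_\lb$ is decreasing in $\lb$. Hence $\muk_{\lb_-+\ep}\ge\mm_{\lb_-+\ep}$, which dominates $\mm_{\lb'}$ for $\lb'$ near $\lb^+$, and this is a fixed continuous copy. For $\lb<\lb_-$, Theorem \ref{teor:4p123}(i) gives $\muk_{\lb_--\ep}(\wt)-\ml_{\lb_--\ep}(\wt)\to0$ along dense orbits. Also $\ml_{\lb_--\ep}\le\ml_{\lb_-}<\mm_{\lb_-}$, with a separation that is uniform because $\mm_\lb$ is repulsive and separated from $\ml_\lb$ uniformly for $\lb$ near $\lb_-$. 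The delicate step is bounding $\liminf(\muk_{\lb_-+\ep}-\muk_{\lb_--\ep})$ below by a single $\rho$. My plan is to take $\rho:=\inf_\W(\mm_{\lb_0}-\ml_{\lb_0})/2$ for some fixed $\lb_0\in(\lb_-,\lb^+)$, and to use that $\ml_\lb<\ml_{\lb_0}$ for $\lb<\lb_-$ and $\muk_\lb>\mm_\lb>\mm_{\lb_0}$ for $\lb\in(\lb_-,\lb_0)$. This is the main obstacle: for large $\ep$ one has to rely on monotonicity in $\ep$, since the left side of \eqref{eq:4u_lbjumps} is increasing in $\ep$ by Remark \ref{rm:3pre}.1. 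The point $\lb_l^*$ is treated symmetrically, giving \eqref{eq:4l_lbjumps}.
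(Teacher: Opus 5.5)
Your proposal is correct and follows essentially the same route as the paper. It uses the same split into situations (a), (b), (c) of Theorem~\ref{teor:4p1}; the same exclusion of the non-special values via Remark~\ref{rm:4nojump}, Theorem~\ref{teor:4p123}(i) and continuity of $\lb\mapsto\ml_\lb$ (resp.~$\muk_\lb$), where only continuity at $\lb_0$ is needed, not uniform continuity on the whole half-line; the same use of Theorem~\ref{teor:4p1}(c) in case (c); and the same monotonicity argument in case (b). Your constant $\rho=\inf_\W(\mm_{\lb_0}-\ml_{\lb_0})/2$ just makes explicit why the paper's $\rho=\inf_\W(\muk_{\lb_u^*}-\ml_{\lb_u^*})$ is positive, namely $\muk_{\lb_u^*}\ge\mm_{\lb_0}$ and $\ml_{\lb_u^*}\le\ml_{\lb_0}$.
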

\begin{proof}
Theorem \ref{teor:4p1}(a) precludes jump bifurcation points,
since there are no loss-of-hyperbolicity bifurcation points.

Assume now that the situation fits (b) or (c) of Theorem \ref{teor:4p1}.
Let us check that \eqref{eq:4u_lbjumps} cannot hold for any $\lb_0\ne\lb_u^*$.
First, if $\lb_0>\lb_u^*$, then \eqref{eq:4u_lbjumps} is precluded by the
hyperbolicity of $\muk_{\lb_0}$: see Remark \ref{rm:4nojump}.
Second, if $\lb_0<\lb_u^*$, then Theorem \ref{teor:4p123}(i) yields
$\lim_{t\to\infty}\big(\muk_{\lb_0+\ep}(\bwt)-\ml_{\lb_0+\ep}(\bwt)\big)=0$ and
$\lim_{t\to\infty}\big(\muk_{\lb_0-\ep}(\bwt)-\ml_{\lb_0-\ep}(\bwt)\big)=0$
for any $\bar\w$ in the residual $\sigma$-invariant set of points of dense orbit on
$\W$ if $\ep>0$ is small enough.
So, $\liminf_{t\to\infty}\big(\muk_{\lb_0+\ep}(\bwt)-\muk_{\lb_0-\ep}(\bwt)\big)=
\liminf_{t\to\infty}\big(\ml_{\lb_0+\ep}(\bwt)-\ml_{\lb_0-\ep}(\bwt)\big)$.
Since the intersection of two residual sets is residual,
\eqref{eq:4u_lbjumps} is precluded by the hyperbolicity of $\{\ml_{\lb_0}\}$:
see again Remark \ref{rm:4nojump}. The same argument shows that
\eqref{eq:4l_lbjumps} cannot hold for any $\lb_0\ne\lb_l^*$.
In addition, if (c) holds and $\lb_0=\lb_u^*$ and $\ep>0$ is small,
the description of Theorem \ref{teor:4p1}(c)
states that $\lim_{t\to\infty}\big(\muk_{\lb_0+\ep}(\bwt)-\ml_{\lb_0+\ep}(\bwt)\big)=0$
for any $\bar\w$ in the residual $\sigma$-invariant set of points of dense orbit on
$\W$, Theorem \ref{teor:4p123}(i)
ensures that $\lim_{t\to\infty}\big(\muk_{\lb_0-\ep}(\bwt)-\ml_{\lb_0-\ep}(\bwt)\big)=0$,
and we conclude as above that \eqref{eq:4u_lbjumps} does not hold. And, also in case (c),
we prove similarly that $\lb_0=\lb_l^*$ does not satisfy \eqref{eq:4l_lbjumps}.

Altogether, we conclude that, in case (c), there are no jump bifurcation
points, and that $\lb_l^*$ and $\lb_u^*$ are the unique possible ones in case (b).
Let us check that \eqref{eq:4u_lbjumps} actually holds in case (b) at $\lb_0=\lb_u^*$.
We take $\bar\w$ with dense orbit, apply Theorem \ref{teor:4p123}(i) to get
$\lim_{t\to\infty}\big(\muk_{\lb_0-\ep}(\bwt)-
\ml_{\lb_0-\ep}(\bwt)\big)=0$, and conclude from the strictly increasing character of
$\lb\mapsto\ml_\lb$ and $\lb\mapsto\muk_\lb$ that
\[
\begin{split}
 \liminf_{t\to\infty}\big(\muk_{\lb_0+\ep}(\bwt)-\muk_{\lb_0-\ep}(\bwt)\big)
 &=\liminf_{t\to\infty}
 \big(\muk_{\lb_0+\ep}(\bwt)-\ml_{\lb_0-\ep}(\bwt)\big)\\
 &\ge\liminf_{t\to\infty}
 \big(\muk_{\lb_0}(\bwt)-\ml_{\lb_0}(\bwt)\big)\ge\rho
\end{split}
\]
for $\rho:=\inf_{\w\in\W}\big(\muk_{\lb_0}(\w)-\ml_{\lb_0}(\w)\big)$,
which is independent of $\bar\w$: \eqref{eq:4u_lbjumps} holds.
An analogous arguments shows that \eqref{eq:4l_lbjumps} holds in case (b) for $\lb_0=\lb_l^*$.
\end{proof}
The next result shows that, in cases \ref{p2} and \ref{p3}, a jump bifurcation
can only occur at $\lb_0\in[\lb_l^*,\lb_u^*]$ (which means for $\lb_0=\lb_l^*=\lb_u^*$
in case \ref{p2}), and gives a condition under which a jump bifurcation actually occurs
at $\lb_l^*$ or at $\lb_u^*$.
\begin{teor}\label{teor:4jumpbifurcationp2p3}
\begin{itemize}
\item[\rm(i)] Assume that \ref{p2} or \ref{p3} holds.
If $\lb_0\in\R$ is a jump bifurcation point for \eqref{eq:4hlb-w_general} as $t\to\infty$,
then $\lb_0\in[\lb_l^*,\lb_u^*]$. In addition, if $\lb_1=\lb_l^*$ or $\lb_1=\lb_u^*$
and there exists a $\sigma$-invariant residual subset
$\W_r\subseteq\W$ and a constant $\rho>0$ with
\begin{equation}\label{eq:4equivp2}
 \liminf_{t\to\infty}\big(\muk_{\lb_1}(\wt)-\ml_{\lb_1}(\wt)\big)\ge\rho
 \quad\text{for all $\w\in\W_r$}\,,
\end{equation}
then $\lb_1$ is a jump bifurcation point as $t\to\infty$.
\item[\rm(ii)]Assume that \ref{p3} holds and that, for every $\lb\in(\lb_l^*,\lb_u^*)$,
either $\{\ml_\lb\}$
or $\{\muk_\lb\}$ is a hyperbolic copy of $\W$.
If $\lb_0\in\R$ is a jump bifurcation point of \eqref{eq:4hlb-w_general}
as $t\to\infty$, then $\lb_0=\lb_l^*$ or $\lb_0=\lb_u^*$.
\end{itemize}
\end{teor}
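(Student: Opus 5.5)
Part (i), first assertion, follows the exclusion argument in the proof of Theorem~\ref{teor:4jumpbifurcationp1}. In cases \ref{p2} and \ref{p3} we have $\lb_l^*\le\lb_u^*$. Suppose $\lb_0>\lb_u^*=\max\{\lb_u^*,\lb_l^*\}$.

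\begin{itemize}
\item \eqref{eq:4u_lbjumps} is precluded by Remark~\ref{rm:4nojump}, since $\{\muk_{\lb_0}\}$ is hyperbolic (Lemma~\ref{lema:hypconttapas}).
\item For \eqref{eq:4l_lbjumps}, take $\ep>0$ small so that $\lb_0\pm\ep>\max\{\lb_u^*,\lb_l^*\}$. Theorem~\ref{teor:4p123}(i) gives $\muk_{\lb_0\pm\ep}(\bwt)-\ml_{\lb_0\pm\ep}(\bwt)\to0$ for every $\bar\w$ with dense orbit. Hence
\[
\liminf_{t\to\infty}\big(\ml_{\lb_0+\ep}(\bwt)-\ml_{\lb_0-\ep}(\bwt)\big)=\liminf_{t\to\infty}\big(\muk_{\lb_0+\ep}(\bwt)-\muk_{\lb_0-\ep}(\bwt)\big).
\]
\item Uniform continuity of $\lb\mapsto\muk_\lb$ at $\lb_0$ (Remark~\ref{rm:4nojump}) makes the right-hand side arbitrarily small as $\ep\to0$.
\item Any residual $\W_0$ meets the residual set of points with dense orbit, so \eqref{eq:4l_lbjumps} fails for some $\w\in\W_0$ and small $\ep$.
\end{itemize}

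Note that the definition requires the bound for all $\ep>0$, so exhibiting one small $\ep$ that fails suffices. The case $\lb_0<\lb_l^*$ is symmetric, using $\min\{\lb_u^*,\lb_l^*\}$, the hyperbolicity of $\{\ml_{\lb_0}\}$, and the resp.\ part of Theorem~\ref{teor:4p123}.

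For the sufficient condition, take $\lb_1=\lb_u^*$ (the other case is analogous). By Proposition~\ref{prop:4loss}, $\lb_1$ is a loss-of-hyperbolicity point. Fix $\ep>0$ and $\w\in\W_r\cap\{\text{dense orbit}\}$, which is still residual and $\sigma$-invariant. We want to show that either \eqref{eq:4u_lbjumps} or \eqref{eq:4l_lbjumps} holds with the same $\rho$.

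The key case split is the position of $\lb_1-\ep$ relative to $\lb_l^*$. In case \ref{p2}, $\lb_1-\ep<\lb_l^*=\min$, so Theorem~\ref{teor:4p123}(i) gives $\muk_{\lb_1-\ep}(\wt)-\ml_{\lb_1-\ep}(\wt)\to0$. Monotonicity then yields
\[
\liminf_{t\to\infty}\big(\muk_{\lb_1+\ep}(\wt)-\muk_{\lb_1-\ep}(\wt)\big)=\liminf_{t\to\infty}\big(\muk_{\lb_1+\ep}(\wt)-\ml_{\lb_1-\ep}(\wt)\big)\ge\liminf_{t\to\infty}\big(\muk_{\lb_1}(\wt)-\ml_{\lb_1}(\wt)\big)\ge\rho ,
\]
exactly as in Theorem~\ref{teor:4jumpbifurcationp1}.

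Case \ref{p3} is the main obstacle, since for small $\ep$ the value $\lb_1-\ep$ lies in $(\lb_l^*,\lb_u^*)$ and no collapse is available there. The plan is to switch which equilibrium jumps, using the two-sided parameter values instead.

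\begin{itemize}
\item At $\lb_1+\ep>\max$, Theorem~\ref{teor:4p123}(i) gives $\muk_{\lb_1+\ep}-\ml_{\lb_1+\ep}\to0$ along $\wt$.
\item Then
\[
\liminf_{t\to\infty}\big(\ml_{\lb_1+\ep}(\wt)-\ml_{\lb_1-\ep}(\wt)\big)=\liminf_{t\to\infty}\big(\muk_{\lb_1+\ep}(\wt)-\ml_{\lb_1-\ep}(\wt)\big)\ge\liminf_{t\to\infty}\big(\muk_{\lb_1}(\wt)-\ml_{\lb_1}(\wt)\big)\ge\rho .
\]
\item So \eqref{eq:4l_lbjumps} holds at $\lb_1=\lb_u^*$.
\item Symmetrically, at $\lb_1=\lb_l^*$, Theorem~\ref{teor:4p123}(i) applied below the minimum gives \eqref{eq:4u_lbjumps}.
\end{itemize}

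This argument actually works uniformly in both cases \ref{p2} and \ref{p3}, and I would use it throughout.

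For part (ii), it remains to exclude $\lb_0\in(\lb_l^*,\lb_u^*)$. For such $\lb_0$ and small $\ep$, all of $\lb_0,\lb_0\pm\ep$ lie in the open interval. By hypothesis, at $\lb_0$ one of $\{\ml_{\lb_0}\}$, $\{\muk_{\lb_0}\}$ is hyperbolic, say $\{\muk_{\lb_0}\}$. Remark~\ref{rm:4nojump} then kills \eqref{eq:4u_lbjumps}.

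For \eqref{eq:4l_lbjumps}, apply Proposition~\ref{prop:4p3} at $\lb_0\pm\ep$ for a dense-orbit $\bar\w$. In both alternatives (a) and (b), $\muk-\ml\to0$ along $\bar\w{\cdot}t$. The hypothesis of Proposition~\ref{prop:4p3} holds at each parameter by assumption. As above, this reduces the $\liminf$ for $\ml$ to that for $\muk$, which is small by the uniform continuity from Lemma~\ref{lema:hypconttapas} near $\lb_0$. One point to check is that the hyperbolic branch at $\lb_0$ persists to $\lb_0\pm\ep$: this holds since Lemma~\ref{lema:hypconttapas} gives $\muk_\lb$ hyperbolic and uniformly continuous on a neighbourhood of $\lb_0$. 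The case where $\{\ml_{\lb_0}\}$ is the hyperbolic one is symmetric.
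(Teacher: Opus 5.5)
Your proposal is correct and follows essentially the same route as the paper. You exclude jump points outside $[\lb_l^*,\lb_u^*]$, and in (ii) inside $(\lb_l^*,\lb_u^*)$ via Proposition~\ref{prop:4p3}, by collapsing $\muk-\ml$ along dense orbits and invoking Remark~\ref{rm:4nojump}; you prove sufficiency by pairing $\lb_l^*$ with \eqref{eq:4u_lbjumps}, using the collapse at $\lb_l^*-\ep<\min\{\lb_u^*,\lb_l^*\}$, and $\lb_u^*$ with \eqref{eq:4l_lbjumps}, using the collapse at $\lb_u^*+\ep>\max\{\lb_u^*,\lb_l^*\}$. Your remark that the collapse must be taken on the outer side of the interval is what makes the argument work in case \ref{p3}, and this is the pairing the paper's argument actually establishes, even though its write-up swaps the labels $\lb_l^*$ and $\lb_u^*$ in its final sentences.
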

\begin{proof}
(i) If $\lb_0>\lb_u^*$, the hyperbolicity of $\muk_{\lb_0}$ directly
precludes \eqref{eq:4u_lbjumps}
(see Remark \ref{rm:4nojump}), and combined with Theorem \ref{teor:4p123}(i) precludes
\eqref{eq:4l_lbjumps}, as in the proof of Theorem \ref{teor:4jumpbifurcationp1}.
A similar argument for $\lb_0<\lb_l^*$ proves that
there are no jump bifurcation points outside $[\lb_l^*,\lb_u^*]$.
Let us take $\lb_1=\lb_l^*$ and assume \eqref{eq:4equivp2}.
The monotonicity of $\lb\mapsto\ml_\lb,\muk_\lb$ (see Remark \ref{rm:3pre}.1) yields
\[
 \liminf_{t\to\infty}\big(\muk_{\lb_1+\ep}(\wt)-\ml_{\lb_1-\ep}(\wt)\big)\ge
 \liminf_{t\to\infty}\big(\muk_{\lb_1}(\wt)-\ml_{\lb_1}(\wt)\big)\ge\rho>0
\]
for all $\w\in\W_r$. If $\W_0\subseteq\W_r$ is a residual subset of $\W$ of points
with dense orbit, Theorem \ref{teor:4p123}(i) ensures that
$\lim_{t\to\infty}\big(\muk_{\lb_1-\ep}(\wt)-\ml_{\lb_1-\ep}(\wt)\big)=0$
for all $\w\in\W_0$, and hence
\[
\begin{split}
 &\liminf_{t\to\infty}\big(\muk_{\lb_1+\ep}(\wt)-\muk_{\lb_1-\ep}(\wt)\big)\\
 &\qquad=\liminf_{t\to\infty}\big(\muk_{\lb_1+\ep}(\wt)-\ml_{\lb_1-\ep}(\wt)
 -(\muk_{\lb_1-\ep}(\wt)-\ml_{\lb_1-\ep}(\wt))\big)\ge\rho
\end{split}
\]
for all $\w\in\W_0$ and $\ep>0$. That is, \eqref{eq:4u_lbjumps} holds,
which combined with Proposition \ref{prop:4loss} shows that $\lb_u^*$ is a jump
bifurcation point. The argument is analogous if \eqref{eq:4equivp2} holds
for $\lb_1=\lb_l^*$.
\smallskip

(ii) We only have to check that \eqref{eq:4u_lbjumps} and \eqref{eq:4l_lbjumps} are
precluded for $\lb_0\in(\lb_l^*,\lb_u^*)$, what we do by reasoning as in the proof
of Theorem \ref{teor:4jumpbifurcationp1} using
the information provided by Proposition \ref{prop:4p3}.
\end{proof}

Note that condition \eqref{eq:4equivp2} is satisfied for all $\lb_0\in\R$ for which
there exist two disjoint $\tau_{\lb_0}$-invariant compact sets.
Examples of case \ref{p2} fulfilling this condition will be constructed in Section \ref{sec:6}.
And an example of case \ref{p3} fitting the hypotheses of
Theorem~\ref{teor:4jumpbifurcationp2p3}(ii) for which actually a jump
bifurcation appears, while \eqref{eq:4equivp2} does not hold, is given in
Section \ref{sec:5}.
\subsection{An example of critical transition in case \ref{p1}}\label{subsec:4critical}
We complete this section by describing an example of a critical transition for case \ref{p1},
in the same style as those in \cite{dno3,dno4,dno5}. We discuss it only very briefly, since the simulation
is analogous to those in the mentioned articles.
Examples of critical transitions for cases
\ref{p3} and \ref{p2} will be given in Sections \ref{subsec:5critical} and \ref{subsec:6critical}.
Now, we consider a case of {\em rate-induced tracking} (see \cite{dno3} and \cite{dlo1}):
the critical transition appears as the rate decreases.

We study a cubic equation of the form
\begin{equation}\label{eq:42CT}
 x'=-x^3+x+a(t)+\G(c\,t)\,,
\end{equation}
where $c>0$ is a rate parameter. Clearly, \ref{d1}, \ref{d2}, \ref{d3}, \ref{d4} and \ref{d5}
are satisfied.
This may represent, for example, the standard cubic simplification of the Landau–Khalatnikov
ferroelectric model in the case of a second-order transition \cite{maslov2021}, where $a(t)+\G(c\,t)$
denotes the external electric field applied to the system.
Here, this field will consist of
a quasiperiodic component, $a$, and a transient component, $\G$.
The observed behavior can be analyzed in terms of the nonautonomous bifurcation diagram of
\begin{equation}\label{eq:42CT_frozen}
 x'=-x^3+x+a(t)+\lb\,.
\end{equation}
We choose $a(t):=0.8\cos(\sqrt{2}\,t)+0.5\cos(\pi\,t)-0.1$ and numerically check that \eqref{eq:42CT_frozen}
has three hyperbolic solutions for $\lb=0.1$, the upper and lower ones being attractive; or,
equivalently (see \cite[Theorem 5.6]{dno4}), that there are three hyperbolic copies of
the base for the skewproduct defined on $\W\times\R$, where $\W$ is the (minimal) hull of $a$.
Therefore, the bifurcation diagram of \eqref{eq:42CT_frozen} corresponds to that of Theorem \ref{th:3Dbifur},
and so it is in case \ref{p1} (see Remark \ref{rm:4p1}).
Now, we choose $\G(t):=-0.1\,e^{-t^2}$. Its asymptotic behavior ensures that $\G$
takes values within the {\em safety interval\/} $(\lb_-,\lb^+)\approx(-0.031,0.231)\ni 0$ for
which Theorem \ref{th:3Dbifur} ensures that
\eqref{eq:42CT_frozen} has three hyperbolic solutions, and it takes values outside
it for a bounded interval of $t$ around $0$.
When we introduce the rate $c$ into the argument of $\G$, the length of this last interval for $\G(c\,t)$
depends on $c$: smaller values of $c$ correspond to smaller intervals of time for which $\G(c\,t)$
is outside the safety interval.

In the upper central part of Figure \ref{fig:42CT}, we represent the unique
solution of \eqref{eq:42CT} which is bounded and tends as $t\to-\infty$ to the upper attractive hyperbolic
solution of the past and future equation $x'=x-x^3+a(t)$: it is proved in \cite[Theorem 3.7]{dno3}
that this solution is unique.
Its behavior drastically changes as $c$ crosses certain threshold: for large values of $c>0$ the
aforementioned solution tends to the same solution of the past and future equation as $t\to\infty$
(there is tracking), while it does not for small values of $c>0$ (and tipping occurs).
The uniqueness of such
tipping value of $c$ in analogous scenarios is explained in \cite[Section 5.1]{dno3} and
\cite[Section V]{dlo1}.
Depending on the context, in the ferroelectric model, this transition may be viewed either as a
useful functional switching event, when polarization reversal is the intended operation of the
ferroelectric device, or as an undesirable critical transition, when the same abrupt jump causes
loss of the stored state, spurious switching, or reduced reliability.
\begin{figure}[ht]
\includegraphics[width=\textwidth]{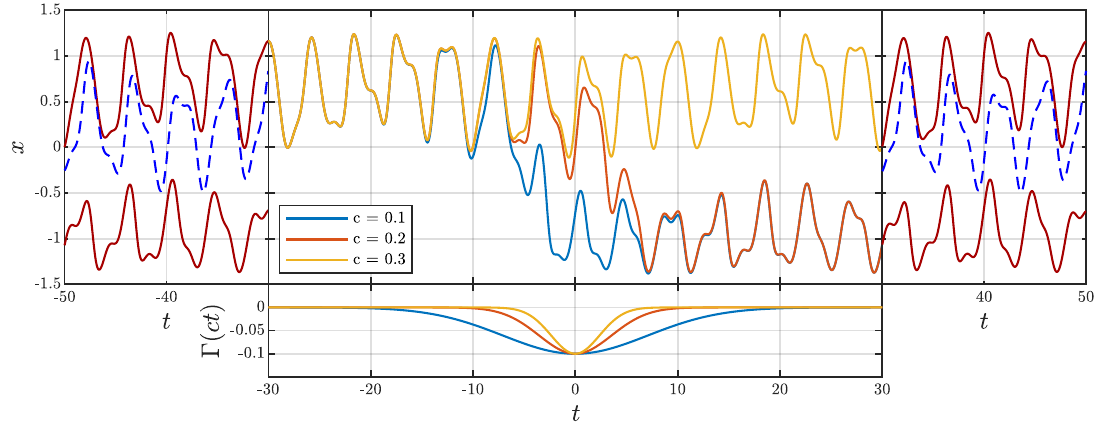}
\caption{In the side panels, approximations to the hyperbolic solutions of the unperturbed
equation \eqref{eq:42CT}: the attractive ones in solid red and the repulsive one in dashed blue.
In the top-center panel, solutions of \eqref{eq:42CT}
with $\mu=1$, $a(t)=0.8\cos(\sqrt{2}\,t)+0.5\cos(\pi t)-0.1$,
$\G(t)=-0.1\,e^{-t^2}$, and $c\in\{0.1,0.2,0.3\}$, obtained from a positive initial condition
after a preliminary integration over a convergence interval and then plotted on $[-30,30]$.
Each one of these solutions approximates the upper bounded solution of \eqref{eq:42CT}
as time decreases, and does the same as time increases if and only if
the rate $c$ is above a certain tipping value. The three upper panels share the same vertical scale.
In the bottom-center panel, the graphs of $\G(ct)$ for the same values of $c$, using the
same color code.}
\label{fig:42CT}
\end{figure}
\section{A non-trivial jump bifurcation in case \ref{p3} and a critical transition}
\label{sec:5}
In continuation with the results of Section \ref{sec:4},
we present an explicit example of case \ref{p3} for which a jump
bifurcation occurs at the point $\lb_l^*$ and for which the ``simpler'' condition
described by \eqref{eq:4equivp2} does not hold.
This example also allows us to present
a numerical simulation that underscores again the close relation between critical
transitions and jump bifurcations.

We study an equation of the type
\begin{equation}\label{eq:5type}
 x'=-\big(x+f_1(t)\big)^3+f_2(t)\,x+f_3(t)+\lb\,,
\end{equation}
where $f_i\colon\R\to\R$ for $i\in\{1,2,3\}$ are the bounded and
uniformly continuous functions depicted in Figure \ref{fig:5Lambdas}
and defined at its caption:
$f_1$ and $f_2$ are monotone functions which are
constant on $(-\infty,0]$ and $[1,\infty)$,
where $f_1$ takes the values $1$ and $0$, and
$f_2$ takes the values $0$ and $3/2^{2/3}$;
$f_3$ takes the value $-4$ on $(-\infty,0]$,
and alternates between $0$ and $2$ on time intervals of increasing length as
$t$ increases, with transition time between both values constantly equal to $1$;
and all of them depend on the map $q(t):=-2t^3+3t^2$.

\begin{figure}[ht]
\includegraphics[width=\textwidth]{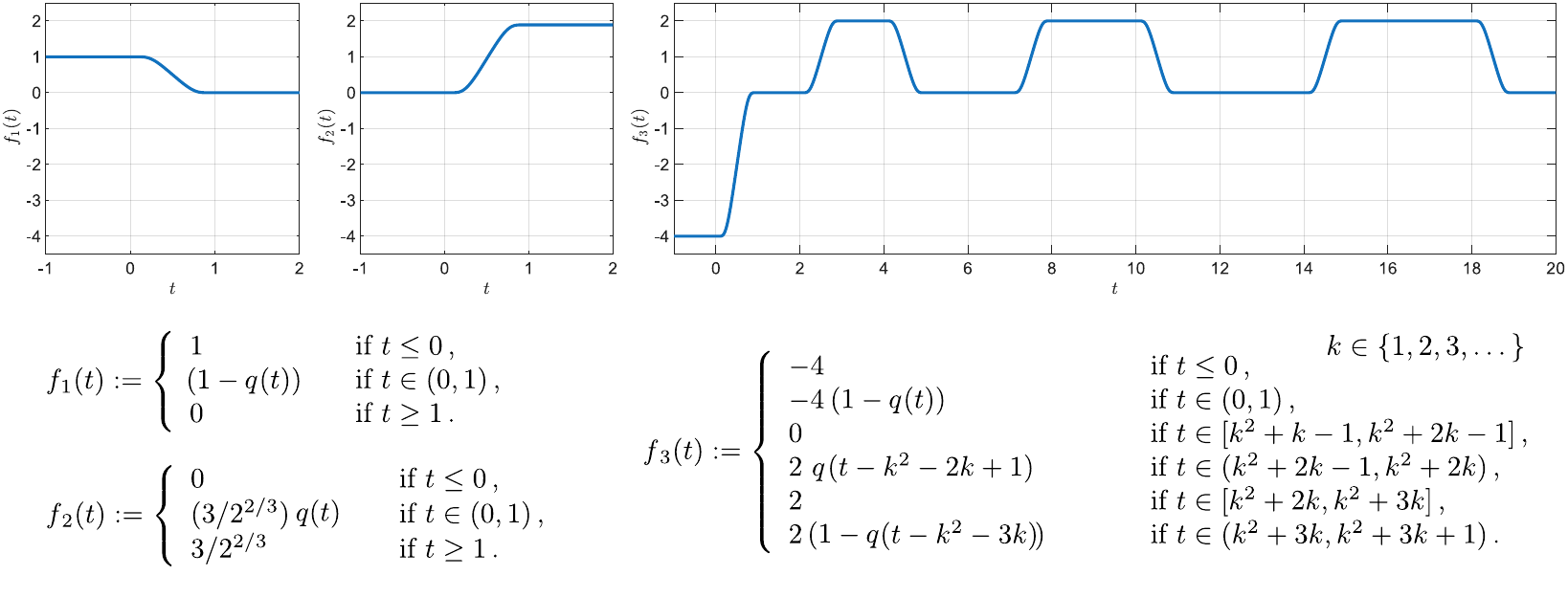}
\caption{Explicit expressions and depiction of $f_1$, $f_2$ and $f_3$.
The map $q(t)$ is the cubic polynomial $q(t):=-2t^3+3t^2$ for $t\in[0,1]$,
for which $q(0)=0$, $q(1)=1$ and $q'(0)=q'(1)=0$, so that $f_i$ are $C^1$.
The results work for any continuous map $q\colon[0,1]\to[0,1]$ with $q(0)=0$ and $q(1)=1$.}
\label{fig:5Lambdas}
\end{figure}

The hull $\W_h\subset C(\R^2,\R)$ of the map
$h(t,x):=-(x+f_1(t))^3+f_2(t)\,x+f_3(t)$ can be
identified with the hull $\W\subset C(\R,\R^3)$
of $f:=(f_1,f_2,f_3)\colon\R
\to\R^3$, given by the closure in the compact-open topology of $C(\R,\R^3)$
of the set $\{f{\cdot}s\,|\;s\in\R\}$, where $f{\cdot}s(t):=f(s+t)$ (see Section \ref{subsec:2hull}).
So, each element of this hull takes the form $\w=(\w_1,\w_2,\w_3)$, and the family of
equations to work with is
\begin{equation}
\label{eq:5typeskewproduct}
 x'=\mh(\wt,x)+\lb\,, \quad\w\in\W
\end{equation}
for $\mh(\w,x)=\mh((\w_1,\w_2,\w_3),x)):=-(x+\w_1(0))^3+\w_2(0)\,x+\w_3(0)$.
It suffices to take $\w=f$ to recover \eqref{eq:5type}.
Recall that $\W$ is a compact metric space,
and  it is transitive by construction.
In addition, conditions \ref{d1}, \ref{d2}, \ref{d3}, \ref{d4} and \ref{d5}
are very easy to verify. As usual, $\tau_\lb$ and $\mA_\lb$ represent the skewproduct
flow induced by \eqref{eq:5typeskewproduct} on $\W\times\R$ and the global attractor.

According to \cite[Lemma 2.4]{dno4}, $\W$ is the union of the orbit
of $f$ and its \upalfa-limit set and its \upomeg-limit set, which we
denote $\W^\upalpha$ and $\W^\upomega$. Let us begin by describing these sets.
From now on we call $a:=3/2^{2/3}$.
The asymptotic properties of $f_1$, $f_2$ and $f_3$
guarantee that
\[
 \W^\upalpha=\{(\bar 1,\bar 0,-\bar4)\}\qquad\text{and}\qquad
 \quad\W^\upomega=\{(\bar 0,\bar a,\w_3)\,|\;\w_3\in\W^\upomega_{f_3}\}\,,
\]
where $\bar r$ stands for the real function which is identically equal to $r$,
and $\W^\upomega_{f_3}$ is the \upomeg-limit set of $f_3$
on its own hull.
Lemma \ref{lema:5omega} will show that $\W^\upomega_{f_3}$ is given by the union of
the fixed points $\bar 0$ and $\bar 2$, and two heteroclinic orbits
connecting the fixed points in both directions. More precisely, they are the orbits
$\{\bar f_{0\mapsto2}{\cdot}t\,|\;t\in\R\}$ and $\{\bar f_{2\mapsto0}{\cdot}t\,|\;t\in\R\}$
of
\[
\bar f_{0\mapsto2}(t):=\left\{\begin{array}{ll}
0\quad&\text{if }t\le0\,,\\
2\,(1-q(t))&\text{if }t\in(0,1)\,,\\
2&\text{if }t\ge1\,.
\end{array}\right.\quad
\bar f_{2\mapsto0}(t):=\left\{\begin{array}{ll}
2\quad&\text{if }t\le0\,,\\
2\,q(t)&\text{if }t\in(0,1)\,,\\
0&\text{if }t\ge1\,.
\end{array}\right.\quad
\]
Consequently, $\W$ contains exactly three fixed points,
which are $\w^1:=(\bar 1,\bar 0,-\bar 4)$, $\w^2:=(\bar 0,\bar a,\bar 0)$
and $\w^3:=(\bar 0,\bar a,\bar 2)$, and exactly three minimal sets, $\{\w^1\}$,
$\{\w^2\}$ and $\{\w^3\}$. We will extract information about
\eqref{eq:5type} by analyzing the equations of the family \eqref{eq:5typeskewproduct}
corresponding to these three fixed points, which are
\begin{equation}\label{eq:5gi}
x'=g_i(x)+\lb\,, \qquad i=1,2,3
\end{equation}
for $g_i(x)=\mh(\w^i{\cdot}t,x)=\mh(\w^i,x)$, i.e.,
\[
 g_1(x):=-(x+1)^3-4\,,\qquad\; g_2(x):=-x^3+a\,x\,,\qquad\;
 g_3(x):=-x^3+a\,x+2\,.
\]

\begin{notas}\label{rm:5equilpoints}
1.~The three bifurcation diagrams of the autonomous equations \eqref{eq:5gi}
are depicted in Figure \ref{fig:5examplefigure}. For each value of $\lb$, the diagrams
represent the solutions of $-\lb=g_i(x)$, which are the fixed points of each equation
\eqref{eq:5gi} and determine their global dynamics. They are hyperbolic attractive
(resp.~repulsive) if and only if, at them, $(g_i)_x$ is strictly negative
(resp.~positive). It is easy to observe and check that $x'=g_1(x)+\lb$
fits case \ref{p2}, and $x'=g_i(x)+\lb$ fits \ref{p1} for $i=2,3$.

2.~Let $\mb_\lb\colon\W\to\R$ be a $\tau_\lb$-equilibrium. Then, for all $t\in\R$,
$\mb_\lb(\w^i)=\mb_\lb(\w^i{\cdot}t)=v_\lb(t,\w^i,\mb_\lb(\w^i))$.
This means that $\mb_\lb(\w^i)$ is a fixed point of $x'=g_i(x)+\lb$.
Note also that, if $x_\lb(\w^i)$ is a fixed point of this equation, then
$t\mapsto (\w^i, x_\lb(\w^i))$ is a constant (and hence bounded)
$\tau_\lb$-orbit, which means that $(\w^i,x_\lb(\w^i))\in\mA_\lb$:
see Remark \ref{rm:3pre}.1. Altogether, we conclude that
$\ml_\lb(\w^i)$ and $\muk_\lb(\w^i)$ are the minimum and maximum of the
(one, two or three) critical points of $x'=g_i(x)+\lb$. In particular,
the fiber of $\mA_\lb$ over the unique point $\w^1$ of $\W^\upalpha$ is a singleton
for all $\lb\in\R$.

3.~In addition to the previous information, we will prove that
$\mA_\lb(f{\cdot}t)$ is a singleton for all $\lb\ne 4$ and all $t\in\R$. Let us take
$(\w,x)$ in the \upalfa-limit set of $(f,\muk_{\lb}(f))$ and observe that
$\w=\w^1$, so that the previous remark ensures that $x=\muk_{\lb}(\w^1)$.
So, if $\lb\ne 4$, the \upalfa-limit set of $(f,\muk_{\lb}(f))$ is
an attractive hyperbolic copy of $\{\w^1\}$ (see Figure \ref{fig:5examplefigure}).
The same argument applies to the \upalfa-limit set of $(f,\ml_{\lb}(f))$. Assuming that
$\ml_{\lb}(f)<\muk_\lb(f)$ gives a contradiction with the information
provided by \cite[Proposition 2.6(ii)]{dno4}. The equality for
all $t\in\R$ follows immediately.
\end{notas}
 \begin{figure}[ht]
\includegraphics[width=0.95\textwidth]{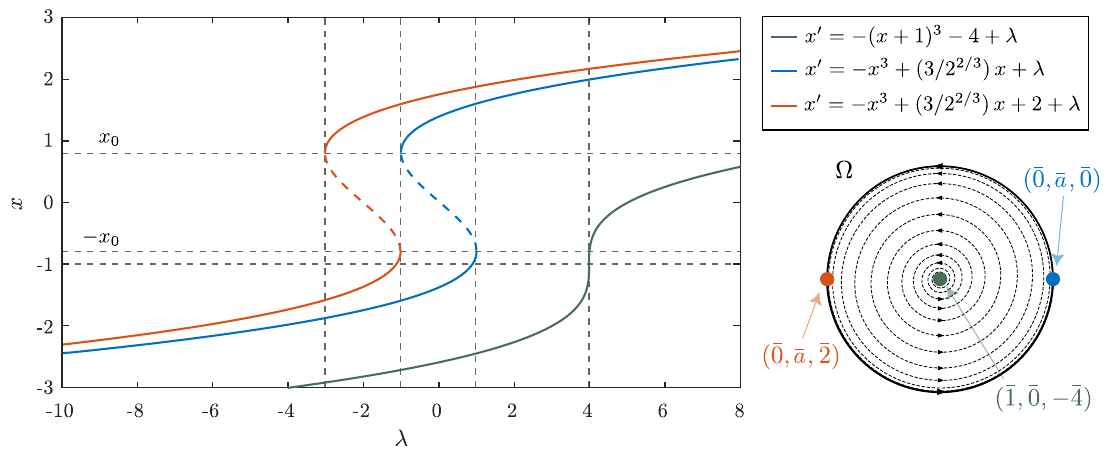}
\caption{On the left panel, the bifurcation diagrams of the three
autonomous equations \eqref{eq:5gi}. The branches of attractive and
repulsive fixed points
are depicted in solid and dashed line, respectively.
At the bifurcation points of $x'=g_2(x)+\lb$ and $x'=g_3(x)+\lb$,
the unique nonhyperbolic fixed points are $\pm x_0$ for $x_0=2^{-1/3}$.
On the right panel, a depiction of the orbits
of the base flow $\W$:
the three fixed points are depicted in colors, and the outer invariant circle
is the \upomeg-limit set of the inner spiraling trajectory,
which represents $\{f{\cdot}t\,|\;t\in\R\}\,$.}
\label{fig:5examplefigure}
\end{figure}
The main goal of this section is to prove the following result.
\begin{teor}\label{teor:5withjump}
The family \eqref{eq:5typeskewproduct} satisfies:
\begin{itemize}
\item[{\rm(i)}] $\lb_l^*=-1<4=\lb_u^*$, and hence \ref{p3} holds.
\item[{\rm(ii)}] There is a jump bifurcation point at $\lb_l^*=-1$.
\item[{\rm(iii)}] The graph $\{\muk_\lb\}$ is a hyperbolic copy of the base for $\lb\in(\lb_l^*,\lb_u^*)
=(-1,4)$.
\end{itemize}
\end{teor}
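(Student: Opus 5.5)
The plan is to reduce everything to the three autonomous equations \eqref{eq:5gi}, to two constant barriers at $\pm x_0$ (with $x_0=2^{-1/3}$) for the equations over $\W^\upomega$, and to the fact that $\merg$ consists only of the three Dirac measures $\delta_{\w^1},\delta_{\w^2},\delta_{\w^3}$ (every invariant measure lives on the minimal sets). A direct computation gives $g_2(x_0)=1$ and $g_2(-x_0)=-1$, since $(g_2)_x(\pm x_0)=0$. Hence $x'=g_2(x)+\lb$ has three fixed points exactly for $\lb\in(-1,1)$, and $x'=g_3(x)+\lb$ exactly for $\lb\in(-3,-1)$. Also $(g_1)_x(x)=-3(x+1)^2$ vanishes only at $x=-1$, where $g_1(-1)=-4$, so the unique fixed point of $x'=g_1(x)+\lb$ is nonhyperbolic only for $\lb=4$. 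The key barrier computation concerns the equations over $\W^\upomega$, which read $x'=-x^3+a\,x+\w_3(t)+\lb$ with $\w_3\in[0,2]$. The constant $x_0$ satisfies $-x_0^3+ax_0+\w_3+\lb=1+\w_3+\lb>0$ whenever $\lb>-1$, so it is a strict global lower solution. Symmetrically, $-x_0$ gives the value $-1+\w_3+\lb<0$ whenever $\lb<-1$, so it is a strict global upper solution. The same holds for the equation of $f{\cdot}t$ when $t\ge1$.

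For (i) and (iii), fix first $\lb>-1$, $\lb\ne4$. On $\W^\upomega$, Proposition \ref{prop:2bocata} with the lower solution $x_0$ and a large constant upper solution gives a compact invariant set lying strictly above $x_0$, of the form of Theorem \ref{th:3copia}(b). On the region $x>x_0$ we have $\mh_x=-3x^2+a<0$, so its upper Lyapunov exponent is negative and it is an attractive hyperbolic copy of $\W^\upomega$; it must be $\{\muk_\lb|_{\W^\upomega}\}$. Next, by Remark \ref{rm:5equilpoints}.3 the fibers of $\mA_\lb$ over the orbit of $f$ are singletons. The solution through $\muk_\lb(f{\cdot}t)$ stays above $x_0$ once it crosses it. Moreover, it must cross during the long intervals on which $f_3\equiv2$, because for $\lb>-1$ the equation $x'=g_3(x)+\lb$ has only its upper fixed point. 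This yields continuity of $\muk_\lb$ at the points of $\W^\upomega$, and continuity at $\w^1$ follows from the hyperbolicity of the fixed point of $g_1+\lb$. Alternatively, one can apply Theorem \ref{th:3copia}(a) to $\mK^u_\lb$. Its Lyapunov exponents for $\delta_{\w^1},\delta_{\w^2},\delta_{\w^3}$ are $(g_i)_x(\muk_\lb(\w^i))<0$, and its fibers over the three minimal sets are single points. Either way, $\{\muk_\lb\}$ is an attractive hyperbolic copy of $\W$ for $\lb\in(-1,4)\cup(4,\infty)$, which is (iii), and $\lb_u^*\le4$. Restricting to $\{\w^1\}$ and using Lemma \ref{lema:infinity}(ii) gives $\lb_u^*\ge\lb_u(\w^1)=4$. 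The symmetric argument with the upper solution $-x_0$ shows that $\{\ml_\lb\}$ is hyperbolic for $\lb<-1$. Restricting to $\{\w^3\}$, where the lower branch of $g_3$ folds at $\lb=-1$, gives $\lb_l^*\le\lb_l(\w^3)=-1$. Hence $\lb_l^*=-1<4=\lb_u^*$ and \ref{p3} holds.

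For (ii), I will take as residual invariant set $\W_0$ the orbit of $f$, which consists exactly of the points with dense orbit, and prove \eqref{eq:4l_lbjumps} at $\lb_0=-1$ with $\rho=2x_0$. On this orbit $\ml_\lb=\muk_\lb$ (Remark \ref{rm:5equilpoints}.3). Fix $\ep>0$. For $\lb=-1+\ep$, the argument above shows that $\ml_\lb(f{\cdot}t)=\muk_\lb(f{\cdot}t)>x_0$ for all large $t$. For $\lb=-1-\ep$, the intervals on which $f_3\equiv0$ matter, since there $x'=g_2(x)+\lb$ has only its lower fixed point, which lies below $-x_0$. These intervals grow without bound, so the solution is eventually pushed below $-x_0$, and afterwards the strict upper solution $-x_0$ keeps it there. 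Thus $\liminf_{t\to\infty}\big(\ml_{-1+\ep}(f{\cdot}t)-\ml_{-1-\ep}(f{\cdot}t)\big)\ge2x_0$ for every $\ep>0$. Since $-1=\lb_l^*$ is a loss-of-hyperbolicity point by Proposition \ref{prop:4loss}, it is a jump bifurcation point.

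The main obstacle is the \emph{pushing} step. The time needed to cross the slow region near the ghost of the fold grows as $\ep\to0$, and I must check that the unbounded length of the constancy intervals of $f_3$ still guarantees the crossing for each fixed $\ep$. I also need the continuity of $\muk_\lb$ at the heteroclinic points of $\W^\upomega$, which must be extracted from this same tracking argument.
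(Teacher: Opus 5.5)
Your proof is correct and has the same skeleton as the paper's. Both use the barriers $\pm x_0$, the reduction to $g_1,g_2,g_3$ at the three Dirac measures, Theorem~\ref{th:3copia}, and the bound $\liminf_{t\to\infty}\muk_\lb(f{\cdot}t)\ge x_0$ for $\lb>-1$ combined with Remark~\ref{rm:5equilpoints}.3. Several steps, however, are carried out differently.

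\emph{Crossing $x_0$.} You obtain it from an explicit speed bound, $g_3+\lb\ge1+\lb$ on $(-\infty,x_0]$. The paper argues by compactness instead: if $f{\cdot}t_n\to\w^3$, any limit of $\muk_\lb(f{\cdot}t_n)$ lies in the fiber of $\mA_\lb$ over $\w^3$, which is the single fixed point of $g_3+\lb$, and this point lies above $x_0$.

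\emph{Your ``main obstacle''.} The analogous bound $g_2+\lb\le1+\lb$ on $[-x_0,\infty)$ removes it. For fixed $\ep$ the crossing time is $O(1/\ep)$, while the constancy intervals of $f_3$ are unbounded, so no uniformity in $\ep$ is needed.

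\emph{Continuity on $\W^\upomega$.} The continuity you still need there is obtained as in the paper's proof of (iii). A limit $(\w,x)$ of $(f{\cdot}t_n,\muk_\lb(f{\cdot}t_n))$ with $t_n\to\infty$ has its whole $\tau_\lb$-orbit in $\W^\upomega\times[x_0,\infty)$. The maximality in Proposition~\ref{prop:2bocata} then forces $x=\muk_\lb(\w)$. Your second route only needs this at $\w^2$ and $\w^3$.

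\emph{The lower equilibrium.} For $\ml_\lb$ with $\lb\le-1$ the paper needs no tracking. The constant $-x_0$ is a global upper solution of the equation of $f$ on all of $\R$; the paper checks $t\le0$, $t\in(0,1)$ and $t\ge1$ separately. Hence $\ml_\lb\le-x_0$ on $\W$. This gives at once the hyperbolicity of $\{\ml_\lb\}$ for $\lb<-1$ and the bound on $\ml_{-1-\ep}$ in (ii). The situation is asymmetric because $x_0$ is not a lower solution for $t\le0$, which is why tracking is unavoidable for $\muk_\lb$.

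\emph{The case $\lb>4$.} The paper simply applies Theorem~\ref{th:3copia} to $\mA_\lb$, whose fibers over the minimal sets are singletons.

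\emph{Where your version is the accurate one.} First, the zero exponent at $\lb=-1$ comes from $\w^3$: $\ml_{-1}(\w^3)=-x_0$ and $(g_3)_x(-x_0)=0$. The paper's text writes $\w^1$ there, but $\ml_{-1}(\w^1)=-1-5^{1/3}$. Second, the residual set in Definition~\ref{def:4jump} should be the orbit of $f$, as you take it. The paper's extension of the estimate to all of $\W^\upomega$ fails at $\w^3$, where both $\muk_{-1+\ep}(\w^3)$ and $\muk_{-1-\ep}(\w^3)$ tend to $2x_0$ as $\ep\to0$.

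\emph{Ergodic measures.} ``Every invariant measure lives on the minimal sets'' is not a general principle. Here it holds because $\w^1,\w^2,\w^3$ are the only recurrent points of $\W$, by Poincar\'e recurrence together with Lemma~\ref{lema:5omega}. The paper verifies this through Birkhoff-generic points.
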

\begin{proof}
(i) It can be proved that any $\sigma$-ergodic measure on $\W$
is supported either in $\W^\upalpha$ or in $\W^\upomega$:
see e.g. the proof of \cite[Lemma 4.3]{dno4}. Clearly the unique one
concentrated in $\W^\upalpha$ is the atomic measure
$m_1$ concentrated on $\{\w^1\}$.
Now, let $\tilde m$ be any ergodic measure on $\W^\upomega$.
As explained in, e.g., \cite[Remark 1.10]{jonnf}, the
separability of $C(\W^\upomega,\R)$ ensures that $\tilde m$ is concentrated on
\[
 \mS(\tilde m):=\left\{\w\in\W^\upomega\,\bigg|\;\lim_{T\to\infty}\frac{1}{T}\int_0^T f(\wt)\,dt=
 \int_{\W^\upomega} f\,d\tilde m\;\text{ for all }f\in C(\W^\upomega,\R)\right\}\,.
\]
We take $\w\in\mS(\tilde m)$ and deduce from Lemma~\ref{lema:5omega} that $\lim_{t\to\infty}\wt=\w^i$
for $i=2$ or $i=3$. It follows easily that $\lim_{T\to\infty}\frac{1}{T}\int_0^T f(\wt)\,dt=
f(\w^i)$ and hence $\int_{\W^\upomega} f\,d\tilde m=f(\w^i)$ for all $f\in C(\W^\upomega,\R)$,
which means that $\tilde m$ is the atomic measure concentrated on $\{\w^i\}$.
So, there are two ergodic measures on $\W^\upomega$:
$m_2$ and $m_3$, concentrated on $\{\w^2\}$ and $\{\w^3\}$.
In particular, if $\mb_\lb\colon\W\to\R$ is an $m_i$-measurable $\tau_\lb$-equilibrium, then
$\int_{\W}\mh_x(\w,\mb_\lb(\w))\,dm_i=\mh_x(\w^i,\mb_\lb(\w^i))=g_i(\mb_\lb(\w^i))$
for $i=1,2,3$ (see Remark \ref{rm:5equilpoints}.2).

We begin by proving that $\lb_u^*=4$. Let us take $\lb>4$ and any $\tau_\lb$-equilibrium
$\mb_\lb$. As explained in, e.g., \cite[Section 2.1]{dno5}, the upper Lyapunov of
$\mA_\lb$ is given by $\int_\W\mh_x(\w,\mb_\lb(\w))\,dm$ for a $\sigma$-ergodic measure and
an equilibrium $\mb_\lb$. Since $\lb>4$, each one of the three equations \eqref{eq:5gi}
has a unique fixed point, which is hyperbolic attractive
(see Figure \ref{fig:5examplefigure}). Remarks \ref{rm:5equilpoints} explain
that the fixed point is $\mb_\lb(\w^i)=\ml_\lb(\w^i)=\muk_\lb(\w^i)$ for $i=1,2,3$,
and that $(g_i)_x(\mb_\lb(\w^i))<0$. This means that all the Lyapunov exponents of
$\mA_\lb$ are negative. Since the unique minimal sets of are $\{\w^i\}$ for $i=1,2,3$,
the additional condition in Theorem \ref{th:3copia} is also fulfilled, and hence
$\mA_\lb$ is an attractive hyperbolic copy of the base:
$\mA_\lb=\{\ml_\lb\}=\{\muk_\lb\}$. Therefore, $\lb_u^*\le 4$.
We can repeat the argument with $\lb=4$, with a unique important big difference:
now, $(g_1)_x(\mb_4(\w^1))=0$. Since, again,
$\mb_4(\w^1)=\muk_4(\w^1)$, we have $\int_\W\mh_x(\w,\muk_4(\w))\,dm^1=0$,
and Theorem \ref{th:3copia} shows that $\{\muk_{4}\}$ is not
an attractive hyperbolic copy of the base. That is, $\lb_u^*=4$, as asserted.

Now, we will prove that $\lb_l^*=-1$. Let us fix $\lb\le-1$. We begin by
checking that the constant map $-\bar x_0$ with $x_0:=1/2^{1/3}$ is a global upper
solution. That is,
$\mh(\w,-x_0)+\lb\le 0$ for all $\w\in\W$, which due to the continuity of $\mh$
holds if $\mh(f{\cdot}t,-x_0)+\lb\le 0$ for all $t\in\R$; i.e., if
$h(t,-x_0)+\lb=-(-x_0+f_1(t))^3-f_2(t)\,x_0+f_3(t)+\lb\le 0$
for all $t\in\R$. This is easy to check:
\begin{itemize}[leftmargin=25pt]
\item[-] if $t\le0$, then $f_1(t)=1$, $f_2(t)=0$ and $f_3(t)=-4$,
so $h(t,-x_0)+\lb\le-(2^{1/3}-1)^3/2-4-1<0$\,;
\item[-] if $t\in(0,1)$, then $f_1(t)\ge 0$,
$f_2(t)\ge 0$ and $f_3(t)\le 0$, so $h(t,-x_0)+\lambda\le x_0^3+\lb\le 1/2-1<0$\,;
\item[-] if $t\ge 1$, then $f_1(t)=0$, $f_2(t)=a$ and $f_3(t)\le 2$,
so $h(t,-x_0)+\lambda=x_0^3-a\,x_0+f_3(t)+\lb\le 1/2-3/2+2-1=0$\,.
\end{itemize}
According to \cite[Theorem 5.1]{dno1} and \cite[Theorem 2.13(v)]{duen},
this means that $\ml_\lb\le -\bar x_0$. Hence, the
closure $\mK_\lb^l\subset\W\times(-\infty,-x_0]$ of the graph of $\ml_\lb$
is a compact $\tau_\lb$-invariant subset.
Reasoning as above, we check that $\mK_\lb^l$ has at most
three possible different Lyapunov exponents, all
of them strictly negative. Hence, Theorem \ref{th:3copia} shows that
$\mK_\lb^l$ is in fact an attractive hyperbolic copy of the base,
which contains and hence coincides with $\{\ml_\lb\}$.
Thus, $\lb_l^*\ge-1$. In this case, $\ml_{-1}(\w^1)=-x_0$ and
$(g_1)_x(-x_0)=0$ (see Remarks \ref{rm:5equilpoints} and Figure \ref{fig:5examplefigure}).
That is, $\int_\W\mh_x(\w,\ml_{-1}(\w))\,dm^1=0$,
which combined again with Theorem \ref{th:3copia} precludes $\{\ml_{-1}\}$ from
being an attractive hyperbolic copy of the base. The conclusion is that $\lb_l^*=-1$,
and it completes the proof of (i).
\smallskip\par

(ii) Let us prove that $\lb_l^*=-1$ is a jump bifurcation point.
We fix $\ep>0$, and take $(t_n)\uparrow\infty$ and
$\lim_{n\to\infty}f{\cdot}t_n=\w^3$. By taking an adequate subsequence if required,
we assume the existence of $x_{-1+\ep}:=\lim_{n\to\infty}\muk_{-1+\ep}(f{\cdot}t_n)$.
Then, the limit $(\w^3,x_{-1+\ep})$ belongs to $\mA_{-1+\ep}$. The fiber of this attractor
over $\w^3$ reduces to the unique
critical point of $x'=g_3(x)-1+\ep$, which means that $x_{-1+\ep}>x_0$:
see Remarks \ref{rm:5equilpoints} and Figure \ref{fig:5examplefigure}. We take
$t_{n_\ep}$ with $\muk_{-1+\ep}(f{\cdot}(t_{n_\ep}))>x_0$.
On the other hand, $\mh(f{\cdot}t,x_0)-1\ge 0$ for all $t\ge 1$: we must check that
$h(t,x_0)-1=-(x_0+f_1(t))^3+f_2(t)\,x_0+f_3(t)-1\ge 0$
for all $t\ge 1$, and this follows from $f_1(t)=0$, $f_2(t)=a$
and $f_3(t)\ge 0$ if $t\ge 1$, since these equalities yield $h(t,x_0)-1=-x_0^3+a\,x_0+f_3(t)-1
\ge -1/2+3/2-1=0$ for $t\ge 1$. So, $\mh(f{\cdot}t,x_0)-1+\ep\ge\ep$ for all $t\ge 1$.
Since $t\mapsto\muk_{-1+\ep}(f{\cdot}t)$ solves the equation $x'=\mh(f{\cdot}t,x)-1+\ep$,
we conclude that $\muk_{-1+\ep}(f{\cdot}t)\ge x_0$ for all $t\ge t_{n_\ep}$, and hence that
$\liminf_{t\to\infty}\muk_{-1+\ep}(f{\cdot}t)\ge x_0$. On the other hand,
we have checked in the proof of (i) that $\sup_{t\in\R}\ml_{-1-\ep}(f{\cdot}t)\le -x_0$.
Therefore, using the information of Remark \ref{rm:5equilpoints}.3,
we conclude that
\[
 \liminf_{t\to\infty}\big(\muk_{-1+\ep}(f{\cdot}t)-\muk_{-1-\ep}(f{\cdot}t)\big)
 =\liminf_{t\to\infty}\big(\muk_{-1+\ep}(f{\cdot}t)-\ml_{-1-\ep}(f{\cdot}t)\big)
 \ge 2x_0\,,
\]
It is easy to check that this implies
$\liminf_{t\to\infty}\big(\muk_{-1+\ep}((f{\cdot}s){\cdot}t)-
\muk_{-1-\ep}((f{\cdot}s){\cdot}t)\big)\ge 2x_0$ for all $s\in\R$, and to deduce
from the semicontinuity properties of $\muk_\lb$ and $\ml_\lb$ that
$\liminf_{t\to\infty}\big(\muk_{-1+\ep}(\wt)-\muk_{-1-\ep}(\wt)\big)\ge 2x_0$ for all
$\w\in\W^\upomega$. So, no matter what happens with $\w=\w^1$ (the
unique element of $\W^\upalpha$), Definition \ref{def:4jump} shows
the occurrence of a jump bifurcation at $\lb_l^*=-1$.
\smallskip\par

(iii) Let us take $\lb\in(\lb_l(\bar\w),\lb_u(\bar\w))=(-1,4)$, and
check that $\muk_\lb$ is a hyperbolic copy of the base.
We define $\mK_\lb^u:=\mathrm{cls}\{(f{\cdot}t,\muk_\lb(f{\cdot}t))\,|\; t\in\R\}$.
Let us check that $(\mK_\lb^u)_{\w^i}=\{\muk_\lb(\w^i)\}$ for $i\in\{1,2,3\}$.
Since $\mK_\lb^u$ is a $\tau_\lb$-invariant compact set,
$(\mK_\lb^u)_{\w^i}\subseteq[\ml_\lb(\w^i),\muk_\lb(\w^i)]$.
So, $\mK(\w^1)=\{\muk_\lb(\w^1)\}$, since $\ml_\lb(\w^1)=\muk_\lb(\w^1)$
(see Remark \ref{rm:5equilpoints}).
Now, for $i=2$ or $i=3$, we take $(\w^i,x^i)\in\mK_\lb^u$ and
assume without restriction that
$(\w^i,x^i)=\lim_{n\to\infty}(f{\cdot}t_n,\muk_\lb(f{\cdot}t_n))$
for $(t_n)\uparrow\infty$. So,
for all $t\in\R$, $v_\lb(t,\w^i,x^0)=\lim_{n\to\infty}\muk_\lb(f{\cdot}(t+t_n))\ge
\liminf_{t\to\infty}\muk_\lb(f{\cdot}t)\ge x_0$: the last inequality
has been checked in the proof of (ii).
Hence, $t\mapsto v_\lb(t,\w^i,x^i)$ solves $x'=g_3(x)+\lb$ and is bounded and above $x_0$.
The bifurcation diagram of this equation (see Figure \ref{fig:5examplefigure} and
Theorem \ref{th:3Dbifur}) ensures that $v(t,\w^i,x^i)$ is constantly equal to
$\muk_\lb(\w^i)$, which is the upper equilibrium point of the equation. In
particular, $x^i=\muk_\lb(\w^i)$, as asserted.

Now we reason as in the second paragraph of the proof of (i) for
$\lb\in(-1,4)$ instead of $\lb>4$ to conclude that all the Lyapunov exponents
of $\mK_\lb^u$ are strictly negative and to deduce that $\mK_\lb^u$ is an
attractive hyperbolic copy of the base. Since $\mK_\lb^u$ contains
$\{\muk_\lb\}$, both sets coincide, which completes the proof.
\end{proof}
Note that, as said after Theorem \ref{teor:4jumpbifurcationp2p3}, the sufficient
condition \eqref{eq:4equivp2} does not hold in this case of
jump bifurcation, since Remark \ref{rm:5equilpoints}.3 shows that
$\muk_{-1}(f{\cdot}t)=\ml_{-1}(f{\cdot}t)$ for all $t\in\R$.

To complete the previous proof requires to prove the next lema.
\begin{lema}\label{lema:5omega}
$\W^\upomega_{f_3}=
\{\bar 0\}\cup\{\bar 2\}\cup\{\bar f_{0\mapsto2}{\cdot}t\,|\;t\in\R\}\cup
\{\bar f_{2\mapsto0}{\cdot}t\,|\;t\in\R\}$.
\end{lema}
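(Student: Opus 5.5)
We need to prove that the $\omega$-limit set of $f_3$ in its hull (compact-open topology on $C(\R,\R)$, i.e. uniform convergence on compacts) is exactly the stated set. The plan is to prove the two inclusions separately, using the explicit structure of $f_3$. For $t\ge0$, $f_3$ consists of plateaus at levels $0$ and $2$ whose lengths $L_k\to\infty$. Consecutive plateaus are joined by transition pieces of length exactly $1$ of the form $2q(t-t_k)$ or $2(1-q(t-t_k))$. The one exception is the initial transition from $-4$, which is irrelevant as $t\to\infty$.

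For the inclusion ``$\supseteq$'', it suffices to show that $\bar f_{0\mapsto2}$, $\bar f_{2\mapsto0}$, $\bar 0$ and $\bar 2$ lie in $\W^\upomega_{f_3}$, since the $\omega$-limit set is closed and invariant. Let $t_k\to\infty$ be the starting times of the transitions from $0$ to $2$. Because the plateaus on both sides have lengths tending to infinity, $f_3{\cdot}t_k$ coincides on $[-L,L]$ with the corresponding heteroclinic profile for every large $k$. Hence $f_3{\cdot}t_k\to\bar f_{0\mapsto 2}$ uniformly on compacts. The same argument handles the other heteroclinic. Taking instead the midpoints of the plateaus, whose distance to the plateau ends tends to infinity, gives $\bar0$ and $\bar2$.

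For the inclusion ``$\subseteq$'', take $g=\lim_n f_3{\cdot}s_n$ with $s_n\to\infty$. Let $d_n$ be the signed distance from $s_n$ to the nearest transition start. Passing to a subsequence, either $|d_n|\to\infty$ or $d_n\to d\in\R$. In the first case, for each fixed $L$ the window $[s_n-L,s_n+L]$ eventually meets at most one transition. If it meets none, the window lies in a single plateau, so $g$ is constant equal to $0$ or $2$ there. A diagonal argument, refining the subsequence so that the plateau type is eventually fixed, then gives $g\in\{\bar0,\bar2\}$. If instead the window meets exactly one transition, we are in the second case. There, $d_n$ stays bounded, and after refining so that the transition type is fixed, the growth of the plateau lengths gives $f_3{\cdot}s_n\to \bar f_{0\mapsto2}{\cdot}(-d)$ or $\bar f_{2\mapsto0}{\cdot}(-d)$ uniformly on compacts.

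The main obstacle is writing the case distinction cleanly. It rests on a single quantitative fact: if a window of length $2L$ contains parts of two different transitions, then it contains a whole plateau between them, which is impossible once that plateau is longer than $2L$. This fact is what makes both inclusions go through.
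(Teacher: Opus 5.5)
Your proof is correct and is essentially the paper's argument. The paper also splits a sequence $t_n\to\infty$ according to whether its distance $l_n$ to the transition endpoints $a_k,b_k,c_k,d_k$ stays bounded, giving a limit on one of the two heteroclinic orbits because the plateau lengths $k\to\infty$, or is unbounded, giving $\bar 0$ or $\bar 2$. You additionally spell out the inclusion $\supseteq$, which the paper leaves implicit; the only blemish is that when $|d_n|\to\infty$ your subcase where the window meets exactly one transition cannot occur (transitions have length $1$), so appealing to the second case there is unnecessary.
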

\begin{proof}
For each $k\in\N$, $k\ge 1$, we call
$a_k:=k^2+k-1$, $b_k:=k^2+2k-1=a_k+k$, $c_k:=k^2+2k=b_k+1$ and $d_k:=k^2+3k=c_k+k=a_{k+1}-1$.
We take $\w_3\in\W^\upomega_{f_3}$ and $(t_n)\uparrow\infty$ such that
$\lim_{n\to\infty}f_3{\cdot}t_n=\w_3$ in $\W_{f_3}$; that is,
$\lim_{n\to\infty}(f_3{\cdot}t_n)(t)=\lim_{n\to\infty}f_3(t+t_n)=\w_3(t)$
uniformly on the compact subsets of $\R$.
For each $n\in\N$, let $k_n\in\N$ be such that $t_n\in[a_{k_n},a_{k_n+1})$,
so that $\lim_{n\to\infty}k_n=\infty$, and let
\[
 l_n:=\min\{t_n-a_{k_n},\,|t_n-b_{k_n}|,\,|t_n-c_{k_n}|,\,|t_n-d_{k_n}|\}\,.
\]
If $(l_n)$ is bounded, then there exist $m>0$ and, for every $n\ge 1$, a point
$p_n\in\{a_{k_n},b_{k_n},c_{k_n},d_{k_n}\}$ such that
$|t_n-p_n|\le m$. Clearly, by changing to a suitable subsequence if needed,
there is no loss of generality in assuming from the beginning
that either $p_n=a_{k_n}$ for all $n\ge 1$, or $p_n=b_{k_n}$ for all $n\ge 1$,
or $p_n=c_{k_n}$ for all $n\ge 1$, or $p_n=d_{k_n}$ for all $n\ge 1$,
and that there exists $\lim_{n\to\infty}(t_n-p_n)=:s\in[-m,m]$. Let us check that
$\lim_{n\to\infty}f_3{\cdot}t_n=(\bar f_{2\mapsto0}){\cdot}s$ in $\W_{f_3}$
in the cases $p_n=a_{k_n}$ and $p_n=d_{k_n}$. We only detail the
proof for $p_n=d_{k_n}$ for all $n\in\N$, as the other case is analogous.
We have $\lim_{n\to\infty}(t_n-d_{k_n})=s\in[-m,m]$.
We take any compact set $[-r,r]\subset\R$ with $r>m+1$, and look for
$n$ large enough to ensure that $-k_n<t_n-d_{k_n}-r<0<1<t_n-d_{k_n}+r$,
which is possible since $-\infty<s-r<0<1<s+r$. Then, for these values of $n$,
\[
 c_{k_n}=d_{k_n}-k_n<t_n-r<d_{k_n}<d_{k_n}+1<t_n+r\,.
\]
In particular, $(d_{k_n},d_{k_n}+1)\subset [t_n-r,t_n+r]$ and $c_{k_n}<t_n-r$.
Therefore, $(f_3{\cdot}t_n)(t)=f_3(t+t_n)=0$ for $t\in[-r,d_{k_n}-t_n]$,
$(f_3{\cdot}t_n)(t)=2\,(1-q(t+t_n-d_{k_n}))$ for $t\in(d_{k_n}-t_n,d_{k_n}+1-t_n)$,
and $(f_3{\cdot}t_n)(t)=2$ for $t\in[d_{k_n}+1-t_n,r]$.
The claim follows easily from here and the expression of $(\bar f_{2\mapsto0}){\cdot}s$.
Analogously, we can prove that $\lim_{n\to\infty}(f_3{\cdot}t_n)=
(\bar f_{0\mapsto2}){\cdot}s$ in $\W_{f_3}$ in the cases $p=b_{k_n}$ and $p=c_{k_n}$.

If $(l_n)$ is not bounded, then it has a subsequence with limit $\infty$. Since
$c_k-b_k=a_{k+1}-d_k=1$, we can choose such a subsequence (and rename it as $(l_n)$)
with the additional property that either $t_n\in[a_{k_n},b_{k_n})$ for all $n$ or that
$t_n\in[c_{k_n},d_{k_n})$ for all $n$. Assume that $t_n\in[a_{k_n},b_{k_n})$ for all $n$, so that
$\lim_{n\to\infty}(t_n-a_{k_n})=\lim_{n\to\infty}(b_{k_n}-t_n)\ge\lim_{n\to\infty}l_n=\infty$
take $r>0$; look for $n_0$ such that $[t_n-t,t_n+r]\subset[a_{k_n},b_{k_n}]$ for all $n\ge n_0$,
and deduce that $(f_3{\cdot}t_n)(t)=0$ for all $t\in[-r,r]$, which means that $\w_3=\bar 0$.
Similarly, $\w_3=\bar 2$ if $t_n\in[c_{k_n},d_{k_n})$ for all $n$.

Altogether, $\W^\upomega_{f_3}=\{\bar 0\}\cup\{\bar 2\}\cup\{\bar f_{0\mapsto2}{\cdot}t\,|\;t\in\R\}\cup
\{\bar f_{2\mapsto0}{\cdot}t\,|\;t\in\R\}\}$,
as asserted.
\end{proof}
\subsection{An example of critical transition in case \ref{p3}}\label{subsec:5critical}
The relevant information about the bifurcation diagram of \eqref{eq:5typeskewproduct}
given so far allows us to present an example of a critical transition, again
related to the occurrence of a jump bifurcation point.
Cubic equations like
\eqref{eq:5type} have been widely used as models for populations subject to the
so-called Allee effect (see, e.g., \cite{courchamp08}, \cite{fmrh}), a
biological phenomenon that describes a positive correlation between a population’s
growth rate and its size. This effect helps prevent the extinction of sufficiently
large populations, thanks to cooperative mechanisms such as mating, defense, and
foraging or hunting in groups. In the example that we present, we can interpret
the sudden increase in the steady state of a population (that is intended to be kept under
strict control) upon a slight modification of hunting, as a critical transition.

Let us choose $f_1,f_2$ and $f_3$ as in Figure \ref{fig:5Lambdas} and $q$ as a $C^1$ continuation
of the map there considered taking the value $0$ for $t\le0$ and the value $1$ for $t\ge1$.
We model a size-induced tipping phenomena (see, e.g., \cite{dno4}) through a simple
parameter shift function of the form
\[
 \Lambda_d(t):=d\,q(t)\,,
\]
which takes the value $0$ for $t\le0$ and the value $d\ge 0$ for $t\ge1$.

Replacing the parameter $\lb$ with $\Lambda_d(t)$ in \eqref{eq:5type}
gives the $d$-parametric family
\begin{equation}\label{eq:5criticaltransition}
 x'=-\big(x+f_1(t)\big)^3+f_2(t)\,x+f_3(t)-\Lambda_d(t)
\end{equation}
for $d\ge 0$. We denote $h_d(t,x):=-\big(x+f_1(t)\big)^3+f_2(t)\,x+f_3(t)-\Lambda_d(t)$.

Note that, for all $d\ge0$, \eqref{eq:5criticaltransition} coincides with $x'=-(x+1)^3-4$
for all $t\leq0$, and hence the dissipative equation \eqref{eq:5criticaltransition} has a unique
globally bounded solution, $u_d(t)$ which takes the value $x_*$ for all $t\le 0$,
where $x_*:=-\sqrt[3]{4}-1$ is the unique fixed point of the autonomous equation
(see again Figure \ref{fig:5examplefigure}).
\begin{figure}[ht]
\includegraphics[width=0.7\textwidth]{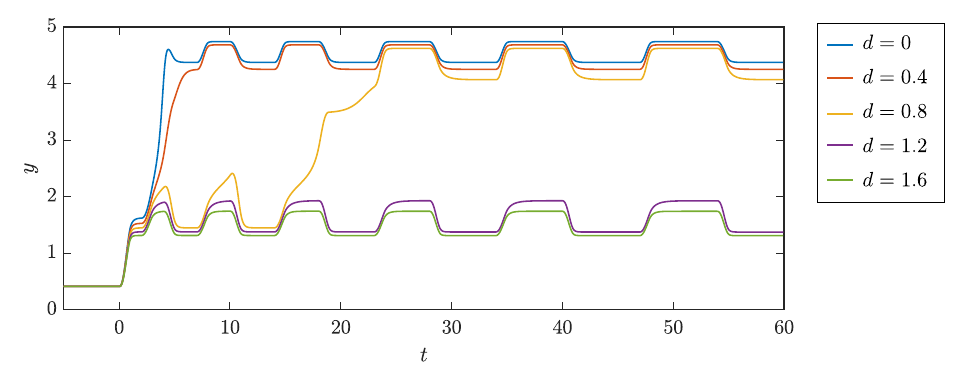}
\caption{A numerical approximation of the unique bounded solution $u_d$
of \eqref{eq:criticaltransition} is shown for several values of the parameter $d$,
written in the right panel.
The solutions are plotted after the change of variables $y=x+3$, which
ensures that $y$ remains positive, so that they can have biological
significance in population dynamics.
As $d$ crosses the threshold value $d=1$, the system undergoes a critical transition:
for $d\in[0,1)$ the solutions eventually remain above $3+2^{-1/3}$,
whereas for $d\ge1$ they eventually fall under $3-2^{-1/3}$.}
\label{fig:5criticaltran}
\end{figure}

Since $u_1(1)\approx-1.6967\le -1/2^{1/3}:=-x_0$ (as easy to check numerically),
and since $h_1(t,-x_0)\le 0$ for all $t\ge 1$ (see the proof of
Theorem \ref{teor:5withjump}(i)), we conclude that $u_1(t)\leq-x_0$ for all $t\ge1$.
It follows from the decreasing character of
$d\mapsto h_d$ that $u_{d_1}\ge u_{d_2}$ whenever $0\le d_1\le d_2$,
and hence $u_d(t)\leq-x_0$ for all $t\ge 1$ if $d\ge 1$.
On the contrary, when $d\in[0,1)$, $u_d(t)\geq x_0$ for all $t\ge t_d>0$,
as we can check by reasoning as in the proof of Theorem \ref{teor:5withjump}(ii).

So, if $d\in[0,1)$, the unique bounded solution
eventually gets trapped above $x_0$, while, if $d\ge1$, the unique bounded solution
is eventually trapped below $-x_0$.
The jump bifurcation present in the underlying bifurcation diagram \eqref{eq:5type}
gives rise to this abrupt change of behavior as the parameter $d$ crosses the
threshold $d=1$.
Figure \ref{fig:5criticaltran} represents the dynamical behavior
of these solutions, which may mean a critical transition in population dynamics.
\section{A non-trivial jump bifurcation in case \ref{p2} over a minimal base}\label{sec:6}
In this section, we will construct a simple example of a coercive nonautonomous cubic
polynomial (and hence $d$-concave) ODE which is recurrent in time,
so that its hull supports a minimal flow, and which defines a skewproduct flow
admitting two separated nonhyperbolic minimal sets. Analyzing the
bifurcation diagram of the family obtained by adding $+\lb$ to the right-hand side,
we will check that this family falls into case \ref{p2}. Therefore,
it exhibits a jump bifurcation at $\lb=0$, in accordance with Theorem
\ref{teor:4jumpbifurcationp2p3}(i). We also include several simulations illustrating
the system’s behavior for a specific base flow provided by the Harper map, as well
as some possible critical jump-type transitions that may occur.

Let $(\W,\sigma)$ be a minimal flow which has exactly two ergodic
measures $m_-$ and $m_+$. The projective flows defined by the pioneer
examples of Million\v{s}\v{c}ikov \cite{milon} and Vinograd \cite{vinograd} of
nonuniformly hyperbolic two-dimensional lineal Hamiltonian systems
contain minimal sets with this property, and we will describe a
flow with the same property in more detail in Section~\ref{subsec:harper}.

Let $\ma_1\colon\W\to\mathbb{R}$ be a continuous map with
$\int_\W\ma_1(\w)\, dm_-<0<\int_\W\ma_1(\w)\,dm_+$. The general existence of such a map follows
from, e.g., Lemma 4.10 of \cite{dno2}, and a particular example will be built
in Section \ref{subsec:harper}. We choose $\ma_0>0$ such that
\begin{equation}\label{eq:checkcondition}
\ma_0+\int_\W\ma_1(\w)\, dm_-<0<\ma_0+\int_\W\ma_1(\w)\,dm_+\,,
\end{equation}
and consider the family of cubic polynomial ODEs
\begin{equation}\label{eq:6cubic}
x'=-x\,(x-1)(\ma_0\,x+\ma_1(\wt))\,,\quad\w\in\W\,.
\end{equation}
We call $\mh(\w,x):=-x\,(x-1)(\ma_0\,x+\ma_1(\w))$, observe that it satisfies
\ref{d1}, \ref{d2}, \ref{d3}, \ref{d4} and \ref{d5}, and denote by $\tau_0$
the skewproduct flow induced by \eqref{eq:6cubic} on $\WR$,
with $\tau_0(t,\w,x)=(\wt,v_0(t,\w,x))$. It is clear that the constant maps
$\bar 0,\,\bar 1\colon\W\to\R,\,\w\mapsto 0,1$ define two $\tau_0$-copies of $\W$, $\{\bar 0\}$
and $\{\bar 1\}$. Since
$\mh_x(\w,0)=\ma_1(\w)$ and $\mh_x(\w,1)=-\ma_0-\ma_1(\w)$, the choices of $\ma_1$ and $\ma_0$ yield
\begin{equation}\label{eq:ergodic_measures_of_0_and_1}
\begin{split}
 \int_\W \mh_x(\w,0)\,dm_-&<0<\int_\W\mh_x(\w,0)\,dm_+\,,\\
 \int_\W\mh_x(\w,1)\,dm_+&<0<\int_\W\mh_x(\w,1)\,dm_-\,.
\end{split}
\end{equation}
So, each one of the copies of $\W$ has exactly two Lyapunov exponents
of different signs: see, e.g., \cite[Section 2.1]{dno5}.
Hence, Theorems \ref{th:3copia}, \ref{th:3casosS} and \ref{th:3Dbifur}
show that
\begin{prop}\label{prop:5dosminimales}
$\{\bar 0\}$ and $\{\bar 1\}$ are two nonhyperbolic $\tau_0$-minimal sets,
and there is no other compact $\tau_0$-invariant set separated from both of them.
\end{prop}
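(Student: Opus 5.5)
Since $\W$ is minimal, the sets $\{\bar 0\}$ and $\{\bar 1\}$, being graphs of continuous $\tau_0$-equilibria over a minimal base, are automatically $\tau_0$-minimal. To see that they are nonhyperbolic, we apply Theorem \ref{th:3copia}; its hypothesis (a) holds trivially, because each set is a copy of $\W$. By \eqref{eq:ergodic_measures_of_0_and_1}, the Lyapunov exponent of $\{\bar 0\}$ is $\int_\W\mh_x(\w,0)\,dm_\pm$. This is negative for $m_-$ and positive for $m_+$. Consequently the upper Lyapunov exponent is strictly positive and the lower one strictly negative. Theorem \ref{th:3copia} then rules out both attractive and repulsive hyperbolicity. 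The argument for $\{\bar 1\}$ is identical with the roles of $m_-$ and $m_+$ exchanged. Here we use that the Lyapunov exponents of a copy of the base are exactly the integrals of $\mh_x$ along it with respect to the ergodic measures $m_\pm$ (see \cite[Section 2.1]{dno5}).

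For the second assertion, we argue by contradiction. Suppose $\mK$ is a compact $\tau_0$-invariant set at positive distance from both $\{\bar 0\}$ and $\{\bar 1\}$. Since $\W$ is minimal, $\mK$ projects onto $\W$. Working fiberwise, $\mK$ lies entirely below $\{\bar 0\}$, entirely between the two, or entirely above $\{\bar 1\}$, up to splitting $\mK$ into ordered pieces; the fibers over different points are coherently ordered because the two copies are disjoint continuous graphs and $\W$ is connected. In every case we obtain three disjoint, strictly ordered compact $\tau_0$-invariant sets projecting onto $\W$: the sets $\{\bar 0\}$, $\{\bar 1\}$, and (a minimal subset of) $\mK$. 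Remark \ref{rm:3pre}.3, which relies on \ref{d1}--\ref{d4}, then forces all three to be hyperbolic copies of $\W$. This contradicts the first assertion, since at least one of $\{\bar 0\}$ and $\{\bar 1\}$ appears among them.

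Alternatively, one could obtain the same contradiction by passing through Theorem \ref{th:3casosS}(c) and Theorem \ref{th:3Dbifur}(i). The main care needed is in the ordering step: the compact invariant set must be disjoint from the two copies and must be trimmed into a piece that is strictly ordered with respect to them, so that Remark \ref{rm:3pre}.3 genuinely applies. To handle this, I would take a minimal subset of $\mK$; it projects onto $\W$ and its fibers avoid $0$ and $1$. Connectedness of $\W$ together with semicontinuity of its upper and lower equilibria should then place it in a single one of the three regions.
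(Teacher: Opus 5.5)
Your proof is correct and follows the paper's route. Nonhyperbolicity comes from Theorem \ref{th:3copia} via the opposite signs of $\int_\W\mh_x(\w,0)\,dm_\pm$ and of $\int_\W\mh_x(\w,1)\,dm_\pm$. The second claim comes from the ``at most three ordered sets, and if three then all are hyperbolic copies'' property of Remark \ref{rm:3pre}.3, which is what underlies the paper's appeal to Theorems \ref{th:3casosS} and \ref{th:3Dbifur}.

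The only imprecision is your justification of the ordering step, which needs neither connectedness of $\W$ nor semicontinuity. The regions $\W\times(-\infty,0)$, $\W\times(0,1)$ and $\W\times(1,\infty)$ are $\tau_0$-invariant, so a compact invariant set avoiding both graphs splits into relatively clopen, hence compact, invariant pieces, each projecting onto $\W$ by minimality.
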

Since $\mh(\w,x)$ satisfies \ref{d1} and \ref{d2}, $\tau_0$ has a global attractor,
$\mA_0=\bigcup_{\w\in\W}\{\w\}\times[\ml_0(\w),\muk_0(\w)]$, where $\ml_0,\muk_0\colon\W\to\R$ are
lower and upper semicontinuous equilibria, respectively: see Remark \ref{rm:3pre}.1. In addition,
\begin{prop}\label{prop:5solotres}
The unique $m_+$-almost always strictly ordered $m_+$-measurable $\tau_0$-equilibria are $\ml_0$,
$\bar 0$ and $\bar 1$, with $\int_\W\mh_x(\w,\ml_0(\w))\,dm_+<0$; and
the unique $m_-$-almost always strictly ordered $m_-$-measurable equilibria are
$\bar 0$, $\bar 1$ and $\muk_0$, with\linebreak $\int_\W\mh_x(\w,\muk_0(\w))\,dm_-<0$.
In particular, $\muk_0=1$ $m_+$-a.a., $\muk_0>1$ $m_-$-a.a., $\ml_0=0$ $m_-$-a.a., and
$\ml_0<0$ $m_-$-a.a.
\end{prop}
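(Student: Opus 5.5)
The plan is to work measure by measure, fixing first $m_+$ (the case of $m_-$ is symmetric, exchanging the roles of $\bar 0$ and $\bar 1$ and of $\ml_0$ and $\muk_0$). I read the last assertion of the statement as $\ml_0<0$ $m_+$-a.a., since $\ml_0=0$ $m_-$-a.a. is asserted just before it. I will use three tools from the d-concave in measure theory of \cite{dno4,dno1} that underlie Remark \ref{rm:3pre}.3, in their measure versions.
\begin{itemize}[leftmargin=20pt]
\item[(T1)] For each $m\in\merg$ there are at most three $m$-a.e. strictly ordered $m$-measurable $\tau_0$-equilibria. When there are three, $\mb_1<\mb_2<\mb_3$, the signs of $\int_\W\mh_x(\w,\mb_i(\w))\,dm$ are $-,+,-$.
\item[(T2)] The upper and lower equilibria $\muk_0,\ml_0$ of the global attractor have nonpositive $m$-Lyapunov exponent, i.e. $\int_\W\mh_x(\w,\ml_0(\w))\,dm\le0$, and similarly for $\muk_0$.
\item[(T3)] Any two measurable equilibria $\mb,\mc$ are $m$-a.e. comparable. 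The sets $\{\mb<\mc\}$, $\{\mb=\mc\}$ and $\{\mb>\mc\}$ are $\sigma$-invariant by uniqueness of solutions, so by ergodicity exactly one of them has full measure.
\end{itemize}

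First step. By \eqref{eq:ergodic_measures_of_0_and_1}, $\int_\W\mh_x(\w,0)\,dm_+>0$. Since $\ml_0\le0$ everywhere, (T3) gives either $\ml_0=0$ or $\ml_0<0$ $m_+$-a.e. The first option would give $\ml_0$ a strictly positive $m_+$-exponent, which contradicts (T2). Hence $\ml_0<0$ $m_+$-a.e.

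Second step. Since $\muk_0\ge1$, (T3) gives either $\muk_0=1$ or $\muk_0>1$ $m_+$-a.e. In the latter case, $\ml_0<\bar0<\bar1<\muk_0$ would be four $m_+$-a.e. strictly ordered equilibria, contradicting (T1). So $\muk_0=1$ $m_+$-a.e.

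Third step. The triple $\ml_0<\bar0<\bar1$ is then $m_+$-a.e. strictly ordered. The sign pattern in (T1), or directly (T2) together with the dichotomy in the proof of (T1), gives $\int_\W\mh_x(\w,\ml_0(\w))\,dm_+<0$; this is consistent with the signs of $\bar 0$ and $\bar 1$ already known from \eqref{eq:ergodic_measures_of_0_and_1}.

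Fourth step (uniqueness). Let $\mc$ be an $m_+$-measurable equilibrium belonging to some $m_+$-a.e. strictly ordered triple. It lies in $\mA_0$, so $\ml_0\le\mc\le\muk_0$. By (T3), $\mc$ coincides $m_+$-a.e. with one of $\ml_0,\bar0,\bar1$, or else it is strictly between two of them, or strictly below $\ml_0$, or strictly above $\bar1$ on a full-measure set. The last two options are excluded because $\mc\in[\ml_0,\muk_0]$ and $\muk_0=1$ $m_+$-a.e. Any intermediate option would produce four strictly ordered equilibria, contradicting (T1). Hence any such triple consists of $\ml_0$, $\bar0$ and $\bar1$.

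The main obstacle is making (T1) and (T2) precise in this measurable, non-continuous setting. For (T2), I would argue that if $\int_\W\mh_x(\w,\ml_0(\w))\,dm>0$, then a measurable Pesin-type local unstable set would force bounded orbits below $\ml_0$ on a set of positive measure. This is impossible because $\mA_0$ contains all bounded orbits. I expect this to be available in \cite{dno4,dno5}, and I would cite it there.
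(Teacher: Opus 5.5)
Your proof is correct and follows essentially the same route as the paper. Both use four ingredients: the ordering $\ml_0\le 0<1\le\muk_0$; the nonpositivity of the Lyapunov exponents of $\ml_0$ and $\muk_0$, which the paper takes from \cite[Proposition 5.2]{dno1}; the bound of at most three a.e.\ strictly ordered measurable equilibria with sign pattern $-,+,-$, from \cite[Proposition 5.2]{dno4}; and the ergodic $0$--$1$ law for the invariant sets where two equilibria coincide. Your reading of the last assertion as $\ml_0<0$ $m_+$-a.a.\ is the right one, and your Pesin-type justification of (T2) can be carried out with the paper's own Lemma \ref{lema:6medidapositiva}, combined with the fact that $\mA_0$ contains every globally bounded orbit.
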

\begin{proof} Since $\bar 0$ and $\bar 1$ give constant solutions for all $\w$, $\ml_0\le 0<1\le\muk_0$.
Moreover, according to \cite[Proposition 5.2]{dno1},
$\int_\W\mh_x(\w,\ml_0(\w))\,dm_\pm\le0$ and $\int_\W\mh_x(\w,\muk_0(\w))\,dm_\pm\le0$, and
\cite[Proposition 5.2]{dno4} establishes that there exist at most three bounded $m_\pm$-measurable
$\tau_0$-equilibria which are strictly ordered $m_\pm$-a.e.
Observe that, since $m_\pm$ is ergodic, the $\sigma$-invariant subset of $\W$ where two
$m_\pm$-measurable equilibria are equal has measure $0$ or $1$.
The first assertion follows from these properties, since $\int_\W\mh_x(\w,0)\,dm_+>0$
and $\int_\W\mh_x(\w,\ml_0(\w))\, dm_\pm\le0$. The proof of the second one is analogous.
Once established these properties, \cite[Proposition 5.2]{dno4}
ensures that $\int_\W\mh_x(\w,\ml_0(\w))\,dm_+<0$ and $\int_\W\mh_x(\w,\muk_0(\w))\,dm_-<0$.
The last assertions are easy consequences of the initial ones.
\end{proof}
\subsection{The dynamics within $\W\times[0,1]$}\label{subsec:between0and1}
Our next result shows the complexity of the dynamics in the invariant region $\W\times(0,1)$, where
the flow $\tau_0$ is globally defined: there coexist
three $\sigma$-invariant disjoint sets: $\W_-$ and $\W_+$, with $m_-(\W_-)=m_+(\W_+)=1$, such that if
$\w\in\W_-$ (resp.~$\w\in\W_+$), then $(\wt,v_0(t,\w,x))$ travels from $\{\bar 1\}$ to $\{\bar 0\}$
(resp.~and from $\{\bar 0\}$ to $\{\bar 1\}$) as time increases for all $x\in(0,1)$;
and $\mR$, $\sigma$-invariant and residual with $m_-(\mR)=m_+(\mR)=0$,
for which all the corresponding orbits strongly oscillate between both copies of $\W$
as time increases and also as time decreases.
\begin{teor}\label{th:5banda}
There exist
\begin{itemize}
\item[\rm(i)] two $\tau_0$-invariant disjoint subsets $\W_-,\W_+\subseteq\W$
with $m_-(\W_-)=m_+(\W_+)=1$ and $m_-(\W_+)=m_+(\W_-)=0$
such that, for all $x\in (0,1)$,
\begin{equation}\label{eq:behavior_downwards}
\begin{split}
 \lim_{t\to-\infty} v_0(t,\w,x)=0\quad\text{and}\quad\lim_{t\to\infty} v_0(t,\w,x)=1
 \quad \text{if $\w\in\W_+$}\,,\\
 \lim_{t\to-\infty} v_0(t,\w,x)=1\quad\text{and}\quad\lim_{t\to\infty} v_0(t,\w,x)=0
 \quad \text{if $\w\in\W_-$}\,;
\end{split}
\end{equation}
\item[\rm(ii)] and a residual $\tau_0$-invariant subset $\mR\subseteq\W\setminus(\W_-\cap\W_+)$
such that, for every $(\w,x)\in\mR\times(0,1)$,
\[
\begin{split}
 \limsup_{t\to-\infty} v_0(t,\w,x)&=\limsup_{t\to\infty}v_0(t,\w,x)=1
 \,,\\
 \liminf_{t\to-\infty} v_0(t,\w,x)&=\liminf_{t\to\infty}v_0(t,\w,x)=0\,.
\end{split}
\]
\end{itemize}
\end{teor}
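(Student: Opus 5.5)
The plan is to linearize the dynamics inside the band through the logit change of variables. Since $\mh(\w,x)=x(1-x)(\ma_0x+\ma_1(\w))$, the strip $\W\times(0,1)$ is $\tau_0$-invariant, and for $x\in(0,1)$ we set $y:=\ln(x/(1-x))$. Along a solution $x(t)=v_0(t,\w,x)$ this gives
\[
 y'(t)=\ma_0\,x(t)+\ma_1(\wt)\,.
\]
Because $0<x(t)<1$ and $\ma_0>0$, we obtain, for $t\ge0$,
\[
 y(0)+\int_0^t\ma_1(\ws)\,ds\;\le\; y(t)\;\le\; y(0)+\int_0^t\big(\ma_0+\ma_1(\ws)\big)\,ds\,,
\]
and the reversed inequalities for $t\le0$. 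Moreover, $x\to1$ (resp.~$x\to0$) if and only if $y\to+\infty$ (resp.~$y\to-\infty$). So the whole statement reduces to the asymptotic behavior of the Birkhoff integrals of $\ma_1$ and $\ma_0+\ma_1$, which do not depend on $x$. This is why the conclusions hold for every $x\in(0,1)$ at once.

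For (i), I take $\W_+$ to be the set of points which are forward and backward Birkhoff-generic for $m_+$ with respect to the two continuous functions $\ma_1$ and $\ma_0+\ma_1$; that is, the time averages over $[0,t]$ and over $[t,0]$ converge to the $m_+$-integrals. I define $\W_-$ analogously with $m_-$. By Birkhoff's ergodic theorem, which holds in both time directions, these sets are $\sigma$-invariant with $m_\pm(\W_\pm)=1$. They are disjoint because the limit averages of $\ma_1$ differ: $\int\ma_1\,dm_-<0<\int\ma_1\,dm_+$. In particular $m_-(\W_+)=m_+(\W_-)=0$. Take $\w\in\W_+$. Then $\int_0^t\ma_1\sim t\int\ma_1\,dm_+\to+\infty$, so the lower bound forces $y(t)\to+\infty$ as $t\to\infty$. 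As $t\to-\infty$, we have $y(t)\le y(0)-\int_t^0\ma_1\to-\infty$. For $\w\in\W_-$ I use the other function instead: by \eqref{eq:checkcondition}, $\int(\ma_0+\ma_1)\,dm_-<0$. The upper bound then gives $y(t)\to-\infty$ as $t\to\infty$, and the reversed lower bound for $t\le0$ gives $y(t)\ge y(0)-\int_t^0(\ma_0+\ma_1)\to+\infty$ as $t\to-\infty$. This proves \eqref{eq:behavior_downwards}.

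For (ii), the behavior needed is that $\sup_{t\ge0}\int_0^t\ma_1=+\infty$, $\inf_{t\ge0}\int_0^t(\ma_0+\ma_1)=-\infty$, and the analogous pair for $t\le0$ holds. With the bounds above, this yields $\limsup y=+\infty$ and $\liminf y=-\infty$ in both time directions, which is exactly the claimed oscillation of $x$ between $0$ and $1$. For $N,T\in\N$ I consider
\[
 G^+_{N,T}:=\Big\{\w\in\W\,\Big|\;\int_0^t\ma_1(\ws)\,ds>N\text{ for some }t>T\Big\}\,.
\]
I also introduce the analogous sets with $\int_0^t(\ma_0+\ma_1)<-N$, and their backward versions. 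By continuity of $\w\mapsto\int_0^t\ma_1(\ws)\,ds$, each of these sets is open. It contains $\W_+$ (resp.~$\W_-$), and $\W_\pm$ is dense because the support of $m_\pm$ is a closed invariant set and hence equals $\W$ by minimality. Therefore the countable intersection $\mR$ of all these sets is a residual $G_\delta$ set. It is $\sigma$-invariant because shifting $\w$ changes the Birkhoff integrals only by a bounded additive constant over a compact time interval. It misses $\W_+\cup\W_-$, since on $\W_+$ the integrals of $\ma_1$ tend to $+\infty$ and on $\W_-$ those of $\ma_0+\ma_1$ tend to $-\infty$, so the oscillation fails in one direction.

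I expect no serious obstacle once the logit substitution is found. The only delicate points are the density of $\W_\pm$ (full support of $m_\pm$, from minimality) and the careful use of both Birkhoff directions. One should also note that the relevant function differs between the two measures: forward divergence uses $\ma_1$ for $m_+$ but $\ma_0+\ma_1$ for $m_-$. This is exactly why hypothesis \eqref{eq:checkcondition} is stated as it is.
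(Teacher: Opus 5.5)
Your argument is correct, and it reaches the result by a different mechanism than the paper.

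\textbf{Part (i).} The paper argues structurally. It applies the Pesin-type Lemma~\ref{lema:6medidapositiva} to the equilibria $\bar 0$ and $\bar 1$, whose exponents $\int_\W\ma_1\,dm_\pm$ and $-\ma_0-\int_\W\ma_1\,dm_\pm$ have the signs in \eqref{eq:ergodic_measures_of_0_and_1}. This yields a Borel equilibrium $\mb_+(\w)=\sup\{x\in[0,1]\,|\;\lim_{t\to-\infty}v_0(t,\w,x)=0\}$ that is positive $m_+$-a.e. It then invokes Proposition~\ref{prop:5solotres}: d-concavity allows at most three $m_+$-a.e.\ ordered measurable equilibria, so none lies strictly between $\bar0$ and $\bar1$, which forces $\mb_+=1$ $m_+$-a.e. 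The other limits are handled the same way. Your logit substitution instead uses the factor $x(1-x)$ to sandwich $y'$ between $\ma_1$ and $\ma_0+\ma_1$. Everything then reduces to Birkhoff's theorem for two continuous functions, with no local unstable manifolds and no counting of measurable equilibria. It also shows that only $\int_\W\ma_1\,dm_+>0$ and $\ma_0+\int_\W\ma_1\,dm_-<0$ are needed.

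\textbf{Part (ii).} Both proofs are Baire category arguments resting on the density of $\W_\pm$, which comes from the full support of $m_\pm$ under minimality. The paper uses the closed nowhere dense sets $\{\w\,|\;v_0(t,\w,x)\ge1/m\text{ for all }t\ge n\}$ for rational $x$, then extends to all $x\in(0,1)$ by monotonicity. Your open dense sets are defined through $x$-independent integrals, so uniformity in $x$ is automatic and $\mR$ gets an explicit description as the set where the Birkhoff integrals oscillate. The disjointness of $\mR$ from $\W_+\cup\W_-$ also comes for free; this is what the inclusion in the statement is meant to say, since $\W_-\cap\W_+$ is empty.

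\textbf{Trade-off.} The paper's route uses only the signs of the Lyapunov exponents of $\{\bar0\}$ and $\{\bar1\}$ together with d-concavity. It therefore carries over to any d-concave $\mh$ with these two nonhyperbolic copies of the base. Yours depends on the specific product form $x(1-x)(\ma_0x+\ma_1(\w))$, which is what gives the two-sided bound on $y'$.
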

\begin{proof}
(i) Since $\int_\W\mh_x(\w,0)\,dm_+>0$, Lemma \ref{lema:6medidapositiva} (proved below)
shows the existence of $\W_0\subseteq\W$ with $m_+(\W_0)=1$ such that the map
\[
 \mb_+(\w):=\sup\left\{x\in[0,1]\,\Big|\,\lim_{t\to-\infty} v_0(t,\w,x)=0\right\}
\]
satisfies $\mb_+(\w)>0$ for all $\w\in\W_0$. It is not hard to check that
$\W_0$ is $\tau_0$-invariant: in fact, $\mb_+$ is a Borel measurable
(and hence $m_+$-measurable) $\tau_0$-equilibrium. Since $0<\mb_+\le 1$ in full $m_+$ measure,
Proposition \ref{prop:5solotres} shows that it coincides with the constant equilibrium $\bar 1$
in a $\tau_0$-invariant set $\W_+$ with $m_+(\W_+)=1$.
This implies the first equality for $\W_+$ in \eqref{eq:behavior_downwards}. The second one follows from
an analogous reasoning with forward time and $\int_\W\mh_x(\w,1)\,dm_+<0$, and a possible
refinement of $\W_+$. The remaining ones are obtained with the same arguments applied to $m_-$.
Since the behaviors described in the first and second lines of \eqref{eq:behavior_downwards} are incompatible,
the set  $\W_-\cap\W_+$ is empty, and hence $m_+(\W_-)=0$ and $m_-(\W_+)=0$.
\smallskip

(ii) Note that the \upomeg-limit set $\upomega(\w,x)$ (resp. \upalfa-limit set $\upalpha(\w,x)$)
of any $(\w,x)\in\W\times[0,1]$ is contained in $\W\times[0,1]$ and projects onto $\W$.
If $\upomega(\w,x)\subset\W\times(0,1)$, then the upper $\tau_0$-equilibria
of this compact set would be an $m_\pm$-measurable equilibrium strictly between $\bar 0$ and $\bar 1$,
which does not exist: see Proposition \ref{prop:5solotres}. Thus, $\upomega(\w,x)$
intersects either $\{\bar 0\}$ or $\{\bar 1\}$, and hence it contains at least one of them.
The same applies to $\upalpha(\w,x)$. The set $\mR$ will be composed by those points $\w$ for which
$\upalpha(\w,x)\cap\upomega(\w,x)\supseteq \{\bar 0\}\cup\{\bar 1\}$ for all $x\in(0,1)$.

For each $x\in(0,1)$, we define
\[
\W_{m,n}^x:=\left\{
\w\in\W\,|\;\, v_0(t,\w,x)\ge 1/m\text{ for all }t\ge n
\right\}\,,
\]
which is clearly closed.
According to \eqref{eq:behavior_downwards}, $\W_{m,n}^x\supseteq\W_+$, and hence it is nonempty,
while $\W_{m,n}^x\cap\W_-$ is empty. This last property ensures that the interior of $\W_{m,n}^x$ is empty:
otherwise $m_-(\W_{m,n}^x)>0$, since $m_-$ is ergodic, and this would preclude
$\W_{m,n}^x\subseteq\W\setminus\W_-$.
Therefore, $\W_{m,n}^x$ is nowhere dense for all $x\in(0,1)$ and $m,n\in\N^+$.

Note that
\[
 \bigcup_{m,n\in\mathbb{N}^+}\W_{m,n}^x=\big\{\w\in\W\,|\;\{\bar 0\}\not\subseteq\upomega(\w,x)\big\}\,:
\]
the $\subseteq$ inclusion is clear; and if $\w\in\W$ belongs to the set on the right, then
$\liminf_{n\to\infty} v_0(n,\w,x)>0$ and so $\w\in\W^x_{m,n}$ for some $m,n\in\N^+$. Hence,
\[
 \bigcup_{q\in\Q\,\cap\,(0,1)}\bigcup_{m,n\in\mathbb{N}^+}\W_{m,n}^q=\big\{\w\in\W\;|\;\exists\,q\in\Q\,\cap\,(0,1)\text{ such that }
 \{\bar 0\}\not\subseteq\upomega(\w,q)\big\}
\]
is a first category set.
Analogously,
\[
 \big\{\w\in\W\;|\;\exists\,q\in\Q\,\cap\,(0,1)\text{ such that }\{\bar 1\}\not\subseteq\upomega(\w,q)\big\}
\]
is a first category set and, therefore, the complement of the union,
\[
 \big\{\w\in\W\;|\;\{\bar 0\}\cup\{\bar 1\}\subseteq\upomega(\w,q)\;\,\forall\, q\in(0,1)\cap\Q\big\}
\]
is a residual subset of $\W$. Let us check that the previous set coincides with
\[
 \mR_\upomega:=\big\{\w\in\W\;|\;\{\bar 0\}\cup\{\bar 1\}\subseteq\upomega(\w,x)\;\,\forall\, x\in(0,1)\big\}\,,
\]
so $\mR_\upomega$ is a residual subset of $\W$: given $x\in(0,1)$ we find $q_1\in(0,x)\cap\Q$ and $q_2\in(x,1)\cap\Q$,
as well as sequences $(t_n^1)\uparrow\infty$ and $(t_n^2)\uparrow\infty$ with
$\lim_{n\to\infty}v_0(t_n^1,\w,q_1)=1$ and  $\lim_{n\to\infty}v_0(t_n^2,\w,q_2)=0$; so, since
$v_0(t_n^1,\w,q_1)\le v_0(t_n^1,\w,x)\le 1$ and $v_0(t_n^2,\w,q_2)\ge v_0(t_n^2,\w,x)\ge 0$, we conclude that
$\upomega(\w,x)$ intersects (and hence contains) both $\{\bar 1\}$ and $\{\bar 0\}$.

Reasoning analogously, we prove that
\[
 \mR_\upalpha:=\big\{\w\in\W\;|\;\{\bar 0\}\cup\{\bar 1\}\subseteq\upalpha(\w,x)\;\forall x\in(0,1)\big\}
\]
is a residual subset of $\W$, and therefore $\mR:=\mR_\upomega\cap\mR_\upalpha$
is also residual. Clearly, if $\w\in\mR$ and $x\in(0,1)$, then
$v_0(t,\w,x)$ approaches $\bar 0$ and $\bar 1$ both as $t\to\infty$ and $t\to-\infty$,
since $\upalpha(\w,x)\cap\upomega(\w,x)\supseteq \{\bar 0\}\cup\{\bar 1\}$.
\end{proof}
The previous proof uses the next lemma,
which follows from the general theory of Pesin (see, e.g., \cite[Theorem 7.3.10]{arnold}) and
is similar in spirit to the discrete result given in \cite[Proposition 3.3]{jager1}.
\begin{lema} \label{lema:6medidapositiva}
Let $m\in\merg$ be fixed, and let $\mb\colon\W\to\R$ be a bounded $m$-measurable equilibrium.
If
\[
\int_\W\mh_x(\w,\mb(\w))\,dm>0\qquad\left(\text{resp. }\int_\W\mh_x(\w,\mb(\w))\,dm<0\right)\,,
\]
then there exist a set $\W_0\subseteq\W$ with $m(\W_0)=1$ and, for each $\w\in\W_0$,
a constant $\rho(\w)>0$ such that $\lim_{t\to-\infty} (v(t,\w,x)-\mb(\wt))=0$
(resp. $\lim_{t\to\infty} (v(t,\w,x)-\mb(\wt))=0$) whenever $|x-\mb(\w)|<\rho(\w)$.
\end{lema}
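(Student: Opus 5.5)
We argue directly through the linearized variational equation along the equilibrium, together with a Taylor estimate. We treat the case $\int_\W\mh_x(\w,\mb(\w))\,dm<0$, working forward in time; the positive case follows by reversing time, replacing $\mh$ with $-\mh$ and $t$ with $-t$. Write $a(\w):=\mh_x(\w,\mb(\w))$. Since $\mb$ is bounded, $a$ is bounded and $m$-measurable, and $\int_\W a\,dm=:-2\gamma<0$.

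The first step is to apply Birkhoff's ergodic theorem to $a$ along the flow. This gives a $\sigma$-invariant set $\W_1$ with $m(\W_1)=1$ such that $\frac1t\int_0^t a(\ws)\,ds\to-2\gamma$ for every $\w\in\W_1$. For each such $\w$ there is then a constant $C(\w)\ge1$ with
\[
\int_s^t a(\w{\cdot}r)\,dr\le C(\w)-\tfrac32\gamma\,(t-s)+\tfrac{\gamma}{2}\,s\quad\text{for all }0\le s\le t.
\]
This is the usual tempered estimate: $\int_0^t a=-2\gamma t+o(t)$, and the difference of the $o$-terms at $t$ and $s$ is bounded by a constant plus $\frac\gamma2(s+t)$. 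Coarsening the constants yields an exponential dichotomy-type bound with a slowly growing prefactor $C(\w)e^{\ep s}$, where $\ep<\gamma$.

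The second step is to set $y(t):=v(t,\w,x)-\mb(\wt)$. Then
\[
y'=a(\wt)\,y+r(\wt,y),\qquad |r(\w,y)|\le M y^2 \text{ for } |y|\le1,
\]
where $M$ bounds $|\mh_{xx}|$ on the compact set $\W\times[-\|\mb\|_\infty-1,\|\mb\|_\infty+1]$. This uses \ref{d1} together with the boundedness of $\mb$. We rewrite the equation by variation of constants:
\[
y(t)=e^{\int_0^t a}\,y(0)+\int_0^t e^{\int_s^t a}\,r(\w{\cdot}s,y(s))\,ds.
\]
A bootstrap/Gronwall argument then gives the bound: if $|y(0)|<\rho(\w)$, with $\rho(\w)$ small compared to $1/(M C(\w))$, then $|y(t)|\le 2C(\w)e^{-\gamma t}|y(0)|$ for all $t\ge0$. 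In the integral term, the factor $e^{\gamma s/2}$ coming from the tempered constant is absorbed by $y(s)^2\le 4C^2e^{-2\gamma s}\rho^2$, so that the integral is bounded by $\frac{4MC^3\rho^2}{\gamma}e^{-\gamma t}$, which is at most $C\rho\, e^{-\gamma t}$ for $\rho$ small. A continuity argument closes the bootstrap. Hence $y(t)\to0$, which is the claim with $\W_0:=\W_1$.

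The main obstacle is the non-uniformity in $\w$. Only the Birkhoff averages are available, so the constant $C(\w)$ can be large, and the time-$s$ growth factor has to be controlled against the quadratic decay of the remainder. The tempered-bound step is where care is needed: we choose the splitting $2\gamma=\frac32\gamma+\frac12\gamma$ so that the loss $e^{\gamma s/2}$ is strictly beaten by $e^{-2\gamma s}$ from $y^2$. Measurability of $\mb$ matters only to make $a$ integrable and Birkhoff applicable. Alternatively, we could invoke Pesin's stable manifold theorem for the one-dimensional cocycle, as in \cite[Theorem 7.3.10]{arnold}, which packages exactly this argument.
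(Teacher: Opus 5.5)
Your proof is correct, but it takes a different and heavier route than the paper. You linearize along $\mb$, bound the quadratic remainder through $\mh_{xx}$, and close a variation-of-constants bootstrap against a tempered Birkhoff estimate. In effect, you reprove the one-dimensional case of the Pesin argument.

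The paper instead exploits that the problem is scalar. For $y(t)=v(t,\w,x)-\mb(\wt)$ one has exactly
\[
 y(t)=y(0)\exp\Bigl(\int_0^t\!\int_0^1\mh_x\bigl(\ws,\mb(\ws)+r\,y(s)\bigr)\,dr\,ds\Bigr)
\]
(the paper writes this for $t\le0$, treating the case of positive integral). So there is no remainder term at all. Uniform continuity of $\mh_x$ on a compact strip gives $\delta>0$ such that, while $|y|\le\delta$, the inner integral is within $\ep$ of $\mh_x(\ws,\mb(\ws))$. Choosing $\ep<\bigl|\int_\W\mh_x(\w,\mb(\w))\,dm\bigr|$, Birkhoff's theorem makes the resulting exponent tend to $-\infty$, with a finite supremum $k(\w)$ over the relevant half-line; this plays the role of your tempered constant. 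With $\rho(\w)=\delta/k(\w)$, a plain continuity argument then keeps the solution in the tube for all time.

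The paper's route needs only continuity of $\mh_x$, not $\mh_{xx}$. It also dispenses with the rate splitting $2\gamma=\frac{3}{2}\gamma+\frac{1}{2}\gamma$ and the quadratic bootstrap. Yours is the version that survives for higher-dimensional cocycles, where the nonlinear difference cannot be written as an exact exponential.

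Two minor slips. First, bounding the difference of the $o$-terms by a constant plus $\frac{\gamma}{2}(s+t)$ gives $\gamma s$, not $\frac{\gamma}{2}s$, in your tempered inequality. Either use $\frac{\gamma}{4}$ in the $o$-bound, or note that the bootstrap still closes with $\gamma s$: the remainder integrand is then of order $e^{-\frac{3}{2}\gamma t}e^{\frac{1}{2}\gamma s}$, whose integral over $[0,t]$ is still $O(e^{-\gamma t})$. Second, you also need $2C(\w)\rho(\w)\le1$, so that the Taylor bound $|r|\le My^2$ remains applicable along the bootstrap.
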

\begin{proof}
We assume that $\int_\W\mh_x(\w,\mb(\w))\,dm>\ep$ for an $\ep>0$ and take $\delta>0$ such that
\begin{equation}\label{eq:2firstbound}
 \mh_x(\w,\mb(\w)+x)\geq \mh_x(\w,\mb(\w))-\ep
\end{equation}
for all $\w\in\W$ and $|x|\leq\delta$.
Birkhoff's Ergodic Theorem yields the existence of $\W_0\subseteq\W$ with $m(\W_0)=1$ such that,
for all $\w\in\W_0$,
\[
 \lim_{t\to-\infty}\frac{-1}{t}\int_t^0\big(\mh_x(\ws,\mb(\ws))-\ep\big)\,ds=
 \int_\W\big(\mh_x(\w,\mb(\w))-\ep\big)\,dm>0
\]
and hence
\begin{equation}\label{eq:2limitproof}
\lim_{t\to-\infty}\left(-\int_t^0\mh_x(\ws,\mb(\ws))\,ds-\ep\,t\right)=-\infty\,.
\end{equation}
So, if $\w\in\W_0$,
\[
 k(\w):=\sup_{t\leq0}\;\exp\left(-\int_t^0\mh_x(\ws,\mb(\ws))\,ds\,-\ep\,t\right)\in(0,\infty)\,.
\]
Let us define $y(t,\w,x):=v(t,\w,x)-\mb(\wt)$. By integrating in $[0,1]$ the derivative with respect to
$r$ of $\mh(\wt,\mb(\wt)+r\,y(t,\w,x))$, we obtain $(d/dt)\,y(t,\w,x)=
y(t,\w,x)\int_0^1 \mh_x(\wt,\mb(\wt)+r\,y(t,\w,x))\,dr$, and so
\[
y(t,\w,x)=(x-\mb(\w))\exp\left(-\int_t^0\int_0^1\mh_x\big(\ws,\mb(\ws)+r\,y(s,\w,x)\big)\big)\,dr\,ds\right).
\]
We take $x$ with $|x-\mb(\w)|<\rho(\w):=\delta/k(\w)<\delta$ and deduce from
\eqref{eq:2firstbound} that, if $|y(t)|=|v(s,\w,x)-\mb(\ws)|\le\delta$ for all $s\in[t,0]$, then
\[
\begin{split}
 &|v(t,\w,x)-\mb(\wt)|\leq|x-\mb(\w)|\,\exp\left(-\!\int_t^0\!\mh_x(\ws,\mb(\ws))\,ds-\ep\,t\!\right)\\
 &\qquad\qquad\le k(\w)|x-\mb(\w)|<\delta\,.
\end{split}
\]
An easy contradiction argument ensures that the inequality holds for all $t\le 0$,
and hence \eqref{eq:2limitproof} proves the assertion of the lemma.
The proof is analogous in the case of negative integral.
\end{proof}
\subsection{The bifurcation diagram}\label{subsec:minimallbbifurcation}
Let us describe the bifurcation diagram of
\begin{equation}\label{eq:familyjump}
 x'=-x\,(x-1)(\ma_0\,x+\ma_1(\wt))+\lb\,,
\end{equation}
with $\ma_0$ and $\ma_1$ described at the beginning of Section \ref{sec:6}.
As usual, $\tau_\lb(t,\w,x)=(\wt,v_\lb(t,\w,x))$ represents the flow on $\WR$, which is globally
forward defined and has a global attractor $\mA_\lb$, since \ref{d1} and \ref{d2} hold: see
Remark \ref{rm:3pre}.1.
\begin{prop}
For $\lb=0$, $\{\bar 0\}$ and $\{\bar 1\}$ are nonhyperbolic
minimal sets and the unique separated compact $\tau_0$-invariant sets.
For any $\lb\ne0$, $\mA_\lb=\{\muk_\lb\}=\{\ml_\lb\}$,
it is an attractive hyperbolic $\tau_\lb$-copy of $\W$,
it is strictly above 1 for $\lb>0$ and strictly below $0$ for $\lb<0$,
and $\lim_{t\to\infty}(v_\lb(t,\w,x)-\muk_\lb(\wt))=0$
for any $(\w,x)\in\W\times\R$. In particular,
$\lb_l^*=\lb_u^*=0$ is a jump bifurcation point for \eqref{eq:familyjump},
which is hence in case \ref{p2}.
\end{prop}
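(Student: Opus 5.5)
The statement for $\lb=0$ is exactly Proposition \ref{prop:5dosminimales}, so only $\lb\neq0$ and the final assertions need work. Take $\lb>0$; the case $\lb<0$ is symmetric, with $\bar0$ playing the role of $\bar1$. Since $\mh(\w,1)+\lb=\lb>0$, the constant map $\bar1$ is a strict global lower solution of \eqref{eq:familyjump}, hence a strong $\tau_\lb$-subequilibrium. Every orbit starting at $x\ge1$ therefore stays above $1$ and is strictly increased after a fixed time, and the same comparison shows that every forward semiorbit eventually enters $\W\times(1,\infty)$, uniformly on compact sets of initial data. The plan is to prove that $\mA_\lb\subset\W\times(1,\infty)$ and that it reduces to a single hyperbolic copy.

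\textbf{Step 1: $\mA_\lb$ lies strictly above $1$.} We have $\muk_\lb>\muk_0\ge1$ by the strict monotonicity in Remark \ref{rm:3pre}.1. Proposition \ref{prop:2bocata}, applied between $\bar1$ and a large constant upper solution, gives a maximal invariant compact set strictly above $\bar1$. It remains to exclude points of $\mA_\lb$ with $x\le1$. For such a point, the full orbit is bounded and its backward orbit stays at or below $1$, since crossing $1$ is only possible upward. Hence $\ml_\lb\le1$ on some set, and its $\upalpha$-limit gives a compact invariant set $\mK\subset\W\times(-\infty,1]$ projecting onto $\W$. Since $\mK$ is below the hyperbolic candidate, this would give two disjoint invariant compacts for $\tau_\lb$, and we want a contradiction. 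The cleanest route is Lyapunov exponents: for an equilibrium $\mb\le1$ we have $\mh_x(\w,x)=-3\ma_0x^2+2(\ma_0-\ma_1)x+\ma_1$. I would instead compare with $\lb=0$. A bounded $\tau_\lb$-orbit below $1$ is a strict upper solution for $\tau_0$ in the backward sense, and it is pushed down relative to $\tau_0$ orbits. Its $\upalpha$-limit set would then be a $\tau_\lb$-invariant compact below $\bar 1$, with $\ml_\lb<\ml_0$ on the minimal set $\W$ being impossible. I expect this to be the main obstacle, and I would try first the measure argument: on $m_+$, Proposition \ref{prop:5solotres} and \cite[Proposition 5.2]{dno4} limit the number of $m_+$-measurable equilibria to three, and an extra equilibrium below $1$ at parameter $\lb>0$ would force, by continuity in $\lb$ and d-concavity, an ordering incompatible with $\int\mh_x(\w,1)\,dm_+<0$.

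\textbf{Step 2: hyperbolicity.} Once $\mA_\lb\subset\W\times(1,\infty)$ is known, consider $\mh_x$ on $x\ge1$. From Proposition \ref{prop:5solotres} we have $\int\mh_x(\w,1)\,dm_+<0$ and $\int\mh_x(\w,\muk_0)\,dm_-<0$. Since $\muk_\lb>\muk_0$ is a copy strictly above, Theorem \ref{th:3Dbifur}-type d-concavity arguments show that the upper equilibrium's exponents are negative for both $m_\pm$. Every ergodic measure is $m_+$ or $m_-$, so Theorem \ref{th:3copia}, whose condition (a) holds by minimality, yields that $\{\muk_\lb\}$ is an attractive hyperbolic copy. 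Because $\{\muk_\lb\}$ is hyperbolic and $\ml_\lb>1$, an argument as in Remark \ref{rm:3pre}.3 and Corollary \ref{coro:3noS} gives $\ml_\lb=\muk_\lb$, since no repelling middle copy exists for a non-S-shaped diagram. Global convergence $v_\lb(t,\w,x)-\muk_\lb(\wt)\to0$ then follows from the global attractor being a single attracting copy.

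\textbf{Step 3: conclusion.} Steps 1 and 2 give $\lb_u^*\le0\le\lb_l^*$. Neither $\ml_0$ nor $\muk_0$ is a hyperbolic copy, by Proposition \ref{prop:5solotres}: $\muk_0$ equals $1$ $m_+$-a.e. and exceeds $1$ $m_-$-a.e., which is impossible for a continuous map on a minimal set carrying two different values. Hence $\lb_u^*=\lb_l^*=0$, case \ref{p2}. The jump follows from Theorem \ref{teor:4jumpbifurcationp2p3}(i) with \eqref{eq:4equivp2}: by the note after that theorem, the two disjoint invariant compacts $\{\bar0\}$ and $\{\bar1\}$ satisfy the condition with $\rho=1$, since $\ml_0\le0<1\le\muk_0$.
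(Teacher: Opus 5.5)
Your Step~3 is correct, but Steps~1 and~2, which carry all the content for $\lb\neq0$, have genuine gaps.

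\textbf{Step 1.} Step~1 is never completed. You flag the exclusion of points of $\mA_\lb$ with $x\le1$ as the main obstacle and offer three sketches, none of which closes. The comparison with $\lb=0$ cannot produce a contradiction, since $\ml_\lb>\ml_0$ holds anyway for $\lb>0$ (Remark~\ref{rm:3pre}.1). The measure-counting idea is not carried out. And ``$\mK$ is below the hyperbolic candidate'' uses a hyperbolicity you only claim in Step~2. Likewise, your opening assertion that every forward semiorbit eventually enters $\W\times(1,\infty)$ does not follow from comparison with $\bar1$, which only controls orbits starting at $x\ge1$; it is essentially the conclusion.

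\textbf{Step 2.} Here the negativity of the exponents rests on the wrong inputs. The negative exponents $\int_\W\mh_x(\w,1)\,dm_+<0$ and $\int_\W\mh_x(\w,\muk_0(\w))\,dm_-<0$ of the attracting equilibria at $\lb=0$ do not propagate to $\lb>0$. For instance, in an autonomous S-shaped diagram the upper equilibrium is attracting for $\lb<\lb_-$ but is the nonhyperbolic fold point at $\lb=\lb_-$. What does propagate is the positivity of the exponents of the repelling ones, $\int_\W\mh_x(\w,0)\,dm_+>0$ and $\int_\W\mh_x(\w,1)\,dm_->0$; this is what the paper feeds into \cite[Proposition 5.9]{dno1}.

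To see this directly, let $\mb_0$ be a $\tau_0$-equilibrium and $\mb_\lb\ge\mb_0+\delta$ an $m$-measurable $\tau_\lb$-equilibrium. Integrating $(\log(\mb_\lb-\mb_0))'$ against $m$ gives $\int_\W\frac{\mh(\w,\mb_\lb)-\mh(\w,\mb_0)}{\mb_\lb-\mb_0}\,dm=-\lb\int_\W\frac{dm}{\mb_\lb-\mb_0}<0$. Concavity of $\mh_x$ bounds that difference quotient from below by $\frac12\big(\mh_x(\w,\mb_0)+\mh_x(\w,\mb_\lb)\big)$. Hence $\int_\W\mh_x(\w,\mb_\lb)\,dm<-\int_\W\mh_x(\w,\mb_0)\,dm$, which yields a sign only when the exponent of $\mb_0$ is positive. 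Two further points: you call $\muk_\lb$ a copy before knowing it is continuous, and the negativity is needed for every $m_\pm$-measurable equilibrium in the relevant compact set, not just for $\muk_\lb$.

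\textbf{How to repair it.} With this inequality you can reorder the argument, and Step~1 becomes unnecessary.
\begin{enumerate}
\item Take the set $\mK^u_\lb\subset\W\times(1,\rho]$ given by Proposition~\ref{prop:2bocata} between the strong subequilibrium $\bar1$ and a constant superequilibrium $\rho$. It lies at positive distance from $\{\bar1\}$, so the inequality applies with $\mb_0=\bar0$ for $m_+$ and $\mb_0=\bar1$ for $m_-$. Theorem~\ref{th:3copia} then makes $\mK^u_\lb$ an attractive hyperbolic copy $\{\mb_\lb\}$.
\item Show the diagram is not S-shaped, which you tacitly assume when invoking Corollary~\ref{coro:3noS}. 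By Theorem~\ref{th:3Dbifur}(ii), in an S-shaped diagram at least one of $\{\ml_\lb\}$, $\{\muk_\lb\}$ is a hyperbolic copy for every $\lb$. Your Step~3 observation excludes this at $\lb=0$.
\item Theorem~\ref{th:3casosS} (cases (c), (d), (e)), together with the minimality of $\W$, rules out any compact $\tau_\lb$-invariant set disjoint from $\{\mb_\lb\}$. So every $\upomega$-limit set meets, and hence contains, $\{\mb_\lb\}$.
\item Uniform exponential stability then gives $v_\lb(t,\w,x)-\mb_\lb(\wt)\to0$ for all $(\w,x)$. Therefore $\mA_\lb=\{\mb_\lb\}$, strictly above $1$.
\end{enumerate}
This is the paper's route.
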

\begin{proof}
The assertion for $\lb=0$ is proved in Proposition \ref{prop:5dosminimales}.

We work for a fixed $\lb>0$. Since $\mh(\w,1)+\lb=\lb>0$, $\bar 1$ is a strong
$\tau_\lb$-subequilibrium, while $\lim_{x\to\infty}\mh(\w,x)=-\infty$ uniformly on $\W$
provides $\rho>1$ with $\mh(\w,\rho)+\lb\le0$, which defines a $\tau_\lb$-superequilibrium.
Let $\mK^u_\lb\subset\W\times(1,\rho]$ be
the $\tau_\lb$-invariant compact set provided by Proposition \ref{prop:2bocata}.
Since $\int_\W\mh_x(\w,1)\,dm_->0$ and $\int_\W\mh_x(\w,0)\,dm_+>0$,
\cite[Proposition 5.9]{dno1} yields $\int_\W \mh_x(\w,\mb_\lb(\w))\,dm_-<0$ and
$\int_\W \mh_x(\w,\mb_\lb(\w))\,dm_+<0$
for any $m_-$-measurable (resp. $m_+$-measurable) $\tau_\lb$-equilibrium $\mb_\lb\colon\W\to\R$
with graph in $\mK_\lb^u$. So, the upper Lyapunov exponent of $\mK^u_\lb$ is strictly negative
(see, e.g., \cite[Section 2.1]{dno5}), and hence Theorem \ref{th:3copia} ensures that $\mK_\lb^u$
is an attractive hyperbolic $\tau_\lb$-copy of $\W$, say $\{\mb_\lb\}$.

Since $\W$ is minimal and the dynamics for $\lb=0$ precludes an S-shaped
bifurcation diagram (see Theorem \ref{th:3Dbifur}), we can deduce from Theorem \ref{th:3casosS}
that there is no compact $\tau_\lb$-invariant set uniformly separated from $\{\mb_\lb\}$:
a hyperbolic copy of $\W$ would fit case (c), and a nonhyperbolic one
would fit cases (d) or (e). (Recall that the minimality of $(\W,\sigma)$ ensures that
any compact $\tau_\lb$-invariant set projects onto $\W$).
So, for any $(\w,x)\in\WR$, $\upomega(\w,x)$ intersects and
therefore contains $\{\mb_\lb\}$. Since $\{\mb_\lb\}$ is uniformly exponentially stable at $+\infty$
(see Definition \ref{def:unifexpstable}),
$\lim_{t\to\infty}(v_\lb(t,\w,x)-\mb_\lb(\wt))=0$, and this ensures that
$\mA_\lb=\{\mb_\lb\}$ is the $\tau_\lb$-global attractor. The proof is analogous for $\lb<0$.
The last assertion follows easily from definitions \eqref{def:4lb*},
the description of case \ref{p2}, and Definition \ref{def:4jump}. (In fact,
condition \eqref{eq:4equivp2} of Theorem \ref{teor:4jumpbifurcationp2p3} holds.)
\end{proof}
We complete the description by saying that
$\R\setminus\{0\}\to C(\W,\R),\;\lb\mapsto\muk_\lb=\ml_\lb$ is continuous on the uniform topology
of $C(\W,\R)$ (see, e.g., \cite[Theorem 2.3]{dno4}); that
\[
\lim_{\lb\to0^+}\muk_\lb(\w)=\lim_{\lb\to0^+}\ml_\lb(\w)=\muk_0(\w)\ge1\,,\quad
\lim_{\lb\to0^-}\ml_\lb(\w)=\lim_{\lb\to0^-}\muk_\lb(\w)=\ml_0(\w)\le0
\]
for all $\w\in\W$ (see \cite[Theorem 5.5]{dno1}); and that
$\lb\mapsto\ml_\lb(\w),\muk_\lb(\w)$ are both strictly increasing on $\R$ for all $\w\in\W$
(see again Remark~\ref{rm:3pre}.1).
\subsection{A numerical example using Harper map}\label{subsec:harper}
Now we present a numerical example of the jump bifurcation just described.
Constructing examples of minimal sets supporting multiple
ergodic measures is generally not an easy task. For this reason, we will use one
of the simplest examples in the literature that exhibits this property:
such a minimal set appears in the discrete skewproduct flow induced by
the Harper map, a model with significant connections to quantum mechanics,
which, moreover, is more suitable for numerical simulations
than the examples already mentioned by Million\v{s}\v{c}ikov \cite{milon} and Vinograd \cite{vinograd}.
We will work with its angular formulation, that is, with
\begin{equation}\label{eq:harper}
\begin{split}
\bar \theta&=\theta+\alpha\,,\\
\bar \phi&=\arccot\left(-\tan(\phi)-\lb-2\,b\cos(2\pi\theta)\right).
\end{split}
\end{equation}
The iterations of the map $T(\theta,\phi):=(\bar\theta,\bar\phi)$ define a discrete (skewproduct)
flow $\varsigma$ on $\To\times\pr$, where $\To\equiv\R/\Z$ is the one-dimensional torus and
$\pr\equiv \R/\pi\Z$ is the one-dimensional projective line: $\varsigma(n,\theta,\phi)=T^n(\theta,\phi)$.
The dependence on $b$ and $\lb$ and $\alpha$ is omitted to simplify the notation.
The parameter $\lb$ plays the role of the energy in the model, that is,
it is the spectral parameter. The value $b$ controls the strength of the quasiperiodic
forcing and determines how strongly the angle dynamics is influenced by the external modulation.
Finally, $\alpha$ is an irrational real number, so the base flow on $\To$ is minimal
and uniquely ergodic.

The map $\bar\phi$ of \eqref{eq:harper} can be obtained by taking
$\phi:=\arccot(z_2/z_1)$ from the map on $\R^2$ given by
$\left[\begin{smallmatrix}\bar z_1\\\bar z_2\end{smallmatrix}\right]
=\left[\begin{smallmatrix}\;\;0&\,1\\-1&\,-\lb-2\,b\cos(2\pi\theta)\end{smallmatrix}\right]
\left[\begin{smallmatrix}z_1\\z_2\end{smallmatrix}\right]$.
The $n$-th iteration of $\left(\theta,\left[\begin{smallmatrix}z_1\\z_2\end{smallmatrix}\right]\right)
\mapsto\left(\bar\theta,\left[\begin{smallmatrix}\bar z_1\\\bar z_2\end{smallmatrix}\right]\right)$ provides
$\left(\theta(n),\left[\begin{smallmatrix}z_1(n)\\z_2(n)\end{smallmatrix}\right]\right)$, with
$z(n+1)=-z(n-1)-2b\cos 2\pi(\theta+n\alpha)\,z(n)-\lb\,z(n)$, and this equality for all $n\in\N$
can be identified with the
spectral problem $\mathcal L_\theta z=\lb z$ for the {\em almost Mathieu operator\/} on
$\ell^2(\mathbb{Z})$, given by
\[
 (\mathcal L_\theta z)(n):=-z(n+1)-z(n-1)-2\,b\cos(2\pi(\theta+n\alpha))\,z(n)\,.
\]
It is well-known that the {\em spectrum\/} of $\mL_\theta$ (i.e.,
the set of $\lb\in\C$ such that $\mL_\theta-\lb$ does not admit a
bounded inverse operator) is a compact subset of $\R$ and is common for all $\theta\in\To$.
Since $\alpha\notin\Q$, this spectrum is a Cantor set: see
\cite{tenmartini}.

It is also known (see, e.g., \cite{haropuig} and \cite{jnot}) that,
whenever $|b|\ge1$ and $\lb$ belongs to the spectrum, there exists
a unique $\varsigma$-minimal subset $\Lambda\subseteq\To\times\pr$
which supports exactly two different $\varsigma$-ergodic measures $\tilde m_\pm$.
They are given by
\[
 \int_{\To\times\pr}\mg\, d\tilde m_\pm=\int_{\To}\mg(\theta,\varphi_\lb^\pm(\theta))\, dl
\]
for $\mg\in C(\To\times\pr,\R)$, where
$\varphi^\pm_\lb\colon\To\to\pr$ are two noncontinuous maps with
$\varsigma$-invariant graph within $\Lambda$ and
$l$ is the normalized Lebesgue measure on $\To$, which is the unique ergodic measure for the flow
given by the iterations of the map $\bar\theta$.
Finally, it turns out that, if $\lb$ is the lowest point of the spectrum, then
$\Lambda\subset\To\times(0,\pi)$ (see e.g.~\cite[Remark 6.5]{jnot}).
In these conditions,
if $g(\theta,\phi)=\bar\phi$ (see \eqref{eq:harper}), then
\begin{equation}\label{def:6beta}
 \int_{\To}\log\left(\frac{\partial g}{\partial\phi}(\theta,\varphi_\lb^+(\theta))\right) dl
 =-\int_{\To}\log\left(\frac{\partial g}{\partial\phi}(\theta,\varphi_\lb^-(\theta))\right)dl
 =:\beta(\lb)>0
\end{equation}
(see, e.g., \cite[Section 6]{jnot}).

Let us explain how the classical {\em suspension}
construction allows us to define, starting from $(\Lambda,\varsigma)$,
a minimal real flow $(\W,\sigma)$ on a compact metric space supporting exactly two
ergodic measures. On $\Lambda\times\R$, we consider the equivalence relation
$(\theta,\phi,s)\sim(T(\theta,\phi),s-1)$, define $\W:=(\Lambda\times\R)/\!\sim\,$,
represent the class of $(\theta,\phi,s)$ by $[\theta,\phi,s]$, and observe that each class has a
unique representative $(\theta,\phi,s)$ with $s\in[0,1)$ which will be chosen whenever
fixing one is needed. Then, $\sigma(t,[\theta,\phi,s]):=[\theta,\phi,s+t]$ defines a continuous
flow on $\W$, which satisfies $\sigma(t,[\theta,\phi,s])=
[\varsigma(n,\theta,\phi),r]$ if $s+t=n+r$ for $n\in\Z$.
So, $\sigma(n,[\theta,\phi,0])=[\varsigma(n,\theta,\phi),0]$ for all $n\in\Z$ and hence,
if we identify the point $(\theta,\phi)\in\Lambda$ with $[\theta,\phi,0]\in\W$, we recover
$\varsigma$ from $\sigma$ by restricting to discrete values of the time. We also define
$i_s\colon\Lambda\to\W,\;(\theta,\phi)\mapsto[\theta,\phi,s]$ for each $s\in[0,1]$.

It is immediate to deduce from the minimality of $(\Lambda,\varsigma)$
that the $\sigma$-orbit of any point $[\theta,\phi,s]\in\W$
is dense in $\W$, and hence $(\W,\sigma)$ is also minimal.
In what follows, we will prove the existence of exactly
two $\sigma$-ergodic measures on $\W$, $m^\pm$, defined as
\[
 \int_\W\mg([\theta,\phi,s])\,dm^\pm:=\int_0^1\int_{\To\times\pr}
 \mg\!\circ\!i_s(\theta,\phi)\,d\tilde m^\pm ds= \int_0^1\int_\To
 \mg(\theta,\varphi_\lb^\pm(\theta),s)\,dl\,ds
\]
for $\mg\in C(\W,\R)$. It is clear that $m^+\ne m^-$. Their
$\sigma$-invariance is a well-known property: see, e.g., \cite[Proposition 3.4.1]{viana}.
To check that they are $\sigma$-ergodic we take a $\sigma$-invariant subset $\mK\subseteq\W$
and observe that: a point $(\theta,\phi,s)$ with $s\in[0,1)$ belongs to $\mK$ if and only if
$(\theta,\phi,0)=\sigma(-s,(\theta,\phi,s))\in\mK$; hence, if
$\tilde\mK:=\{(\theta,\phi)\in\Lambda\,|\;(\theta,\phi,0)\in\mK\}$, we
have $m^\pm(\mK)=\tilde m^\pm(\tilde\mK)$; and $\tilde\mK$ is
$\varsigma$-invariant, which means that $\tilde m^\pm(\tilde\mK)$ is $0$ or $1$.
Conversely, given a $\sigma$-ergodic measure $m$ on $\W$, the
expression $\tilde m(\mK):=m((\mK\times[0,1])/\sim)$ for the Borel subsets of $\Lambda$
defines a $\varsigma$-invariant measure on $\Lambda$ (see, e.g., \cite[Proposition 3.4.3]{viana})
which is clearly also $\varsigma$-ergodic. It only remains to prove that
two different $\sigma$-ergodic measures $m_1$ and $m_2$
define two different $\varsigma$-ergodic measures $\tilde m_1$ and $\tilde m_2$.
We assume for contradiction that $\tilde m_1=\tilde m_2$, deduce that
$m_1(\mB\times[p,r])=m_2(\mB\times[p,r])$ for every $p,r\in\Q\cap[0,1)$ and every Borel
subset $\mB$ of $\Lambda$ (see the proof of \cite[Lemma 3.4.2]{viana}),
and conclude that $m_1=m_2$, as they coincide on a generator class of the Borel
sigma-algebra.
\begin{nota}
As already mentioned, we have chosen this particular example
to perform a numerical simulation.
Interested readers can found in \cite{jnot} a survey on the occurrence of this type
of minimal flows supporting two ergodic measures, that are closely related to the
(unfrequent) appearance of non-uniformly hyperbolic two-dimensional linear dynamics and to the
so-called Strange Nonchaotic Attractors.
\end{nota}

Once described the base $(\W,\sigma)$, we will define the map $\mathfrak{a}_1\colon\W\to\R$
giving rise to our example of equation \eqref{eq:6cubic}.
First, given $(\theta,\phi)\in\Lambda$, we define
$f_{\theta,\phi}\colon[0,1]\to\R$ as the linear interpolator taking values
$f_{\theta,\phi}(0)=\log\big((\partial g/\partial\phi)(\theta,\phi)\big)$ and
$f_{\theta,\phi}(1)=\log\big((\partial g/\partial\phi)(T(\theta,\phi))\big)$.
Second, we define $\ma_1([\theta,\phi,s]):=f_{\theta,\phi}(s)$, where we choose
a representative with $s\in[0,1)$.
\begin{figure}[ht]
\includegraphics[width=\textwidth]{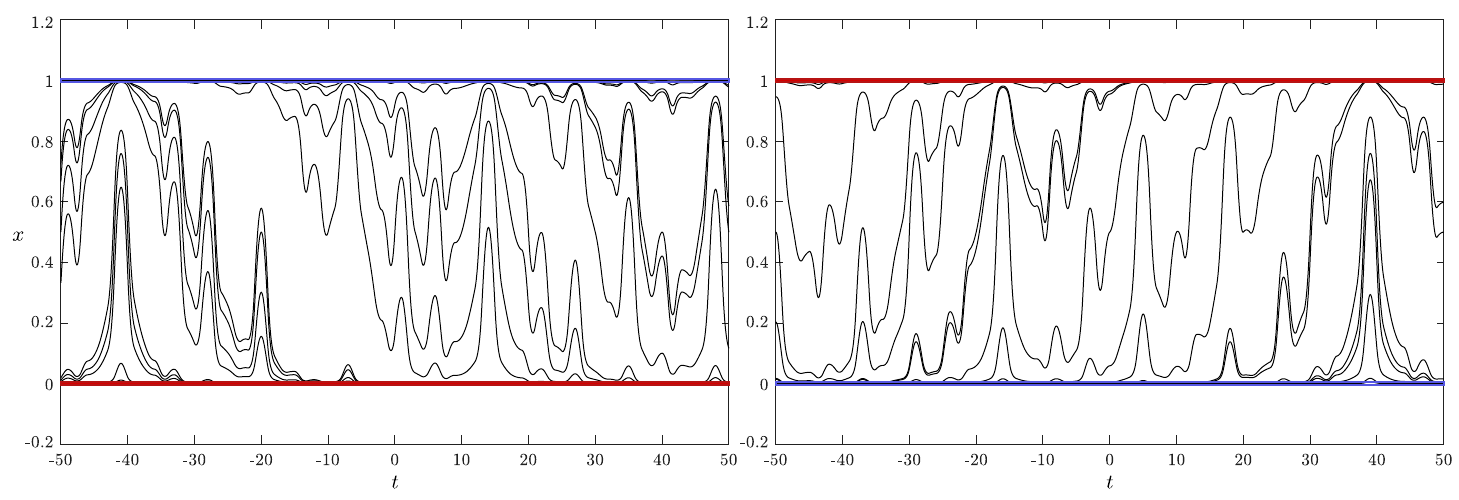}
\caption{Solutions of equation \eqref{eq:6cubic} for $b=1.1$,
$\alpha=(\sqrt{5}-1)/2$, $\lb=1.960148407660$ and $\ma_0=0.05$. On the left, we take
$\w_-:=(\theta^-,\varphi^-_\lb(\theta^-),0)\in\W$ for $\theta^-:=1/2+5200\,\alpha$
and observe that all the solutions converge to the constant solution $0$ as time increases
and to $1$ as time decreases. The map $[0,\infty)\to\R,\;t\mapsto\ma_1((\w_-){\cdot}t)$ is the
continuous piecewise linear map which takes value
$\log((\partial g/\partial\phi)(\theta^-\!+\alpha\,n,\varphi^-_\lb(\theta^-\!+\alpha\,n)))$
at $t=n$ for $n\in\N$. On the right, all the solutions travel from $0$ to $1$, due to
the choice of $\w_+:=(\theta^+,\varphi^+_\lb(\theta^+),0)\in\W$ for $\theta^+:=1/2-5200\,\alpha$,
and $t\mapsto\ma_1((\w_+){\cdot}t)$ interpolates the values
value $\log((\partial g/\partial\phi)(\theta^+\!+\alpha\,n,\varphi^+_\lb(\theta^+\!+\alpha\,n)))$.
The values of $\varphi^-$ (resp.~$\varphi^+$) have been approximated by calculating a single
forward (resp.~backward) $\varsigma$-trajectory departing from $(\theta^0,\phi_0)
=(0.5,0)$ over a long run.}
\label{fig:innerdynamics}
\end{figure}
\begin{figure}[ht]
\includegraphics[width=\textwidth]{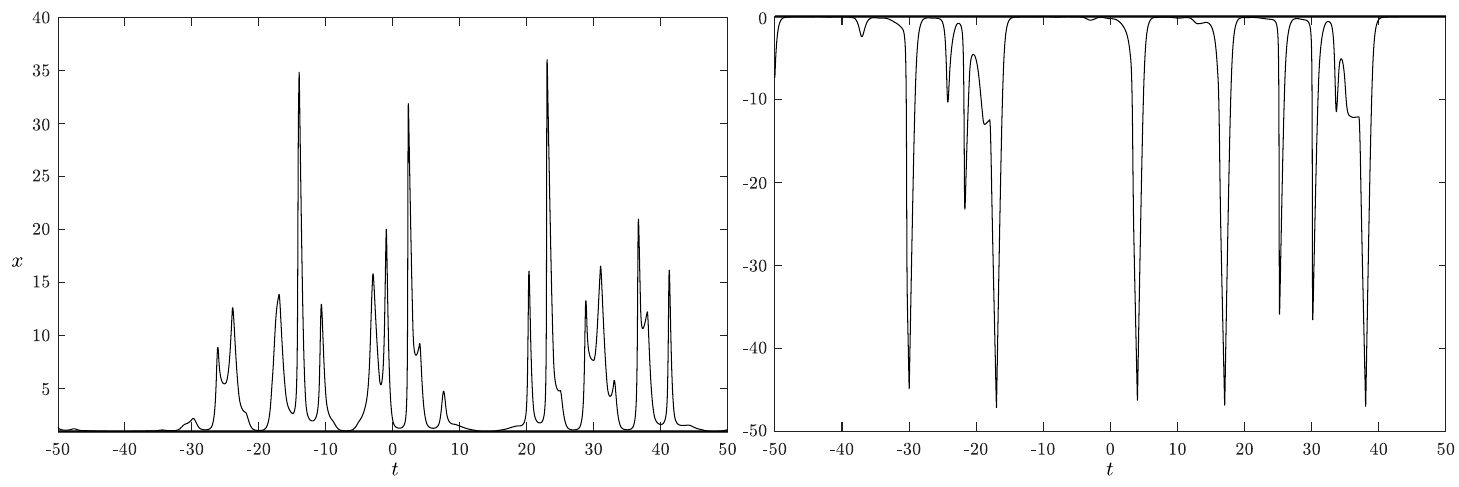}
\caption{Solutions of the same equation \eqref{eq:6cubic} as in Figure
\ref{fig:innerdynamics}.
On the left, a single solution with an initial data above $1$
at large negative initial time, and the same choice of $\w_-$ as
in Figure \ref{fig:innerdynamics}:
this solution approximates the upper bound of the attractor along a $\sigma$-trajectory
contained in the subset where $\muk_0$ differs from $1$.
On the right, a single solution with initial data below $0$ at
large negative initial time, and the same choice of $\w_+$ as in Figure
\ref{fig:innerdynamics}: this solution approximates the lower bound of the attractor along
a $\sigma$-trajectory contained in the subset where $\ml_0$
differs from $1$.}
\label{fig:outerdynamics}
\end{figure}
Since, by construction, $f_{\theta,\phi}(1)=f_{T(\theta,\phi)}(0)$, the map $\ma_1$ is
well-defined and continuous. In addition, for each $s\in[0,1]$,
\[
\begin{split}
 &\lim_{N\to\infty}\frac{1}{N}\sum_{n=0}^{N-1}
 \ma_1\!\circ\!i_s(T^n(\theta,\phi))
 =\lim_{N\to\infty}\frac{1}{N}\sum_{n=0}^{N-1}f_{T^n(\theta,\phi)}(s)\\
 &\qquad=\lim_{N\to\infty}\frac{1}{N}\sum_{n=0}^{N-1}
 \left((1-s)\log\left(\frac{\partial g}{\partial\phi}\big(T^n(\theta,\phi)\big)\right)
 +s\log\left(\frac{\partial g}{\partial\phi}\big(T^{n+1}(\theta,\phi)\big)\right)\!\right)\\
 &\qquad=\lim_{N\to\infty}\frac{1}{N}\sum_{n=0}^{N-1}
 \log\left(\frac{\partial g}{\partial\phi}\big(T^n(\theta,\phi)\big)\right)
\end{split}
\]
for each $(\theta,\phi)\in\To\times\pr$.
So, it follows from Birkhoff's Ergodic Theorem that
$\int_{\To\times\pr}\ma_1\!\circ\!i_s(\theta,\phi)\,d\tilde m^\pm=
\int_{\To\times\pr}\log\big((\partial g/\partial\phi)(\theta,\phi)\big)\,d\tilde m^\pm$
for all $s\in[0,1]$, and hence \eqref{def:6beta} and the definition of the measures $m^\pm$ yield
\begin{equation}\label{eq:5beta-2}
 \pm\int_\W\ma_1(\w)\,dm^\pm=\beta(\lb)\ne 0\,.
\end{equation}
This means that $\ma_1$ fits in the framework described at the beginning of Section \ref{sec:6}.
In addition, it can be numerically
approximated thanks to the definition of $f_{\theta,\phi}$ for $(\theta,\phi)$ in the minimal
set $\Lambda$.

Let us now represent some solutions of a particular equation
\eqref{eq:6cubic} that illustrate the dynamics.
Following \cite{jnot}, we take $b = 1.1$ and $\alpha = (\sqrt{5}-1)/2$ and
the value $\lb = 1.960148407660$ as a suitable approximation to the lowest
point of the spectrum: it is reasonably expected that the exact value
lies between the chosen one and $1.960148407661$.
Our choices determine the minimal set $\W$ and the continuous map
$\ma_1\colon\W\to\R$. We take $\ma_0=0.05$
and use \eqref{def:6beta} and \eqref{eq:5beta-2} to verify
numerically the condition \eqref{eq:checkcondition}.
The numerical evidence of \cite{jnot} also suggests
that the corresponding minimal set $\Lambda$ is contained in $\To\times(0,\pi/2)$,
as it must be if $\lb$ is indeed the lowest point of the spectrum.
Figure \ref{fig:innerdynamics} depicts several solutions of the constructed equation
\eqref{eq:6cubic} starting at the invariant set $\R\times[0,1]$
for two different points $\w_-$ and $\w_+$ of $\W$: the solutions
behave as those described in Theorem \ref{th:5banda} for $\W_-$ and
$\W_+$, respectively. And Figure \ref{fig:outerdynamics} completes the numerical analysis,
showing the behavior of solutions on the (also invariant) complement of $\R\times[0,1]$.
\subsection{An example of critical transition in case \ref{p2}}\label{subsec:6critical}
Now, we can consider and simulate a critical transition of jump-type in the previously
described model. Let $(\W,\sigma)$ be the flow described in Section \ref{subsec:harper}.
We can understand the equation
\begin{equation}
\label{eq:criticaltransition}
 x'=-x\,(x-1)\big(\ma_0\,x+\ma_1(\wt)\big)+\Gamma(t)\,,
\end{equation}
\eqref{eq:6cubic}
as the model for the evolution of a population subject to an Allee effect (due to some cooperative
effect in breeding, foraging, defense, etc.),
and influenced by a small net migration $\Gamma(t)$.
The maps $\ma_0$ and $\ma_1$ and the point $\w:
=\w_-$ are chosen as in Figure \ref{fig:innerdynamics}, and $\Gamma(t):=-\arctan(t/5)/5\pi$.

Since $\lim_{t\to-\infty}\Gamma(t)=0.1$ and $\W$ is minimal,
we can consider the following past equation of \eqref{eq:criticaltransition}:
\begin{equation}
\label{eq:criticaltransition_past}
x'=-x\,(x-1)\big(\ma_0\,x+\ma_1(\wt)\big)+0.1\,,
\end{equation}
whose upper bounded solution can be numerically checked to be
qualitatively similar to that depicted in the left picture of
Figure \ref{fig:outerdynamics}.
The arguments in Section \ref{subsec:minimallbbifurcation} ensure that the global attractor
of \eqref{eq:criticaltransition_past} is an attractive hyperbolic copy $\{\mb_-\}$ of $\W$,
and therefore there is a unique bounded solution $t\mapsto\mb_-(\wt)$ of
\eqref{eq:criticaltransition_past}. We can reason as in the proof of
\cite[Theorem 3.7]{dno3} to deduce that \eqref{eq:criticaltransition} admits exactly a
bounded solution $b(t)$ which, in addition, satisfies $\lim_{t\to-\infty}(b(t)-\mb_-(\wt))=0$.

In consonance with the established jump bifurcation at $\lb=0$ for the family
\eqref{eq:familyjump}, the dynamics of \eqref{eq:criticaltransition} suffers an abrupt change
when $\G$ crosses the value $0$, triggering a critical transition. To get a nice numerical
depiction of this phenomenon, we will not represent $x$ as state variable, but
\[
 y=\frac{\arctan(3\,(x-1/2)/4)}{\arctan(3/8)}\,.
\]
In Figure~\ref{fig:minimaltransition}, we illustrate the transformed by this change of
dependent variable of the solution $b$. The abrupt change of dynamics when the parameter
shift $\G$ crosses the bifurcation threshold $\lb=0$ is notable. The strongly oscillatory
nature of the “stationary” states observed both before and after the transition arises from the
nonuniqueness of ergodic measures on $\W$. From an ecological perspective, this regime shift can
be interpreted as a critical transition, since negative values of $b$ may be understood as population
extinction.
\begin{figure}[ht]
\includegraphics[width=0.8\textwidth]{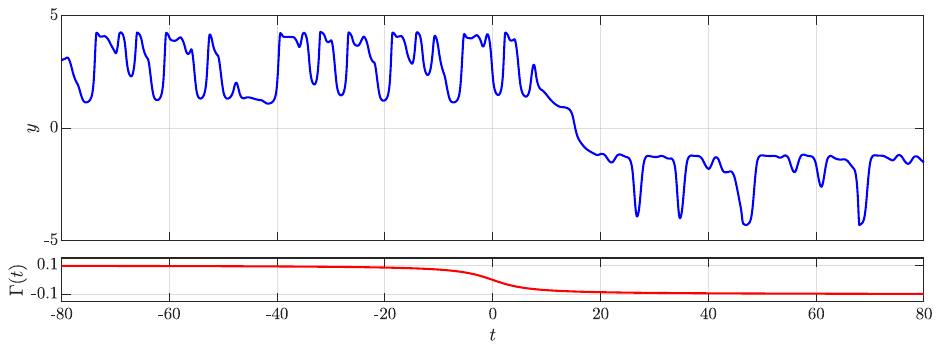}
\caption{On the lower panel, in red, the representation of the function $\G$
defined after \eqref{eq:criticaltransition}.
On the upper panel, in blue, the numerical approximation to the unique bounded
solution of \eqref{eq:criticaltransition}.
To this end, we have integrated from an initial value larger than 1 at a very negative time.
The dynamics of the solution drastically changes after $\G$ changes sign,
as expected from the analysis in the previous sections.
}
\label{fig:minimaltransition}
\end{figure}


\begin{thebibliography}{99}
\bibitem{alk2018} H.~Alkhayuon, P.~Ashwin. Rate-induced tipping from periodic attractors: Partial tipping and connecting orbits. \emph{Chaos} {\bf 28}, 033608 (2018).
\bibitem{AlkhayuonEtAl2019} H.~Alkhayuon, P.~Ashwin, L.C.~Jackson, C.~Quinn, R.A.~Wood. Basin bifurcations, oscillatory instability and rate-induced thresholds for Atlantic meridional overturning circulation in a global oceanic box model. \emph{Trans. R. Soc. A.} {\bf 475}, 20190051 (2019).
\bibitem{arnold} L.~Arnold. \emph{Random Dynamical Systems}. Springer, New York, 1998.
\bibitem{asne} P.~Ashwin, J.~Newman. Physical invariant measures and tipping probabilities for chaotic attractors of asymptotically autonomous systems. The European Physical Journal Special Topics. {\em Phys. J. Spec. Top.} {\bf 230}, 3235--3248 (2021).
\bibitem{apw2017} P.~Ashwin, C.~Perryman, S.~Wieczorek. Parameter shifts for nonautonomous systems in low dimension: bifurcation- and rate-induced tipping. \emph{Nonlinearity} {\bf 30} (6), 2185--2210 (2017).
\bibitem{awvc} P.~Ashwin, S.~Wieczorek, R.~Vitolo, P.~Cox. Tipping points in open systems: bifurcation, noise-induced and rate-dependent examples in the climate system. \emph{Phil. Trans. R. Soc. A} {\bf 370} (1962), 1166--1184 (2012).
\bibitem{tenmartini} A.~Avila, S.~Jitomirskaya. The Ten Martini Problem. \emph{Annals of Mathematics} {\bf 170}, 303--342 (2009).
\bibitem{BaltanasEtAl2003} J.P.~Baltan\'{a}s, L.~L\'{o}pez, I.I.~Blechman, P.S.~Landa, A.~Zaikin, J.~Kurths, M.A.F.~Sanju\'{a}n. Experimental evidence, numerics, and theory of vibrational resonance in bistable systems. \emph{Phys. Rev. E} {\bf 67}, 066119 (2003).
\bibitem{BerglundGentz2002} N.~Berglund, B.~Gentz. Metastability in simple climate models: pathwise analysis of slowly driven Langevin equations. \emph{Stoch. Dyn.} {\bf 2} (3), 327--356 (2002).
\bibitem{cano} J.~Campos, C.~N\'{u}\~{n}ez, R.~Obaya. Uniform stability and chaotic dynamics in nonhomogeneous linear dissipative scalar ordinary differential equations. \emph{J. Differential Equations} {\bf 361}, 248--287 (2023).
\bibitem{Cessi1994} P.~Cessi. A simple box model of stochastically forced thermohaline flow. \emph{J. Phys. Oceanogr.} {\bf 24}, 1911--1920 (1994).
\bibitem{courchamp08} F.~Courchamp, L.~Berec, J.~Gascoigne. \emph{Allee effects in ecology and conservation}. Oxford University Press, New York, 2008.
\bibitem{duen} J.~Due\~{n}as. D-concave nonautonomous differential equations and applications to critical transitions. PhD thesis, Universidad de Valladolid, 2024.
\bibitem{dlo1} J.~Due\~{n}as, I.P.~Longo, R.~Obaya. Rate-induced tracking for concave or d-concave transitions in a time-dependent environment with application in ecology. \emph{Chaos} {\bf 33} (12), 123113 (2023).
\bibitem{dno1} J.~Due\~{n}as, C.~N\'{u}\~{n}ez, R.~Obaya. Bifurcation theory of attractors and minimal sets in d-concave nonautonomous scalar ordinary differential equations. \emph{J. Differential Equations} {\bf 361}, 138--182 (2023).
\bibitem{dno3} J.~Due\~{n}as, C.~N\'{u}\~{n}ez, R.~Obaya. Critical transitions in d-concave nonautonomous scalar ordinary differential equations appearing in population dynamics. \emph{SIAM J. Appl. Dyn. Syst.} {\bf 22} (4), 2649--2692 (2023).
\bibitem{dno2} J.~Due\~{n}as, C.~N\'{u}\~{n}ez, R.~Obaya. Generalized pitchfork bifurcations in d-concave nonautonomous scalar ordinary differential equations. \emph{J. Dynam. Differential Equations} {\bf 36}, 3125--3157 (2024).
\bibitem{dno4} J.~Due\~{n}as, C.~N\'{u}\~{n}ez, R.~Obaya. Critical transitions for asymptotically concave or d-concave nonautonomous differential equations with applications in ecology. \emph{J. Nonlinear Sci.} {\bf 34}, 105 (2024).
\bibitem{dno5} J.~Due\~{n}as, C.~N\'{u}\~{n}ez, R.~Obaya. Saddle–node bifurcations for concave in measure and d-concave in measure skewproduct flows with applications to population dynamics and circuits. \emph{Commun. Nonlinear Sci. Numer. Simul.} {\bf 142}, 108577 (2025).
\bibitem{dno6} J.~Due\~{n}as, C.~N\'{u}\~{n}ez, R.~Obaya. Concave–convex nonautonomous scalar ordinary differential equations: from bifurcation theory to critical transitions. \emph{Nonlinearity} {\bf 38}, 095030 (2025).
\bibitem{feni} N.~Fenichel. Geometric singular perturbation theory for ordinary differential equations. \emph{J. Differential Equations} {\bf 31}, 53--98 (1979).
\bibitem{fmrh} R.~Fern\'{a}ndez-Mart\'{\i}nez, A.~Ruiz-Herrera. Mosquito population suppression models with seasonality and d-concave equations. \emph{J. Dynam. Differential Equations} (2026), 10.1007/s10884-025-10481-z.
\bibitem{gohe} W.H.~Gottschalk, G.A.~Hedlund. Topological Dynamics. \emph{Amer. Math. Soc. Colloq. Publ.} {\bf 36}, Amer. Math. Soc., Providence, 1955.
\bibitem{HankelTziperman2023} C.~Hankel, E.~Tziperman. An approach for projecting the timing of abrupt winter Arctic sea ice loss. \emph{Nonlin. Processes Geophys.} {\bf 30}, 299--309 (2023).
\bibitem{haropuig} \`{A}.~Haro, J.~Puig. Strange nonchaotic attractors in Harper maps. \emph{Chaos} {\bf 16}, 033127 (2006).
\bibitem{Hill1936} A.V.~Hill. Excitation and accommodation in nerve. \emph{Proc. R. Soc. Lond. B Biol. Sci.} {\bf 119} (814), 305--355 (1936).
\bibitem{HohlEtAl1995} A.~Hohl, H.J.C.~van der Linden, R.~Roy, G.~Goldsztein, F.~Broner, S.H.~Strogatz. Scaling laws for dynamical hysteresis in a multidimensional laser system. \emph{Phys. Rev. Lett.} {\bf 74}, 2220--2223 (1995).
\bibitem{jager1} T.H.~J\"{a}ger. Quasiperiodically forced interval maps with negative Schwarzian derivative. \emph{Nonlinearity} {\bf 16} (4), 1239--1255 (2003).
\bibitem{jonnf} R.~Johnson, R.~Obaya, S.~Novo, C.~N\'{u}\~{n}ez, R.~Fabbri. \emph{Nonautonomous Linear Hamiltonian Systems: Oscillation, Spectral Theory and Control}. Developments in Mathematics {\bf 36}, Springer, 2016.
\bibitem{jnot} \`{A}.~Jorba, C.~N\'{u}\~{n}ez, R.~Obaya, J.C.~Tatjer. Old and new results on Strange Nonchaotic Attractors. \emph{Int. J. Bifurcation Chaos} {\bf 17} (11), 3895--3928 (2007).
\bibitem{JungGrayRoy1990} P.~Jung, G.~Gray, R.~Roy. Scaling law for dynamical hysteresis. \emph{Phys. Rev. Lett.} {\bf 65}, 1873--1876 (1990).
\bibitem{lno2} I.P.~Longo, C.~N\'{u}\~{n}ez, R.~Obaya. Critical transitions in piecewise uniformly continuous concave quadratic ordinary differential equations. \emph{J. Dynam. Differential Equations} {\bf 36} (3), 2153--2192 (2024).
\bibitem{lnor} I.P.~Longo, C.~N\'{u}\~{n}ez, R.~Obaya, M.~Rasmussen. Rate-induced tipping and saddle-node bifurcation for quadratic differential equations with nonautonomous asymptotic dynamics. \emph{SIAM J. Appl. Dyn. Syst.} {\bf 20} (1), 500--540 (2021).
\bibitem{loos} I.P.~Longo, R.~Obaya, A.M.~Sanz. Tracking nonautonomous attractors in singularly perturbed systems of ODEs with dependence on the fast time. \emph{J. Differential Equations} {\bf 414}, 609--644 (2025).
\bibitem{LopesAmorGore2024} W.~Lopes, D.R.~Amor, J.~Gore. Cooperative growth in microbial communities is a driver of multistability. \emph{Nature Communications} {\bf 15}, 4709 (2024).
\bibitem{LuchinskyEtAl1999} D.G.~Luchinsky, R.~Mannella, P.V.E.~McClintock, N.G.~Stocks. Stochastic resonance in electrical circuits---I: Conventional stochastic resonance. \emph{IEEE Trans. Circuits Syst. II Analog Digit. Signal Process.} {\bf 46}, 1205--1214 (1999).
\bibitem{maslov2021} A.G.~Maslovskaya, L.I.~Moroz, A.Yu.~Chebotarev, A.E.~Kovtanyuk. Theoretical and numerical analysis of the Landau–Khalatnikov model of ferroelectric hysteresis. \emph{Commun. Nonlinear Sci. Numer. Simul.} {\bf 93}, 105524 (2021).
\bibitem{MayLevinSugihara2008} R.M.~May, S.A.~Levin, G.~Sugihara. Complex systems: Ecology for bankers. \emph{Nature} {\bf 451}, 893--895 (2008).
\bibitem{milon} V.M.~Million\v{s}\v{c}ikov. A proof of the existence of irregular systems of linear equations with quasiperiodic coefficients. \emph{Differ. Uravn.} {\bf 5}, 1979--1983 (1969).
\bibitem{NeneZaikin2010} N.R.~Nene, A.~Zaikin. Gene regulatory network attractor selection and cell fate decision: Insights into cancer multi-targeting. In \emph{Proceedings of Biosignal 2010}, 14--16 (2010).
\bibitem{nono} S.~Novo, C.~N\'{u}\~{n}ez, R.~Obaya. Almost automorphic and almost periodic dynamics for quasimonotone non-autonomous functional differential equations. \emph{J. Dynam. Differential Equations} {\bf 17} (3), 589--619 (2005).
\bibitem{SchefferEtAl2009} M.~Scheffer, J.~Bascompte, W.A.~Brock, V.~Brovkin, S.R.~Carpenter, V.~Dakos, H.~Held, E.H.~van Nes, M.~Rietkerk, G.~Sugihara. Early-warning signals for critical transitions. \emph{Nature} {\bf 461}, 53--59 (2009).
\bibitem{Schellnhuber2009} H.J.~Schellnhuber. Tipping elements in the Earth System. \emph{Proc. Natl. Acad. Sci. U.S.A.} {\bf 106}, 20561--20563 (2009).
\bibitem{sell2} G.R.~Sell. \emph{Topological Dynamics and Ordinary Differential Equations}. Van Nostrand Reinhold, London, 1971.
\bibitem{shyi4} W.~Shen, Y.~Yi. Almost Automorphic and Almost Periodic Dynamics in Skew-Product Semiflows. \emph{Mem. Amer. Math. Soc.} {\bf 647}, Amer. Math. Soc., Providence, 1998.
\bibitem{tikh} A.N.~Tikhonov. Systems of differential equations containing a small parameter multiplying the derivative. \emph{Mat. Sb.} {\bf 31}, 575--586 (1952).
\bibitem{VanselowEtAl2022} A.~Vanselow, L.~Halekotte, U.~Feudel. Evolutionary rescue can prevent rate-induced tipping. \emph{Theor. Ecol.} {\bf 15}, 29--50 (2022).
\bibitem{viana} M.~Viana, K.~Oliveira. \emph{Foundations of Ergodic Theory}. Cambridge Studies in Advanced Mathematics {\bf 151}, Cambridge, 2016.
\bibitem{vinograd} R.E.~Vinograd. On a problem of N.P.~Erugin. \emph{Differ. Uravn.} {\bf 11}, 632--638 (1975).
\bibitem{WackerSchluter2021} B.~Wacker, J.C.~Schl\"{u}ter. A cubic nonlinear population growth model for single species: Theory, an explicit--implicit solution algorithm and applications. \emph{Adv. Difference Equ.} {\bf 2021}, 236 (2021).
\bibitem{WangChenZhengYu2024} Y.~Wang, Y.~Chen, B.~Zheng, J.~Yu. Periodic dynamics of a mosquito population suppression model based on Wolbachia-infected males. \emph{Discrete Contin. Dyn. Syst.} {\bf 44} (8), 2403--2437 (2024).
\bibitem{WieczorekXieAshwin2023} S.~Wieczorek, C.~Xie, P.~Ashwin. Rate-induced tipping: Thresholds, edge states and connecting orbits. \emph{Nonlinearity} {\bf 36}, 3238--3293 (2023).
\bibitem{YukalovEtAl2009} V.I.~Yukalov, D.~Sornette, E.P.~Yukalova. Nonlinear dynamical model of regime switching between conventions and business cycles. \emph{J. Econ. Behav. Organ.} {\bf 70}, 206--230 (2009).
\end{thebibliography}
\end{document}